\documentclass[11pt]{article}
\usepackage{amsmath, amsthm, amssymb, epsfig, sectsty, graphicx, float, subfig, url,tabularx}
\usepackage{algorithm, algorithmicx, algpseudocode}
\usepackage[hypertexnames=false,hyperfootnotes=false]{hyperref}
\usepackage[hmargin=1in,vmargin=1in]{geometry}
\usepackage{blindtext}
\usepackage{graphicx}

% THEOREM-LIKE ENV %%%%%%%%%%%%%%%%%%%%%%%%%%%%%%%%%%%%%%%%%%%%%%%%%
\theoremstyle{plain}
\newtheorem{theorem}{Theorem}[section]
\newtheorem{lemma}[theorem]{Lemma}

\theoremstyle{definition}
\newtheorem{definition}[theorem]{Definition}

\newtheorem{prop}[theorem]{Propostion}

\makeatletter \renewenvironment{proof}[1][\proofname]{\par\pushQED{\qed}\normalfont\topsep6\p@\@plus6\p@\relax\trivlist\item[\hskip\labelsep\bfseries#1\@addpunct{.}]\ignorespaces}{\popQED\endtrivlist\@endpefalse} \makeatother

\floatstyle{ruled}
 % hyperlinked email address

% MORE ENV %%%%%%%%%%%%%%%%%%%%%%%%%%%%%%%%%%%%%%%%%%%%%%%%%%%%%%%%

%\renewenvironment{proof}{\begin{trivlist} \item[\hspace{\labelsep}{\bf \noindent Proof.\/}] }{\qedsymb\end{trivlist}}%
%

% MATH %%%%%%%%%%%%%%%%%%%%%%%%%%%%%%%%%%%%%%%%%%%%%%%%%%%%%%%%%%%%

% MORE SETTINGS %%%%%%%%%%%%%%%%%%%%%%%%%%%%%%%%%%%%%%%%%%%%%%%%%%%
\sectionfont{\large} \subsectionfont{\normalsize}%
\numberwithin{equation}{section}%

% REVIEW %%%%%%%%%%%%%%%%%%%%%%%%%%%%%%%%%%%%%%%%%%%%%%%%%%%%%%%%%%
\newcommand{\MySkip}[1]{}
 %

% SPECIAL WORDS %%%%%%%%%%%%%%%%%%%%%%%%%%%%%%%%%%%%%%%%%%%%%%%%%%%%

\mathchardef\mhyphen="2D
\usepackage{color}

\numberwithin{equation}{section}%
%\numberwithin{theorem}{section}

\newcommand{\mb}[1]{\ensuremath{\boldsymbol{#1}}}

\mathchardef\mhyphen="2D
\usepackage{footnote}
\makesavenoteenv{tabular}
\makesavenoteenv{table}

\newcommand*{\red}{\textcolor{black}}

\title{ A Tractable Approach for designing Piecewise Affine Policies in Two-stage Adjustable Robust Optimization}

\author{Aharon Ben-Tal  \thanks{Industrial Engineering and Management, Technion - Israel Institute of Technology and CentER, Tilburg University, Netherlands Email: \texttt{abental@ie.technion.ac.il}.} \and Omar El Housni$\;^{\dagger}$ \and Vineet Goyal \thanks{Industrial Engineering and Operations Research, Columbia University, Email: \texttt{\{oe2148,vg2277\}@columbia.edu}. }}

%\author{Aharon Ben-Tal\and Omar El Housni \and Vineet Goyal}

\begin{document}
\maketitle
\begin{abstract}
We consider the problem of designing piecewise affine policies for
two-stage adjustable robust linear optimization problems under right-hand side
uncertainty. It is well known that a piecewise affine policy is optimal although the number of pieces can be
exponentially large. A significant challenge in designing a practical piecewise affine policy is constructing good pieces of the uncertainty set. Here we address this challenge by introducing a new framework in which the uncertainty set is ``approximated'' by a ``dominating'' simplex. The corresponding policy is then based on a mapping from the uncertainty set to the simplex. \red{Although our piecewise affine policy has exponentially many pieces, it can be computed efficiently by solving a compact linear program given the dominating simplex. Furthermore, we can find the dominating simplex in a closed form if the uncertainty set satisfies some symmetries and can be computed using a MIP in general. The performance of our policy is significantly better than the affine policy for many important uncertainty sets, such as ellipsoids and norm-balls, both theoretically and numerically.} For instance, for hypersphere uncertainty set, our piecewise affine policy can be computed by an LP and gives a $O(m^{1/4})$-approximation whereas the affine policy requires us to solve a second order cone program and has a worst-case performance bound of $O(\sqrt m)$.

%\keywords{Robust Optimization; Approximation Algorithms; Affine Policy; Linear Decision Rules}
%\PACS{Primary: Robust Optimization/ Stochastic Optimization; Secondary: Analysis of Algorithms}
%\subclass{Primary: 90C47, 90C59}
\end{abstract}

\section{Introduction}
%{\color{red}{Add references for segregated decision rules, threshold policies, primal-dual decision rules and Bertsimas' paper on Multistage Robust Mixed Integer Optimization with Adaptive Partitions}}

Addressing uncertainty in problem parameters in an optimization problem is a fundamental challenge in most real world problems where decisions often need to be made in the face of uncertainty. Stochastic and robust optimization are two approaches that have been studied extensively to handle uncertainty. In a stochastic optimization framework, uncertainty is modeled using a probability distribution and the goal is to optimize an expected objective~\cite{Dantzig55}. We refer the reader to Kall and Wallace~\cite{KW94}, Prekopa~\cite{Prekopa95}, Shapiro~\cite{Shapiro08}, Shapiro et al.~\cite{SDR09} for a detailed discussion on stochastic optimization. While it is a reasonable approach in certain settings, it is intractable in general and suffers from the ``curse of dimensionality''. Moreover, in many applications, we may not have sufficient historical data to estimate a joint probability distribution over the uncertain parameters.

Robust optimization is another paradigm where we consider an adversarial model of uncertainty using an uncertainty set and the goal is to optimize over the worst-case realization from the uncertainty set. This approach was first introduced by Soyster~\cite{SA73} and has been extensively studied in recent past. We refer the reader to Ben-Tal and Nemirovski~\cite{BN98,BN99,Ben-Tal02}, El Ghaoui and Lebret~\cite{EL97}, Bertsimas and Sim~\cite{BS03,BS04}, Goldfarb and Iyengar~\cite{GI03}, Bertsimas et al.~\cite{BBC08} and Ben-Tal et al.~\cite{BNE10} for a detailed discussion of robust optimization. Robust optimization leads to a tractable approach where an optimal static solution can be computed efficiently for a large class of problems. \red{Moreover, in many cases, designing an uncertainty set is significantly less challenging than estimating a joint probability distribution for high-dimensional uncertainty.} However, computing an optimal adjustable (or dynamic) solution for a multi-stage problem is generally hard even in the robust optimization framework.% In fact, Awasthi et al.~\cite{AGL15} show that it is hard to approximate the two-stage adjustable robust linear optimization problem with packing constraint and uncertain constraint matrix within a factor better than $\Omega(\log n)$.

%Dantzig~\cite{Dantzig55} first introduced stochastic programming in 1950s, in which the input parameters follow underlying distributions. The objective is to optimize the expected value subject to chance constraints.  H
In this paper, we consider  two-stage adjustable robust (AR) linear optimization problems with covering constraints and uncertain right-hand side. In particular, we consider the following model:

\medskip

\noindent
$\Pi_{\sf AR}({\cal U})$:
\begin{equation}\label{eqn:ar}
\begin{aligned}
z_{\sf AR}({\cal U}) = \min \; & \mb{c}^T \mb{x} + \max_{\mb{h}\in {\cal U}} \min_{\mb{y}(\mb{h})} \mb{d}^T \mb{y}(\mb{h}) \\
&  \mb{A}\mb{x} + \mb{B}\mb{y}(\mb{h}) \; \geq \; \mb{h}   \; \; \; \forall \mb h \in {\cal U}  \\
 & \mb{x} \; \in \; {\mathbb R}^{n_1}_+\\
 & \mb{y}(\mb{h})  \; \in \; {\mathbb R}^{n_2}_+,
\end{aligned}
\end{equation}
where $\mb{A} \in {\mathbb R}_+^{m {\times} n_1}, \mb{c}\in {\mathbb R}^{n_1}_+, \mb{d}\in {\mathbb R}^{n_2}_+, \mb{B} \in {\mathbb R}^{m {\times} n_2}$, and ${\cal U}\subseteq{\mathbb R}^m_+$ is the uncertainty set. The goal in this problem is to select the first-stage decision $\mb{x}$, and the second-stage recourse decision,  $\mb y(\mb{h})$, as a function of the uncertain right hand side realization, $\mb{h}$ such that the worst-case cost over all realizations of $\mb h \in {\cal U}$ is minimized. We assume without loss of generality that $n_1 = n_2 = n$ and that the uncertainty set ${\cal U}$ satisfies the following assumption.

\vspace{2mm}
\noindent {\bf Assumption 1}. ${\cal U}\subseteq [0,1]^m$  is convex, full-dimensional with $\mb{e}_i \in {\cal U}$ for all $i =1,\ldots,m$, and  {\em down-monotone}, i.e.,  $\mb{h}\in{\cal U}$ and $\mb{0}\leq \mb{h}' \leq\mb{h}$ implies that $\mb{h}' \in{\cal U}$.

\vspace{2mm}
\red{ We would like to emphasize that the above assumption can be made without loss of generality since we can appropriately scale the uncertainty set, and consider a down-monotone completion, without affecting the two-stage problem~\eqref{eqn:ar}.} Note that in the model $\Pi_{\sf AR}({\cal U})$ the objective coefficients $\mb{c}$, $\mb{d}$, the first-stage constraint matrix $\mb{A}$, and the decision variables $\mb{x}, \mb{y}(\mb{h})$ are all non-negative. This is restrictive as compared to general two-stage linear programs but the above model still captures many important applications including set cover, facility location and network design problems under uncertain demand. Here the right-hand side, $\mb h$ models the uncertain demand and the covering constraints capture the requirement of satisfying the uncertain demand.

The worst case scenario of problem \eqref{eqn:ar} occurs on extreme points of ${\cal U}$. Therefore, given an explicit list of the extreme points of the uncertainty set ${\cal U}$, the adjustable robust optimization problem~\eqref{eqn:ar} can be solved efficiently by including the second-stage decisions and the covering constraints only for the extreme points of ${\cal U}$. \red{Some approaches have been developed to generate dynamically the required extreme points, e.g. Zeng and Zhao \cite{zeng2011solving}, Ayoub and Poss \cite{ayoub2016decomposition}.   } However, in general the adjustable robust optimization problem \eqref{eqn:ar} is intractable; for example, when the number of extreme points is large or due to other structural complexities of ${\cal U}$. \red{ In fact, Feige et al.~\cite{FJMM07} show that problem $\Pi_{\sf AR}({\cal U})$ is hard to approximate within any factor that is better than $\Omega(\log m)$, even in the case of budget uncertainty set and $\mb A$,$\mb B$ being $0$-$1$ matrices}. This motivates us to consider approximations for the problem. Static robust and affinely adjustable solution approximations have been studied in the literature for this problem. In a static robust solution, we compute a single optimal solution $(\mb{x},\mb{y})$ that is feasible for all realizations of the uncertain right hand side. Bertsimas et al.~\cite{BGS10}  relate the performance of static solution to the symmetry of the uncertainty set and show that it provides a good approximation to the adjustable problem if the uncertainty is close to being centrally symmetric. \red{However, the performance bound of static solutions can be arbitrarily large for a general convex uncertainty set with the worst case performance being $\Omega (m)$}.
% Ben-Tal and Nemirovski~\cite{BN99} consider constraint uncertainty in packing problem and show that the static robust solution is optimal for the two-stage adjustable robust problem is the uncertainty set is constraint-wise, i.e.,  the choice of each row in the uncertain coefficient matrix is independent of the other rows (a Cartesian product of row uncertainty sets). Soyster~\cite{SA73} considers column-wise uncertainty sets and shows that the static robust solution corresponds to a hypercube uncertainty set and can be solved by a single LP. Bertsimas et al.~\cite{BGL13} give a tight characterization of the optimality of static robust solution, and relate the performance of static robust solution to a measure of non-convexity of a proper transformation of the uncertainty set. Awasthi et al.~\cite{AGL15} show that for column-wise and constraint-wise uncertainty sets, it is hard to approximate a two-stage adjustable robust optimization problem within a factor better than $\Omega(\log n)$, while a static robust solution provides a near-optimal approximation. For right-hand-side uncertainty set,
El Housni and Goyal \cite{elhousni2015piecewise} consider piecewise static policies for two-stage adjustable robust problem with uncertain constraint coefficients. These are a generalization of static policies where the uncertainty set is divided into several pieces and a static solution specified for each piece. However, they show that, in general, there is no piecewise static policy with a polynomial number of pieces that has a significantly better performance than an optimal static policy.

Ben-Tal et al.~\cite{Ben-Tal04} introduce an affine adjustable solution (also known as affine policy) to approximate adjustable robust problems. Affine policy restricts the second-stage decisions, $\mb{y}(\mb{h})$ to being an affine function of the uncertain right-hand side $\mb{h}$, i.e.,  $\mb{y}(\mb{h})=\mb{P}\mb{h}+\mb{q}$ for some $\mb{P}\in{\mathbb R}^{n\times m}$ and $\mb{q}\in{\mathbb R}^m$, which are decision variables on top of $\mb x \in {\mathbb R}_+^n$. An optimal affine policy can be computed efficiently for a large class of problems and has a strong empirical performance. For a class of multistage problems where there is a single uncertain parameter in each period, Bertsimas et al.~\cite{BIP09} and Iancu et al.~\cite{ISS13} show that affine policies are optimal. Bertsimas and Goyal~\cite{BG10} show that affine policies are optimal if the uncertainty set ${\cal U}$ is a simplex. They prove a worst case bound of $O(\sqrt{m})$  on the performance of affine policy for general uncertainty sets. Moreover, they show that this bound is tight for an uncertainty set quite analogous to the intersection of the unit $\ell_2$-norm ball and the non-negative orthant, i.e.,

\red{\begin{equation} \label{def:sphere}
{\cal U}=\{\mb{h}\in{\mathbb R}^m_+\;|\;||\mb{h}||_2\leq 1\}.
\end{equation}}
Bertsimas and Bidkhori~\cite{BB15} provide improved approximation bounds for affine policies for~$\Pi_{\sf AR}({\cal U})$ that depend on the geometric properties of the uncertainty set. \red{More general decision rules have been considered in the literature and tested numerically;
extended affine decision rules (Chen et al. \cite{chen2008linear}), binary decision rules (Bertsimas and Georghiou \cite{bertsimas2015design}) and adjustable solutions via iterative splitting of uncertainty sets, (Postek and Den Hertog \cite{postek2016multistage}). More recently, Bertsimas and Dunning~\cite{BD15} give an MIP-based algorithm to adaptively partition the uncertainty set.  However, no theoretical guarantees on the performance, or the number of partitions, are known.
}

Piecewise affine policies (PAP) have been studied earlier. In a PAP, we consider pieces ${\cal U}_i, i\in[k]$ of ${\cal U}$ such that ${\cal U}_i \subseteq {\cal U}$ and ${\cal U}$ is covered by the union of all pieces. For each ${\cal U}_i$, we have an affine solution $\mb y ( \mb h)$ where $ \mb h \in {\cal U}_i$. PAP are significantly more general than static and affine policies.  For problem  $\Pi_{\sf AR}({\cal U})$, with ${\cal U}$ being a polytope, a PAP is known to be optimal.  However, the number of pieces can be exponentially large.  Moreover, finding the optimal pieces is, in general, an intractable task.  In fact, Bertsimas and Caramanis~\cite{bertsimas2010finite} prove that it is NP-hard to construct the optimal pieces, even for pieceiwse policies with two pieces, for two-stage robust linear programs.  
  %on the power of affine policy that depends on the simplex dilation factor, the translation factor and the symmetry of the uncertainty set ${\cal U}$. They also compute the above geometric properties for several specific uncertainty sets.
%Note that the worst case scenario of problem \eqref{eqn:ar} occurs on extreme points of ${\cal U}$. Hence, given the extreme points of an uncertainty set ${\cal U}$, the adjustable problem is easy to solve by introducing a second-stage decision variable for each extreme point and writing a covering constraint for each extreme point. However, in general the adjustable problem \eqref{eqn:ar} is hard, for example when the number of extreme points is huge or due to other structural complexities of ${\cal U}$. This motivates us to find new policies that significantly improve over static and affine policies.

\subsection{Our Contributions}

Our main contributions in this paper are as follows.

\vspace{2mm}
\noindent {\bf New Framework for Piecewise affine policy}. We present a new framework to efficiently construct a ``good'' piecewise affine policy for the adjustable robust problem $\Pi_{\sf AR}({\cal U})$. As we mentioned earlier, one of the significant challenges in designing a piecewise affine policy arises from the need to construct ``good pieces" of the uncertainty set. We suggest a new approach where instead of  directly finding an explicit partition of $\cal U$, we approximate $\cal U$ with a ``simple'' set $\hat{\cal U}$ satisfying the following two properties:
\begin{enumerate}
\item  the adjustable robust problem~\eqref{eqn:ar} over $\hat {\cal U}$ can be solved efficiently,
\item $\hat{\cal U}$ ``dominates'' ${\cal U}$,  i.e.,  for any $\mb{h}\in{\cal U}$, there exists $\hat{\mb{h}}\in\hat{\cal U}$ such that $\mb{h}\leq\hat{\mb{h}}$.
\end{enumerate}
Using the uncertainty set $\hat{\cal U}$ instead of ${\cal U}$,
the domination property of $\hat{\cal U}$ preserves the feasibility of the
adjustable robust problem. Specifically, we choose $\hat{\cal U}$ to be a simplex dominating ${\cal U}$. Therefore, the adjustable robust problem~\eqref{eqn:ar} over $\hat{\cal U}$ can be solved efficiently since $\hat{\cal U}$ only has $m+1$ extreme points. \red{We construct a piecewise affine mapping between the uncertainty set ${\cal U}$ and the dominating set $\hat{\cal U}$, i.e. we use a piecewise affine function to map each point $\mb h \in {\cal U}$ to a point $\hat{\mb h}$ that dominates $\mb h$. This mapping leads to
our piecewise affine policy which is constructed from an optimal adjustable solution over $\hat{\cal U}$.} We show that the performance of our policy is significantly better than the affine policy for many important uncertainty sets both theoretically and numerically.

We elaborate on the two ingredients of designing our piecewise affine policy below, namely, constructing $\hat{\cal U}$ and the corresponding piecewise map below.

%We show that our piecewise affine policy performs significantly better than affine policy for many important uncertainty sets. Furthermore, our policy can computed efficiently by solving an adjustable problem over the simplex $\hat{\cal U}$ which is an LP; therefore, in many case it is even more efficient than computing an optimal affine policy over ${\cal U}$.   many cases, this is  with a small number of extreme points that is ``close'' to ${\cal U}$. In particular, we construct a simplex $\hat{\cal U}$ that {\em {dominates}} ${\cal U}$, Since the number of extreme points of $\hat{\cal U}$ is small by design, the adjustable robust problem over $\hat{\cal U}$ can be solved efficiently. We then construct the piecewise affine policy by constructing a piecewise affine mapping from ${\cal U}$ to $2 \cdot \hat{\cal U}$. Our piecewise affine solution can be computed more efficiently than an affine solution in many cases and moreover, has an approximation bound significantly  better than the worst case approximation bound  $O(\sqrt{m})$ for the affine policy~\cite{BG10}.
%To construct our piecewise affine solution, we introduce a framework based on {\em dominating} the uncertainty set ${\cal U}$ by a ``simple'' set.

\begin{itemize}

\item[a)] {\bf Constructing a dominating uncertainty set}. Our framework is based on choosing an appropriate {\em dominating simplex } $\hat{\cal U}$ based on the geometric structure of ${\cal U}$. Specifically,  $\hat{\cal U}$ is taken to be a simplex of the following form
$$ \hat{\cal U} = \beta \cdot {\sf conv } \left( \mb e_1,\ldots,\mb e_m, \mb v \right), $$
where $\beta >0$ and $ \mb v \in {\cal U}$ are chosen appropriately so that $\hat{\cal U}$ dominates ${\cal U}$. Solving the adjustable robust problem over $\hat{\cal U}$ gives a feasible solution for problem $\Pi_{\sf AR}({\cal U})$ due to the domination property. Moreover, the optimal adjustable solution over $\hat{\cal U}$ gives a $\beta$-approximation for problem $\Pi_{\sf AR}({\cal U})$, since $\hat{\cal U} = \beta \cdot {\sf conv } \left( \mb e_1,\ldots,\mb e_m, \mb v \right) \subseteq \beta \cdot {\cal U}$. The approximation bound $\beta$  is related to a geometric {\em scaling factor} that represents the Banach-Mazur distance between ${\cal U}$ and $\hat{\cal U}$. We note that $\hat {\cal U}$ does not necessarily contain ${\cal U}$.
\ \\
%We also give an algorithm to construct the dominating set $\hat{\cal U}$ for a general uncertainty set ${\cal U}$. However, the algorithm requires us to solve an MIP which is computationally much harder than the case of permutation invariant sets. \red{Therefore, for general uncertainty sets, our framework is not necessarily computationally more appealing than computing optimal affine policies. However, we would like to note that in practice these MIPs can be solved efficiently.} Moreover, the construction of the dominating set  $\hat{\cal U}$ is independent from the parameters of the adjustable problem and depends only on the uncertainty set, ${\cal U}$. Therefore, it can be pre-computed offline and used to construct the piecewise affine policy for the adjustable problem efficiently.”
%We also show a lower bound example which suggests that considering any other $\hat{\cal U}$  with polynomially many extreme points may not significantly improve over simplex. We emphasize that the dominating set $\hat{\cal U}$ does not  necessarily contain the uncertainty set $\cal U$. It is only required to dominate $\cal U$.

\item[b)] {\bf The piecewise affine mapping}. We employ the following piecewise affine mapping $\mb{ \hat{h}}(\mb h) = \beta \mb v + \left( \mb h - \beta \mb v\right)^+$ that maps any $\mb h \in {\cal U}$ to a dominating point $\hat{\mb h}$ such that $\mb h \leq \hat{\mb h}$.
For any $\mb{h} \in {\cal U}$, $\mb{\hat h}(\mb{h})$ is contained in the down-monotone completion of $2 \cdot \hat{\cal U}$. The piecewise affine policy is based on the above piecewise affine mapping and gives a $2\beta$-approximation for problem $\Pi_{\sf AR}({\cal U})$. In this policy, $ \beta \mb v$ is covered by the static component and  $\left( \mb h - \beta \mb v\right)^+$ is covered by the piecewise linear component of our policy. This is quite analogous to {\em threshold policies} that are widely used in dynamic optimization.  Note that $\hat {\mb h}$ does not necessarily belong to $\hat{\cal U}$ but is contained in the down-monotone completion of $2 \cdot \hat{\cal U}$ and therefore, we get an approximation factor of $2 \beta$ instead of $\beta$. We can construct a set-dependent piecewise affine map between ${\cal U}$ and $\hat{\cal U}$ that allows us to construct a piecewise affine policy with a performance bound of $\beta$. \red{This bound $\beta$ is not affected by the scaling introduced in Assumption 1.}

%We show that using our framework, we can construct a good piecewise affine policy over the set ${\cal U}$ with an approximation bound of at most $2 \beta ({\cal U},\hat{{\cal U}})$.  In fact there exists a piecewise affine mapping between the set $2 \cdot\hat{\cal U}$  and the uncertainty set ${\cal U}$. In particular, by considering the dominating simplex introduced below, the piecewise affine mapping is
\end{itemize}

Given the dominating set, $\hat{\cal U}$, our piecewise affine policy can be computed efficiently;  in fact, it can be computed even more efficiently than an affine solution over ${\cal U}$ in many cases because the adjustable problem over $\hat{\cal U}$ is a simple LP with only $m+1$ constraints while the affine problem over ${\cal U}$  is a general convex program for general convex uncertainty sets.

%and gives a  good approximation for a large class of uncertainty sets.  For instance, for hypersphere uncertainty set, our piecewise affine policy can be computed by an LP and gives a $O(m^{1/4})$-approximation whereas the affine policy requires us to solve a second order cone program and has a worst-case performance bound of $O(\sqrt m)$.

 %optimal piecewise affine solution but we construct efficiently a "good" piecewise affine solution from the adjustable robust solution over $\hat{\cal U}$.

\vspace{3mm}
\noindent {\bf \red{Results for Scaled Permutation Invariant (SPI) Sets}}. \red{The uncertainty set ${\cal U}$ is SPI if ${\cal U} = \mbox{diag } ( \mb \lambda) \cdot {\cal V}$ where $\mb \lambda \in {\sf R}_+^m$ and ${\cal V}$ is an {\it invariant set}, i.e., if $\mb{v} \in {\cal V}$, then any permutation of the components of $\mb{v}$ are also in ${\cal V}$. SPI sets include ellipsoids, weighted norm-balls, intersection of norm-balls with budget uncertainty sets and more.  SPI sets are commonly used in Robust Optimization literature and in practice.}

We show that for SPI uncertainty set ${\cal U}$, it is possible to construct the dominating set $\hat{\cal U}$ and compute the scaling factor $\beta$.  In particular, we give an efficiently computable closed-form expression for $\beta$ and $\mb{v} \in {\cal U}$ that are needed to construct $\hat{\cal U}$.  Consequently, we can efficiently construct our piecewise affine decision rule, having a performance bound $2\beta$.

\red{Using this framework, we provide approximation bounds for the piecewise affine policy that are significantly better than those of the optimal affine policy in~\cite{BB15} for many SPI uncertainty sets}. For instance, we show that our policy gives a  $O (m^{1/4})$-approximation for the two-stage adjustable robust problem~\eqref{eqn:ar} with hypersphere uncertainty set as in~\eqref{def:sphere}, compared to the affine policy in \cite{BB15} that has an approximation bound of $O(\sqrt{m})$. More generally, the performance bound for our policy for the $p$-norm ball is $ O ( m^{\frac{p-1}{p^2}} )$ as opposed to $ O  ( m^\frac{1}{p} ) $ given by the affine policy in~\cite{BB15}~\footnote{\textbf{Remark.} We note that in~\cite{BB15}, in Tables 1 and 2, there is a typo in the performance bound for affine policies for $p$-norm balls. According to Theorem 3 in~\cite{BB15}, the bound should be
$$\frac{m^{\frac{p-1}{p}}+m}{m^{\frac{p-1}{p}}+m^{\frac{1}{p}}}= O \left(m^{\frac{1}{p}} \right),$$
instead of $\frac{m^{\frac{p-1}{p}}+m}{m^{\frac{1}{p}}+m}$ as mentioned in Table 2 in~\cite{BB15}).
}. Table~\ref{tab:results} summarizes the above comparisons.  \red{We also present computational experiments and observe that our policy also outperforms affine policy in computation time on several examples of uncertainty sets considered in our experiments including hypersphere, norm-balls and certain polyhedral uncertainty sets.  However, we would like to note that our piecewise affine policy does not a generalize affine policy and there are instances where affine policy performs better than our policy. For instance, we observe in our computational experiments that the performance of affine policy is better than our policy for budget of uncertainty sets.}

%Moreover, we present a family of uncertainty sets where affine policy is $\Omega ( \sqrt{m}) $ and our policy gives a better approximation of  $ O ( m^{\frac{1}{4}})$.  We would like to note however, that our piecewise affine policy is not necessarily an optimal one. It improves over affine policy for many interesting families of uncertainty sets but there exists examples where affine policy performs better than our policy.

\vspace{2mm}
\noindent {\bf Results for general uncertainty sets}. 
\red{While the dominating set $\hat{\cal U}$ is given in an efficiently computable closed-form expression for SPI sets, the construction of
$\hat{\cal U}$  for general uncertainty sets requires solving a sequence of MIPs   which is computationally much harder than for the case of SPI sets.
In Section 4, we give an algorithm for constructing the dominating set $\hat{\cal U}$, and a piecewise affine policy for general uncertainty set ${\cal U}$. Our framework is not necessarily computationally more appealing than computing optimal affine policies. However, we would like to note that in practice these MIPs can be solved efficiently.} Moreover, the construction of the dominating set $\hat{\cal U}$ is independent
of the parameters of the adjustable problem and depends only on the uncertainty set, ${\cal U}$. Therefore, $\hat{\cal U}$ can be computed offline and then used to construct the piecewise affine policy efficiently.

We show that our policy gives a $O(\sqrt m)$-approximation for general uncertainty sets which is same as the worst-case performance bound for affine policy. We also show that the bound of $O(\sqrt m)$ is tight. In particular, for the budget uncertainty set
\[{\cal U}= \left\{ \mb h \in \mathbb R^{m}_+ \;  \bigg\vert \;     \sum_{i=1}^m h_i = \sqrt{m} , \; 0 \leq h_i \leq 1 \ \;  \forall i \in [m]       \right\},\]
the performance bound of our piecewise affine policy is $\Theta(\sqrt m)$. Furthermore, the bound of $\Theta(\sqrt m)$ holds even if we consider dominating sets with a polynomial number of extreme points that are significantly more general than a simplex.  While this example shows that the worst-case performance of our policy is the same as the worst-case performance of the affine policy, we would like to emphasize that our policy still gives a significantly better approximation than affine policies for many important uncertainty sets, and does so in a fraction of computing time (see Section 6.2).

\begin{table}[h]
%[H]
\centering
\resizebox{\columnwidth}{!}{%
\begin{tabular} {|c|l|c|c|}
  \hline
No. & Uncertainty set  &Bounds in ~\cite{BB15} & Our Bounds \\
  \hline
%  1 & $\left\{ \mb h \in \mathbb R^{m}_+ \;  \big\vert \; \| \mb h\|_2 \leq 1 \right\}$& $O \left( \sqrt{m} \right)$ \footnote{There is a typo in \cite{BB15}, the bound should be $ \frac{m^{\frac{p-1}{p}}+m}{m^{\frac{p-1}{p}}+m^{\frac{1}{p}}}= O \left(m^{\frac{1}{p}} \right) $  for p-norm balls.\label{foota}} & $ O \left( m^{\frac{1}{4}} \right)$  \\
1 & $\left\{ \mb h \in \mathbb R^{m}_+ \;  \big\vert \; \| \mb h\|_2 \leq 1 \right\}$& $O \left( \sqrt{m} \right)$ & $ O \left( m^{\frac{1}{4}} \right)$  \\
   \hline
   2 & \red{$\left\{ \mb h \geq \mb 0 \; \left | \;  \sum_{i=1}^m r_i h_i^ 2 \leq 1 \right.\right\}$}& \red{$O \left( \sqrt{m} \right)$} & \red{$ O \left( m^{\frac{1}{4}} \right)$}  \\
  \hline
   3 & $\left\{\mb{h}\in{\mathbb R}^{m}_+\;|\;   \mb{h}^T \mb \Sigma \mb{h}       \leq 1\right\}$&---& $ O  \left( m^{\frac{2}{5}}\right) $\\
  \hline
  4 & $\left\{ \mb h \in \mathbb R^{m}_+ \;  \big\vert \; \| \mb h\|_p \leq 1 \right\}$ &$O \left( m^{\frac{1}{p}}\right)$ & $O\left(m^{\frac{p-1}{p^2}}\right)$  \\
 %  3 & $\left\{ \mb h \in \mathbb R^{m}_+ \;  \big\vert \; \| \mb h\|_p \leq 1 \right\}$ &$O \left( m^{\frac{1}{p}}\right)$
% \footnotemark[1] & $O\left(m^{\frac{p-1}{p^2}}\right)$  \\
  \hline
  5 & $\left\{ \mb h \in \mathbb R^{m}_+ \;  \big\vert \; \| \mb h\| _p \leq 1,  \;  \| \mb h\| _q \leq r  \right\}$ & $ O \left( r^{-1} m^{\frac{1}{q}}\right)  $&    $O \left( \min \left( r^{\frac{1-p}{p}}m^{\frac{p-1}{pq}} , r^{\frac{1}{q}}m^{\frac{q-1}{q^2}} \right) \right) $  \\
  \hline
  6 & $\left\{ \mb h \in [0,1]^{m} \;  \big\vert \; \sum_{i=1}^m h_i \leq k \right\}$&  $ O \left(\frac{k^2+mk}{k^2+m}   \right)$ & $ O \left( \min \left( k, \frac{m}{k}\right) \right) $\\
  \hline
\end{tabular}%
}
\caption{Comparison with performance bounds for affine policies in Bertsimas and Bidkhori \cite{BB15}. The ellipsoid in Example 3 is assumed to be a permutation invariant set. There is no specialized bound for this Ellipsoid in \cite{BB15}. For intersection of norm-balls (Example 4 in the table), we assume  $ m^{\frac{1}{q}-\frac{1}{p}} \geq r \geq 1 $.}\label{tab:results}
 \end{table}

%%%%%%%%%%%%%%%%%%%%%%%%

\vspace{3mm}\noindent\textbf{Outline.} In Section~\ref{sec:framework}, we present the new framework for approximating the two-stage adjustable robust problem~\eqref{eqn:ar} via dominating uncertainty sets and constructing piecewise affine policies. %Based on this framework, we construct a piecewise affine solution to~\eqref{eqn:ar} with good approximation bounds.
\red{In Section~\ref{sec:perm-inv}, we provide improved approximation bounds for~\eqref{eqn:ar} for scaled permutation invariant sets.} We present the  case of general uncertainty sets in Section~\ref{sec:gen}.
%In Section~\ref{sec:comparison}, we compare our policy to an affine policy over ${\cal U}$.
In Section~\ref{sec:worst-case}, we present a family of lower-bound instances where our piecewise affine policy has the worst performance bound and finally in Section~\ref{sec:sim}, we present a computational study to test our policy and compare it to an affine policy over ${\cal U}$.

%This policy can be computed efficiently and gives better approximation bound than $O(\sqrt{m})$ which is the worst case approximation of the affine policy over ${\cal U}$ in ~\cite{BG10}.
%boundaries points of the uncertainty set ${\cal U}$ with a simple set. In particular, we construct a set $\hat{\cal U}$ that {\em dominates} the uncertainty set ${\cal U}$ such that the two-stage adjustable robust problem~\eqref{eqn:ar} can be efficiently solved over $\hat{\cal U}$.  We introduce the following definition for {\em dominating } uncertainty set.

\section{A new framework for piecewise affine policies}~\label{sec:framework}
We present a piecewise affine policy to approximate the two-stage adjustable robust problem~\eqref{eqn:ar}. Our policy is based on approximating the uncertainty set ${\cal U}$ with a simple set $\hat{\cal U}$ such that the adjustable  problem~\eqref{eqn:ar} can be efficiently solved over $\hat{\cal U}$. In particular, we select $\hat{\cal U}$  such that it  {\em dominates} ${\cal U}$ and it is {\em close} to ${\cal U}$. We make these notions precise with the following definitions.

\begin{definition} {\bf (Domination)}\label{Domination}
Given an uncertainty set ${\cal U}\subseteq{\mathbb R}^{m}_+$, $\hat{{\cal U}}\subseteq{\mathbb R}^{m}_+$ dominates ${\cal U}$ if for all
$\mb{h}\in {\cal U}$, there exists $\hat{\mb{h}}\in\hat{\cal U}$ such that $\hat{\mb{h}}\geq\mb{h}$.
\end{definition}

\begin{definition} {\bf (Scaling factor)}\label{def:scaling-factor}
Given a full-dimensional uncertainty set ${\cal U}\subseteq{\mathbb R}^{m}_+$ and $\hat{\cal U}\subseteq{\mathbb R}^{m}_+$ that dominates ${\cal U}$. We define the scaling factor $\beta ({\cal U},\hat{{\cal U}})$  as following
\[\beta ({\cal U},\hat{{\cal U}})= \min \left\{ \beta >0 \; |\;\hat{\cal U} \subseteq  \beta \cdot {\cal U} \right\} .\]
\end{definition}

For the sake of simplicity, we denote the scaling factor $\beta ({\cal U},\hat{{\cal U}})$ by $\beta$ in the rest of this paper. The scaling factor always exists since ${\cal U}$ is full-dimensional. Moreover, it is greater than one because $\hat{\cal U}$ dominates ${\cal U}$.
Note that the dominating set $\hat{\cal U}$ does not necessarily contain ${\cal U}$. We illustrate this in the following example.

\vspace{2mm}
\noindent {\bf Example}. Consider the uncertainty set  $ {\cal U} $ defined in \eqref{def:sphere} which is the intersection of the unit $\ell_2$-norm ball and the non-negative orthant. We show later in this paper (Proposition \ref{prop:sphere}) that the simplex $\hat{\cal U} $  dominates ${\cal U}$ where
\begin{equation}\label{def:dom:sphere}
\hat{\cal U}= m^{\frac{1}{4}}\cdot {\sf conv } \left( \mb e_1,\ldots,\mb e_m, \frac{1}{\sqrt{m}}\mb e \right).
\end{equation}
Figures~\ref{fig:sph} and~\ref{fig:doma} illustrate the sets ${\cal U} $ and $\hat{\cal U}$ for $m=3$. Note that $\hat{\cal U}$ does not contain ${\cal U}$ but only dominates ${\cal U}$. This is an important property in our framework.
\begin{figure}[H]
\centering
\minipage{0.4\textwidth}
		\includegraphics[scale=0.35]{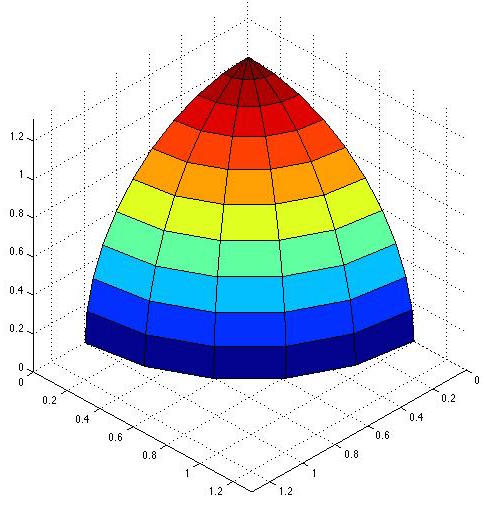}
 		 \caption{The uncertainty set \eqref{def:sphere}  }
 		 \label{fig:sph}
\endminipage\hfill 
\minipage{0.5\textwidth}
		\includegraphics[scale=0.48]{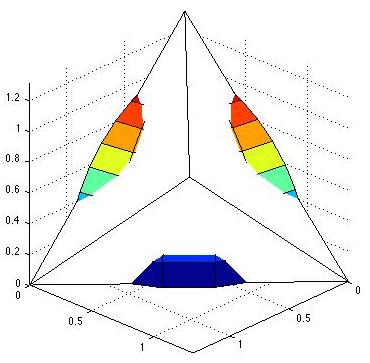}
   	 	\caption{ The dominating set $\hat{\cal U}$ \eqref{def:dom:sphere}}
   	 	\label{fig:doma}
\endminipage\hfill
\end{figure}

 The following theorem shows that solving the adjustable problem over the set $\hat{\cal U}$ gives a $\beta$-approximation to the two-stage adjustable robust problem~\eqref{eqn:ar}.

\begin{theorem} \label{thm:ineq}
\red{Consider an uncertainty set ${\cal U}$ that verifies Assumption $1$ and $\hat{\cal U}\subseteq{\mathbb R}^m_+$ that dominates ${\cal U}$.} Let $\beta$ be the scaling factor of $({\cal U},\hat{\cal U})$. Moreover, let $z_{\sf AR}({\cal U})$ and $z_{\sf AR}(\hat{\cal U})$ be the optimal values for~\eqref{eqn:ar} corresponding to  $\cal U$ and $\hat{\cal U}$, respectively. Then,
\[z_{\sf AR}({\cal U}) \leq z_{\sf AR}(\hat{\cal U}) \leq \beta \cdot z_{\sf AR}({\cal U}).\]
\end{theorem}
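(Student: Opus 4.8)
The plan is to prove the two inequalities separately, using the domination property for the left one and the scaling property for the right one.

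\textbf{Left inequality: $z_{\sf AR}({\cal U}) \leq z_{\sf AR}(\hat{\cal U})$.} First I would take an optimal (or near-optimal) adjustable solution $(\mb x, \mb y(\cdot))$ for $\Pi_{\sf AR}(\hat{\cal U})$, i.e.\ a first-stage decision $\mb x$ and a recourse map $\mb y(\hat{\mb h})$ for $\hat{\mb h} \in \hat{\cal U}$ satisfying $\mb A \mb x + \mb B \mb y(\hat{\mb h}) \geq \hat{\mb h}$ for all $\hat{\mb h} \in \hat{\cal U}$. The idea is to build from this a feasible solution for $\Pi_{\sf AR}({\cal U})$ of no larger cost. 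Keep the same $\mb x$. For each $\mb h \in {\cal U}$, use the domination property (Definition~\ref{Domination}) to pick $\hat{\mb h} \in \hat{\cal U}$ with $\hat{\mb h} \geq \mb h$, and set $\tilde{\mb y}(\mb h) := \mb y(\hat{\mb h})$. Then $\mb A \mb x + \mb B \tilde{\mb y}(\mb h) = \mb A \mb x + \mb B \mb y(\hat{\mb h}) \geq \hat{\mb h} \geq \mb h$, so the covering constraint holds; nonnegativity of $\tilde{\mb y}(\mb h)$ is inherited. The objective is $\mb c^T \mb x + \max_{\mb h \in {\cal U}} \mb d^T \tilde{\mb y}(\mb h) \leq \mb c^T \mb x + \max_{\hat{\mb h}\in\hat{\cal U}} \mb d^T \mb y(\hat{\mb h}) = z_{\sf AR}(\hat{\cal U})$, since every $\tilde{\mb y}(\mb h)$ is one of the $\mb y(\hat{\mb h})$'s. (One technical remark: the second-stage map need not be measurable/nice, but since the adjustable problem only cares about the worst-case value and we are taking a min over $\mb y(\mb h)$ pointwise anyway, this causes no issue; alternatively, for each $\mb h$ just take $\min$ over all feasible $\mb y$, which is at most $\mb d^T \mb y(\hat{\mb h})$.)

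\textbf{Right inequality: $z_{\sf AR}(\hat{\cal U}) \leq \beta \cdot z_{\sf AR}({\cal U})$.} Here the key observation is $\hat{\cal U} \subseteq \beta \cdot {\cal U}$ by definition of the scaling factor $\beta$. So I would reduce to showing $z_{\sf AR}(\beta \cdot {\cal U}) \leq \beta \cdot z_{\sf AR}({\cal U})$ together with monotonicity of $z_{\sf AR}$ with respect to set inclusion of the uncertainty set, namely $z_{\sf AR}(\hat{\cal U}) \leq z_{\sf AR}(\beta\cdot {\cal U})$ when $\hat{\cal U}\subseteq \beta\cdot{\cal U}$ (this monotonicity is immediate: a solution feasible for the larger set restricts to a feasible solution of no larger cost for the smaller set, exactly as in the left inequality but with actual inclusion rather than domination). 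For the scaling step, take an optimal solution $(\mb x^*, \mb y^*(\cdot))$ for $\Pi_{\sf AR}({\cal U})$ and scale it: set $\mb x := \beta \mb x^*$ and, for $\mb g = \beta \mb h \in \beta\cdot{\cal U}$, set $\mb y(\mb g) := \beta \mb y^*(\mb h)$. Then $\mb A(\beta\mb x^*) + \mb B(\beta \mb y^*(\mb h)) = \beta(\mb A\mb x^* + \mb B \mb y^*(\mb h)) \geq \beta \mb h = \mb g$, so feasibility holds; nonnegativity is preserved since $\beta > 0$; and the cost is $\mb c^T(\beta \mb x^*) + \max_{\mb g}\mb d^T(\beta\mb y^*(\mb h)) = \beta(\mb c^T\mb x^* + \max_{\mb h}\mb d^T\mb y^*(\mb h)) = \beta z_{\sf AR}({\cal U})$. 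Chaining gives $z_{\sf AR}(\hat{\cal U}) \leq z_{\sf AR}(\beta\cdot{\cal U}) \leq \beta z_{\sf AR}({\cal U})$.

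\textbf{Main obstacle.} There is no deep obstacle; the proof is a pair of solution-transfer arguments. The one point that requires a little care is the left inequality: the selection $\mb h \mapsto \hat{\mb h}$ guaranteed by domination is not canonical and need not be continuous or affine, so the resulting recourse map $\tilde{\mb y}$ is an arbitrary function. This is harmless for the adjustable robust \emph{value} (which is all the theorem concerns, since the inner problem is solved pointwise in $\mb h$), but it is worth noting explicitly so the reader does not expect the transferred policy to be affine or piecewise affine — that refinement is exactly what the later sections address via the explicit piecewise affine map $\mb{\hat h}(\mb h) = \beta\mb v + (\mb h - \beta\mb v)^+$. I would also state at the outset that $\beta$ is well-defined and $\beta \geq 1$, as already noted after Definition~\ref{def:scaling-factor}, so that both inequalities are meaningful.
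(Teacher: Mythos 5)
Your proposal is correct and follows essentially the same route as the paper: the left inequality by transferring the optimal recourse over $\hat{\cal U}$ to ${\cal U}$ via the domination map, and the right inequality by scaling the optimal solution over ${\cal U}$ by $\beta$ and using $\hat{\mb h}/\beta \in {\cal U}$. Your factoring of the right inequality into monotonicity plus homogeneity composes to exactly the paper's one-step construction $(\beta\mb x^*,\ \beta\mb y^*(\hat{\mb h}/\beta))$.
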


The proof of Theorem \ref{thm:ineq} is presented in Appendix \ref{apx-proofs:thm:ineq}.
%%%%%
%%%We defer the proof of Theorem \ref{thm:ineq} to the full version of the paper.

\subsection{Choice of $\hat{\cal U}$}

Theorem~\ref{thm:ineq} provides a new framework for approximating the two-stage adjustable robust problem $\Pi_{\sf AR}(\cal U)$~\eqref{eqn:ar}. \red{Note that we require  $\hat{\cal U}$ to be such that it dominates ${\cal U}$ and that $\Pi_{\sf AR}(\hat{\cal U})$ can be solved efficiently over $\hat{\cal U}$}. In fact, the latter is satisfied if the number of extreme points of $\hat{\cal U}$ is small and is explicitly given (typically polynomial of $m$). In our framework, we choose the dominating set to be a simplex of the following form
\begin{equation}\label{eq:simplex}
\hat{\cal U} = \beta \cdot {\sf conv } \left( \mb e_1,\ldots,\mb e_m, \mb v \right),
\end{equation}
for some $ \mb v \in {\cal U}$. The coefficient $\beta$ and $\mb{v} \in {\cal U}$ are chosen such that $\hat {\cal U}$ dominates $ {\cal U }$. %Note that for the moment $ \mb v $ can be any point in ${\cal U}$, we will define later $ \mb v $ depending on the structure of each uncertainty set.
For a given $\hat{\cal U}$ (i.e.,  $\beta$ and $\mb v \in {\cal U}$), the adjustable robust problem, $\Pi_{\sf AR}(\hat{\cal U})$~\eqref{eqn:ar} can be solved efficiently as it can be reduced to the following LP:
\[\begin{aligned}
z_{\sf AR}(\hat{\cal U}) = \min  \;& \mb{c}^T\mb{x}+z \\
& z\geq\mb{d}^T\mb{y}_i,\;\forall i\in[m+1] \\
& \mb{A}\mb{x}+\mb{B}\mb{y}_i\geq \beta \mb{e}_i,\;\forall i\in[m] \\
& \mb{A}\mb{x}+\mb{B}\mb{y}_{m+1}\geq \beta \mb v \\
& \mb{x}\in {\mathbb R}^{n}_+, \;  \mb{y}_i\in{\mathbb R}^{n}_+, \;\forall i\in[m+1] .\\
\end{aligned}\]

\subsection{Mapping points in ${\cal U}$ to dominating points }

Consider the following  piecewise affine mapping for any  $\mb h \in {\cal U}$:

\begin{equation} \label{eq:mapping}
\forall \mb h \in {\cal U}, \qquad \mb{ \hat{h}}(\mb h) = \beta \mb v + ( \mb h - \beta \mb v )_+.
\end{equation}
We show that this maps any $\mb h \in {\cal U}$ to a dominating point contained in the down-monotone completion of $2 \cdot \hat{\cal U}$. First, the following structural result is needed.

\begin{lemma}{\bf (Structural Result)} \label{lem:ineq}
Consider an uncertainty set ${\cal U}$ that verifies Assumption $1$.

{\bf $a)$}
Suppose there exists $ \beta $ and $ \mb v \in {\cal U}$ such that
$ \;  \hat{\cal U} = \beta \cdot {\sf conv } \left( \mb e_1,\ldots,\mb e_m, \mb v \right)$ dominates $\cal U $. Then,
\begin{equation} \label{ineq:beta-v}
\frac{1}{\beta} \sum_{i=1}^m \left( h_i -\beta v_i  \right)^{+} \leq 1, \; \forall \mb h \in {\cal U}.
\end{equation}

{\bf $b)$} \red{Moreover, if there exists $\beta $ and $ \mb v \in {\cal U}$ satisfying~\eqref{ineq:beta-v}.
 Then,\\  $2\beta \cdot {\sf conv } \left( \mb e_1,\ldots,\mb e_m, \mb v \right)$ dominates ${\cal U} $.}
\end{lemma}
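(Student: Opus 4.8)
The plan is to prove the two parts separately, using the domination property in part (a) and a direct construction of dominating points in part (b).

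For part (a), I would start from the definition of domination: for a fixed $\mb h \in {\cal U}$, there is $\hat{\mb h} \in \hat{\cal U}$ with $\hat{\mb h} \geq \mb h$. Since $\hat{\mb h} \in \hat{\cal U} = \beta \cdot {\sf conv}(\mb e_1,\ldots,\mb e_m,\mb v)$, write $\hat{\mb h} = \beta\left(\sum_{i=1}^m \mu_i \mb e_i + \mu_{m+1} \mb v\right)$ for multipliers $\mu_i \geq 0$ summing to $1$. Coordinate-wise this gives $h_j \leq \hat h_j = \beta(\mu_j + \mu_{m+1} v_j)$ for each $j$, hence $(h_j - \beta v_j)^+ \leq (h_j - \beta \mu_{m+1} v_j)^+ \leq \beta \mu_j$ (using $\mu_{m+1}\le 1$ and $\mu_j \geq 0$, so that if $h_j - \beta v_j > 0$ then $h_j - \beta v_j \le h_j - \beta \mu_{m+1} v_j \le \beta\mu_j$, and otherwise the left side is $0$). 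Summing over $j$ gives $\sum_{j=1}^m (h_j - \beta v_j)^+ \leq \beta \sum_{j=1}^m \mu_j \leq \beta$, which is exactly~\eqref{ineq:beta-v} after dividing by $\beta$. The only mild subtlety is handling the $\mu_{m+1} v_j$ term correctly when taking positive parts; I should be careful to note $\beta v_j \geq \beta \mu_{m+1} v_j$ since $v_j \geq 0$ and $\mu_{m+1}\le 1$.

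For part (b), assume $\beta$ and $\mb v \in {\cal U}$ satisfy~\eqref{ineq:beta-v}; I must exhibit, for each $\mb h \in {\cal U}$, a point of $2\beta \cdot {\sf conv}(\mb e_1,\ldots,\mb e_m,\mb v)$ dominating $\mb h$. The natural candidate, guided by the mapping~\eqref{eq:mapping}, is to write $\mb h = \beta \mb v + (\mb h - \beta \mb v)^+ - (\beta\mb v - \mb h)^+ \leq \beta \mb v + (\mb h - \beta \mb v)^+ =: \hat{\mb h}(\mb h)$. Then I need $\hat{\mb h}(\mb h) \in 2\beta\cdot{\sf conv}(\mb e_1,\ldots,\mb e_m,\mb v)$. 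Set $t := \frac{1}{\beta}\sum_{i=1}^m (h_i - \beta v_i)^+ \in [0,1]$ by~\eqref{ineq:beta-v}. Then $\beta\mb v = 2\beta \cdot \tfrac12 \mb v$ contributes weight $\tfrac12$ on the vertex $\mb v$, and $(\mb h - \beta\mb v)^+ = \sum_i (h_i-\beta v_i)^+ \mb e_i = 2\beta \sum_i \frac{(h_i-\beta v_i)^+}{2\beta}\mb e_i$ contributes total weight $\frac{t}{2} \le \frac12$ on the $\mb e_i$'s; the remaining weight $\frac12 - \frac t2 \geq 0$ can be dumped on any vertex (say $\mb v$, or split arbitrarily), since the set ${\sf conv}(\mb e_1,\ldots,\mb e_m,\mb v)$ is down-monotone-free but we only need containment, and adding nonnegative slack to a convex combination of nonnegative vertices only increases coordinates — wait, that would overshoot. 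More cleanly: the point $\frac{1}{2\beta}\hat{\mb h}(\mb h) = \tfrac12\mb v + \sum_i \frac{(h_i-\beta v_i)^+}{2\beta}\mb e_i$ has nonnegative coefficients summing to $\tfrac12 + \tfrac t2 \le 1$, so it lies in ${\sf conv}(\mb 0, \mb e_1,\ldots,\mb e_m,\mb v) \subseteq {\sf conv}(\mb e_1,\ldots,\mb e_m,\mb v)$ provided $\mb 0$ is in the latter; it is not, but since $\hat{\cal U}$'s down-monotone completion is what we really compare against — actually the statement says $2\beta\cdot{\sf conv}(\ldots)$ dominates ${\cal U}$, and domination only requires \emph{some} $\hat{\mb h}$ in the set with $\hat{\mb h}\ge \mb h$, so I can scale the coefficients up to sum to exactly $1$, i.e. take $\hat{\mb h} := 2\beta(\tfrac12\mb v + \sum_i \frac{(h_i-\beta v_i)^+}{2\beta}\mb e_i + \gamma \mb v)$ for appropriate $\gamma \ge 0$ making the weights sum to $1$; this only increases coordinates, so it still dominates $\mb h$, and it lies in $2\beta\cdot{\sf conv}(\mb e_1,\ldots,\mb e_m,\mb v)$. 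I would present this last normalization step carefully.

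The main obstacle is the bookkeeping in part (b): making sure the convex-combination coefficients are simultaneously (i) nonnegative, (ii) summing to $1$ after rescaling by $2\beta$, and (iii) yielding a point that still coordinate-wise dominates $\mb h$. The factor of $2$ is exactly what buys the slack: weight $\tfrac12$ goes to $\mb v$ and the rest (at most $\tfrac12$, by~\eqref{ineq:beta-v}) to the $\mb e_i$'s, with any leftover absorbed harmlessly. Part (a) is routine once the positive-part manipulation is set up correctly.
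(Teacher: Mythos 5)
Your proposal is correct and follows essentially the same route as the paper: part (a) extracts the convex-combination weights of a dominating point and bounds each $(h_j-\beta v_j)^+$ by $\beta\mu_j$ (the paper does the identical computation, summed over the index set where $h_i\geq \beta v_i$), and part (b) uses the same candidate $\beta \mb v + (\mb h - \beta\mb v)^+$ with total weight $\tfrac12 + \tfrac t2 \le 1$. Your extra care in redistributing the leftover weight onto $\mb v$ (rather than implicitly invoking the vertex $\mb 0$) is a slightly cleaner finish than the paper's ``consequently,'' but it is the same argument.
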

The proof of Lemma \ref{lem:ineq} is presented in Appendix \ref{apx-proofs:lem:ineq}.

The following lemma shows that the mapping in~\eqref{eq:mapping} maps any $\mb h \in {\cal U}$ to a dominating point that belongs to the \red{down-monotone} completion of $2 \cdot \hat{\cal U}$.
 \begin{lemma} \label{lem:2U}
 \red{For all $ \mb h \in {\cal U}$, $  \mb{ \hat{h}}(\mb h) $ as defined in \eqref{eq:mapping} is a dominating point that belongs to the down-monotone completion of $2 \cdot \hat{\cal U}$}.
 \end{lemma}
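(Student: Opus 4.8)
The goal is to show that for all $\mb h \in {\cal U}$, the point $\mb{\hat h}(\mb h) = \beta \mb v + (\mb h - \beta \mb v)_+$ is (i) a dominating point for $\mb h$, i.e. $\mb{\hat h}(\mb h) \geq \mb h$, and (ii) contained in the down-monotone completion of $2 \cdot \hat{\cal U}$, where $\hat{\cal U} = \beta \cdot {\sf conv}(\mb e_1, \ldots, \mb e_m, \mb v)$.

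The first part is straightforward and I would dispatch it componentwise: for each coordinate $i$, either $h_i \geq \beta v_i$, in which case $\hat h_i(\mb h) = \beta v_i + (h_i - \beta v_i) = h_i$, or $h_i < \beta v_i$, in which case $\hat h_i(\mb h) = \beta v_i > h_i$. Either way $\hat h_i(\mb h) \geq h_i$, so $\mb{\hat h}(\mb h) \geq \mb h$.

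For the second part, the plan is to exhibit $\mb{\hat h}(\mb h)$ as coordinatewise below a convex combination of the vertices of $2 \cdot \hat{\cal U}$, namely $2\beta \mb e_1, \ldots, 2\beta \mb e_m, 2\beta \mb v$. Write $\mb{\hat h}(\mb h) = \beta \mb v + \mb w$ where $\mb w = (\mb h - \beta \mb v)_+ \geq \mb 0$. By Lemma~\ref{lem:ineq}(a), since $\hat{\cal U}$ dominates ${\cal U}$, we have $\frac{1}{\beta}\sum_{i=1}^m w_i = \frac{1}{\beta}\sum_{i=1}^m (h_i - \beta v_i)^+ \leq 1$, so $\sum_i w_i \leq \beta$. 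Now I would write $\beta \mb v = \tfrac12 (2\beta \mb v)$ and express $\mb w$ as dominated by $\tfrac12 \sum_i \mu_i (2\beta \mb e_i)$ for suitable nonnegative $\mu_i$ summing to at most $1$: take $\mu_i = w_i / \beta$, so that $\sum_i \mu_i = \tfrac1\beta \sum_i w_i \leq 1$ and $\tfrac12 \mu_i \cdot 2\beta \mb e_i$ summed over $i$ equals $\sum_i w_i \mb e_i = \mb w$. Then
\[
\mb{\hat h}(\mb h) = \tfrac12 (2\beta \mb v) + \tfrac12 \sum_{i=1}^m \mu_i (2\beta \mb e_i) \leq \tfrac12 (2\beta \mb v) + \tfrac12 \sum_{i=1}^m \mu_i (2\beta \mb e_i) + \left(\tfrac12 - \tfrac12\sum_i \mu_i\right)(2\beta \mb v),
\]
where the last term is nonnegative; the right-hand side is a convex combination of the vertices $2\beta \mb e_i$ and $2\beta \mb v$ (the coefficients $\tfrac12 \mu_i$ and $\tfrac12 + \tfrac12(1 - \sum_i \mu_i)$ are nonnegative and sum to $1$), hence lies in $2\cdot\hat{\cal U}$. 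Therefore $\mb{\hat h}(\mb h)$ is coordinatewise below a point of $2 \cdot \hat{\cal U}$, i.e. it belongs to the down-monotone completion of $2 \cdot \hat{\cal U}$.

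I do not expect any genuine obstacle here; the only thing to be careful about is bookkeeping with the convex-combination coefficients — making sure they are nonnegative and sum to exactly one by absorbing the slack $1 - \sum_i \mu_i$ into the coefficient of $\mb v$ (the choice $\mb v \in {\cal U}$ being a valid vertex of $\hat{\cal U}$ is what makes this work), and invoking Lemma~\ref{lem:ineq}(a) for the crucial bound $\sum_i (h_i - \beta v_i)^+ \leq \beta$. The structural inequality~\eqref{ineq:beta-v} is precisely the ingredient that controls the size of the "overflow" part $(\mb h - \beta \mb v)_+$ and keeps $\mb{\hat h}(\mb h)$ within a factor $2$ of $\hat{\cal U}$.
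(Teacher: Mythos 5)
Your proof is correct and follows essentially the same route as the paper: the domination part is the same one-line observation, and the containment part uses Lemma~\ref{lem:ineq}(a) to bound $\sum_i (h_i-\beta v_i)^+ \leq \beta$ and then absorbs the slack into the coefficient of $\mb v$, exactly as the paper does (the paper writes the bound as a sum of two points of $\hat{\cal U}$, you write it directly as a convex combination of the vertices of $2\cdot\hat{\cal U}$ — an equivalent rearrangement). No gaps.
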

 \begin{proof}
  It is clear that $\mb{ \hat{h}}(\mb h) $ dominates $ \mb h$ because $\mb{ \hat{h}}(\mb h) \geq \beta \mb v + ( \mb h - \beta \mb v ) = \mb h $.
  Moreover,  for all $ \mb h \in {\cal U}$, we have
\red{
\begin{align*}
\hat{\mb h}(\mb h) &= \beta \mb v + \frac{1}{\beta} \sum_{i=1}^m ( h_i - \beta v_i )^+ \beta \mb e_i \\
& \leq \underbrace{ \beta \mb v }_{\in \hat{\cal U} }+ \underbrace{\frac{1}{\beta} \sum_{i=1}^m ( h_i - \beta v_i )^+ \beta \mb e_i + ( 1-  \frac{1}{\beta}\sum_{i=1}^m ( h_i - \beta v_i )^+ ) \beta \mb v }_{\in \hat{\cal U} } \in 2 \cdot \hat{\cal U}
\end{align*}}
where the inequality
$$ 1- \frac{1}{\beta} \sum_{i=1}^m (  h_i - \beta  v_i )^+ \geq 0.$$
follows from part $a)$ of Lemma \ref{lem:ineq}.
 \red{Therefore, $  \mb{ \hat{h}}(\mb h) $ belongs to the down-monotone completion of $2 \cdot \hat{\cal U}$}.
   \end{proof}

\subsection{Piecewise affine policy }
We construct a \emph{piecewise affine policy over} ${\cal U}$ from the optimal solution of  $\Pi_{\sf AR}(\hat{\cal U})$ based on the piecewise affine mapping in \eqref{eq:mapping}. %In particular, given an uncertainty set ${\cal U}$, we consider a dominating set $\hat{\cal U}$ of the form \eqref{eq:simplex}.
Let $\mb{\hat{x}}, \mb{\hat{y}(\hat{h})}$ for $\mb{\hat{h}} \in \hat{\cal U }$ be an optimal solution of $\Pi_{\sf AR}(\hat{\cal U})$. Since $\hat{\cal U}$ is a simplex, we can compute this efficiently.

\vspace{3mm}
\noindent {\bf The piecewise affine policy (PAP)}
\begin{equation} \label{picewise_sl}
\begin{split}
 \mb x  & =  2 \hat{\mb x}  \\
 \mb y( \mb h )  & = \frac{1}{\beta} \sum_{i=1}^m      \left( h_i -\beta v_i  \right)^+ \hat{\mb y }( \beta \mb e_i)+\hat{\mb y }(\beta \mb v), \qquad  \qquad \forall \mb h \in \cal U.
\end{split}
\end{equation}
The following theorem shows that the above PAP gives a $2\beta$-approximation for $\Pi_{\sf AR}({\cal U})$~\eqref{eqn:ar}.

\begin{theorem} \label{thm:cost}
Consider an uncertainty set ${\cal U}$ that verifies Assumption $1$ and  $\hat{\cal U} = \beta \cdot {\sf conv } \left( \mb e_1,\ldots,\mb e_m, \mb v \right)$ be a dominating set where $ \mb v \in {\cal U}$. The piecewise affine solution in \eqref{picewise_sl} is feasible and gives a $2\beta$-approximation for the adjustable robust problem, $\Pi_{\sf AR}({\cal U})$~\eqref{eqn:ar}.
\end{theorem}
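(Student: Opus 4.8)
The plan is to verify the two claims of Theorem~\ref{thm:cost} separately: feasibility of the piecewise affine solution \eqref{picewise_sl} for $\Pi_{\sf AR}({\cal U})$, and the $2\beta$-approximation guarantee on its cost. Both will follow from combining the optimal solution $\hat{\mb x}, \hat{\mb y}(\hat{\mb h})$ of $\Pi_{\sf AR}(\hat{\cal U})$ with the structural facts already established, chiefly Lemma~\ref{lem:2U} (the mapping $\mb h \mapsto \hat{\mb h}(\mb h)$ lands in the down-monotone completion of $2\cdot\hat{\cal U}$) and part~$a)$ of Lemma~\ref{lem:ineq} (so that the scalar coefficient $1 - \tfrac1\beta\sum_i (h_i - \beta v_i)^+$ is nonnegative, making the convex-combination argument valid).

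For feasibility, I would fix an arbitrary $\mb h \in {\cal U}$ and write $\mb y(\mb h)$ as in \eqref{picewise_sl}. Observe that $\mb y(\mb h) \ge \mb 0$ since it is a nonnegative combination of the $\hat{\mb y}(\beta\mb e_i) \ge \mb 0$ plus $\hat{\mb y}(\beta\mb v) \ge \mb 0$. For the covering constraint, I use linearity: $\mb A(2\hat{\mb x}) + \mb B\mb y(\mb h) = \tfrac1\beta\sum_{i=1}^m (h_i - \beta v_i)^+ \bigl(\mb A\hat{\mb x} + \mb B\hat{\mb y}(\beta\mb e_i)\bigr) + \bigl(\mb A\hat{\mb x} + \mb B\hat{\mb y}(\beta\mb v)\bigr) + \bigl(1 - \tfrac1\beta\sum_i(h_i-\beta v_i)^+\bigr)\mb A\hat{\mb x}$, where I have split the $2\mb A\hat{\mb x}$ term so that a coefficient-$1$ bundle pairs with each of the $m$ vertex constraints and a coefficient-$\bigl(1-\tfrac1\beta\sum\bigr)$ bundle pairs with the $\mb v$ vertex; the leftover nonnegative $\mb A\hat{\mb x}$ term is just dropped using $\mb A, \hat{\mb x} \ge \mb 0$. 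Since $\hat{\mb x}, \hat{\mb y}$ are feasible for $\Pi_{\sf AR}(\hat{\cal U})$ we have $\mb A\hat{\mb x} + \mb B\hat{\mb y}(\beta\mb e_i) \ge \beta\mb e_i$ and $\mb A\hat{\mb x} + \mb B\hat{\mb y}(\beta\mb v) \ge \beta\mb v$, and the nonnegativity of the coefficients (from Lemma~\ref{lem:ineq}$a)$) gives $\mb A(2\hat{\mb x}) + \mb B\mb y(\mb h) \ge \tfrac1\beta\sum_i(h_i-\beta v_i)^+\beta\mb e_i + \beta\mb v + \bigl(1-\tfrac1\beta\sum_i(h_i-\beta v_i)^+\bigr)\beta\mb v = \hat{\mb h}(\mb h) \ge \mb h$, using exactly the identity from the proof of Lemma~\ref{lem:2U} and the domination $\hat{\mb h}(\mb h)\ge\mb h$. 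This proves feasibility.

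For the cost bound, the first-stage cost is $\mb c^T(2\hat{\mb x}) = 2\,\mb c^T\hat{\mb x}$. For the second stage, fix $\mb h\in{\cal U}$; by $\mb d \ge \mb 0$ and linearity, $\mb d^T\mb y(\mb h) = \tfrac1\beta\sum_i(h_i-\beta v_i)^+\,\mb d^T\hat{\mb y}(\beta\mb e_i) + \mb d^T\hat{\mb y}(\beta\mb v)$, which is a convex combination (plus a dropped nonnegative slack, or one can pad with the $\mb v$-term weighted by $1-\tfrac1\beta\sum$) of the values $\mb d^T\hat{\mb y}(\cdot)$ at the vertices of $\hat{\cal U}$, each of which is at most $z_{\sf AR}(\hat{\cal U}) - \mb c^T\hat{\mb x}$ since $\hat{\mb y}$ attains the adjustable optimum there. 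Hence $\max_{\mb h\in{\cal U}}\mb d^T\mb y(\mb h) \le z_{\sf AR}(\hat{\cal U}) - \mb c^T\hat{\mb x}$, so the total cost of the PAP is at most $2\mb c^T\hat{\mb x} + z_{\sf AR}(\hat{\cal U}) - \mb c^T\hat{\mb x} = \mb c^T\hat{\mb x} + z_{\sf AR}(\hat{\cal U}) \le 2 z_{\sf AR}(\hat{\cal U})$. Finally, by Theorem~\ref{thm:ineq}, $z_{\sf AR}(\hat{\cal U}) \le \beta\, z_{\sf AR}({\cal U})$, so the PAP cost is at most $2\beta\, z_{\sf AR}({\cal U})$, as claimed.

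The main obstacle — really the only delicate point — is the bookkeeping in splitting the doubled first-stage variable $2\hat{\mb x}$ so that every covering constraint of $\Pi_{\sf AR}(\hat{\cal U})$ can be invoked with a nonnegative multiplier summing correctly, and making sure the ``extra'' copies of $\mb A\hat{\mb x}$ and $\mb d^T\hat{\mb y}$ are accounted for (the factor $2$ in $\mb x$ is exactly what Lemma~\ref{lem:2U}'s ``$2\cdot\hat{\cal U}$'' demands). The nonnegativity of $1-\tfrac1\beta\sum_i(h_i-\beta v_i)^+$ from Lemma~\ref{lem:ineq}$a)$ is the crucial ingredient that makes this decomposition a genuine conic/convex combination rather than a formal manipulation. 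Everything else is linearity of $\mb A, \mb B, \mb c, \mb d$ together with the sign constraints $\mb A,\mb c,\mb d,\hat{\mb x},\hat{\mb y}\ge\mb 0$.
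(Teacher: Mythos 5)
Your overall route is the same as the paper's: decompose $2\mb A\hat{\mb x} + \mb B\mb y(\mb h)$ so that each covering constraint of $\Pi_{\sf AR}(\hat{\cal U})$ is invoked with a nonnegative multiplier, use Lemma~\ref{lem:ineq}$a)$ for the sign of $1-\tfrac1\beta\sum_i(h_i-\beta v_i)^+$, and finish with Theorem~\ref{thm:ineq}. The feasibility argument is correct in substance, though your displayed lower bound carries a spurious extra term $\bigl(1-\tfrac1\beta\sum_i(h_i-\beta v_i)^+\bigr)\beta\mb v$ and asserts the result equals $\hat{\mb h}(\mb h)$; after dropping the leftover $\mb A\hat{\mb x}$ as you describe, the bound is exactly $\beta\mb v + \sum_i(h_i-\beta v_i)^+\mb e_i = \hat{\mb h}(\mb h)\ge\mb h$, with no extra term.

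The one genuine error is in the cost step. The weights in $\mb d^T\mb y(\mb h) = \tfrac1\beta\sum_i(h_i-\beta v_i)^+\,\mb d^T\hat{\mb y}(\beta\mb e_i) + \mb d^T\hat{\mb y}(\beta\mb v)$ sum to $\tfrac1\beta\sum_i(h_i-\beta v_i)^+ + 1$, which Lemma~\ref{lem:ineq}$a)$ only bounds by $2$, not by $1$; this is not a convex combination, and the claimed inequality $\max_{\mb h\in{\cal U}}\mb d^T\mb y(\mb h) \le z_{\sf AR}(\hat{\cal U}) - \mb c^T\hat{\mb x}$ does not follow (it would yield a total cost of $\mb c^T\hat{\mb x}+z_{\sf AR}(\hat{\cal U})$, which is stronger than the theorem and is not justified). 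The correct bound is $\max_{\mb h\in{\cal U}}\mb d^T\mb y(\mb h) \le 2\max_{\hat{\mb h}\in\hat{\cal U}}\mb d^T\hat{\mb y}(\hat{\mb h})$, which together with $\mb c^T\mb x = 2\mb c^T\hat{\mb x}$ still gives total cost at most $2 z_{\sf AR}(\hat{\cal U}) \le 2\beta\, z_{\sf AR}({\cal U})$. This is exactly how the paper argues: it factors out the global $2$ first, so that the second-stage weights become $\tfrac1{2\beta}\sum_i(h_i-\beta v_i)^+ + \tfrac12 \le 1$ and the convex-combination bound is legitimate. So the theorem is proved once this step is repaired, but as written the intermediate inequality is false.
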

\begin{proof}
First, we show that the policy \eqref{picewise_sl} is feasible. We have,
 \begin{align*}
\mb A \mb x + \mb B \mb y ( \mb h ) &=    2 \mb A \hat{\mb x}  + \mb B \left(\frac{1}{\beta} \sum_{i=1}^m\left( h_i -\beta v_i  \right)^+ \hat{\mb y }( \beta \mb e_i)+ \hat{\mb y }(\beta \mb v) \right)\\
&=  \left(  \mb A \hat{\mb x}+\mb B \hat{\mb y }( \beta\mb v ) \right)  +
 \mb A \hat{\mb x} + \frac{1}{\beta} \sum_{i=1}^m \left( h_i -\beta v_i  \right)^+ \mb B \hat{\mb y }( \beta  \mb e_i)\\
 &\geq \left(  \mb A \hat{\mb x}+\mb B \hat{\mb y }( \beta \mb v ) \right)  +
 \frac{1}{\beta} \sum_{i=1}^m \left( h_i -\beta v_i \right)^+ \left( \mb B \hat{\mb y }( \beta \mb e_i)  + \mb A \hat{\mb x} \right) \\
  &\geq \beta \mb v+
 \sum_{i=1}^m \left( h_i -\beta v_i  \right)^+  \mb e_i  \\
 &\geq \beta \mb v  +      \sum_{i=1}^m \left( h_i -\beta v_i \right)    \mb e_i = \mb h,\\
\end{align*}
where the first inequality follows from part $a)$ of Lemma \ref{lem:ineq} and the non-negativity of $\hat{\mb x}$ and $ \mb A$. The second inequality follows from the feasibility of $   \mb{\hat{x}},  \mb{\hat{y}(\hat{h})}$.

To compute the performance of~\eqref{picewise_sl}, we have for any $ \mb h \in {\cal U},$
\begin{align*}
\mb c^{T} \mb x + \mb d^{T} \mb y ( \mb h ) & = 2   \left(   \mb c^{T}    \hat{\mb x}  +    \mb d^{T}  \left( \frac{1}{2\beta} \sum_{i=1}^m \left( h_i -\beta v_i \right)^+ \hat{\mb y }( \beta \mb e_i)+\frac{1}{2}\hat{\mb y }( \beta \mb v)  \right)            \right)\\
& \leq 2  \left(   \mb c^{T}    \hat{\mb x}  +    \underset{ \mb {\hat h} \in   {\hat{\cal U}}} \max \; \mb d^T \hat{\mb y}( \hat{\mb h})  \left( \frac{1}{2 \beta}  \sum_{i=1}^m \left( h_i -\beta v_i  \right)^+ +\frac{1}{2} \right)            \right)\\
& \leq 2 \left(   \mb c^{T}    \hat{\mb x}  +   \underset{ \mb {\hat h} \in   {\hat{\cal U}}} \max \; \mb d^T \hat{\mb y}( \hat{\mb h})            \right)  \\
& = 2  \cdot z_{\sf{AR}}(  {\hat{\cal U}}),
\end{align*}
where the second last inequality follows from part $a)$ of Lemma \ref{lem:ineq}. From Theorem \ref{thm:ineq}, $z_{\sf{AR}}(  {\hat{\cal U}})  \leq   \beta \cdot      z_{\sf{AR}}\left( {\cal U}  \right)$. Therefore, the cost of the piecewise affine policy for any $\mb h \in {\cal U}$
  $$          \mb c^{T} \mb x + \mb d^{T} \mb y ( \mb h )  \leq  2 \beta \cdot z_{\sf{AR}}\left( {\cal U}  \right), $$
\red{which implies that the piecewise affine solution \eqref{picewise_sl} gives a $ 2 \beta$-approximation for the adjustable robust problem, $\Pi_{\sf AR}({\cal U})$~\eqref{eqn:ar}}.
 \end{proof}

The above proof shows that it is sufficient to find $\beta$ and $\mb v \in {\cal U}$ satisfying  \eqref{ineq:beta-v} in Lemma \ref{lem:ineq} to construct a piecewise affine policy that gives a $2 \beta$-approximation for (1.1). In particular, we summarize the main result in the following theorem.

\begin{theorem} \label{thm:addi}
Let the uncertainty set ${\cal U}$ satisfy Assumption $1$. Consider any $\beta $ and $ \mb v \in {\cal U}$ satisfying~\eqref{ineq:beta-v}. Then, the piecewise affine solution in \eqref{picewise_sl} gives a $2\beta$-approximation for the adjustable robust problem, $\Pi_{\sf AR}({\cal U})$~\eqref{eqn:ar}.
\end{theorem}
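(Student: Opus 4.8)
The plan is to observe that Theorem~\ref{thm:addi} is nothing more than a re-reading of the proof of Theorem~\ref{thm:cost} in which the hypothesis ``$\hat{\cal U}$ dominates ${\cal U}$'' is replaced everywhere by the single inequality~\eqref{ineq:beta-v}. Concretely, I would set $\hat{\cal U} := \beta \cdot {\sf conv}(\mb e_1,\ldots,\mb e_m,\mb v)$, let $\hat{\mb x},\hat{\mb y}(\hat{\mb h})$ be an optimal solution of $\Pi_{\sf AR}(\hat{\cal U})$ (computable via the compact $(m{+}1)$-constraint LP of Section~\ref{sec:framework}, since $\hat{\cal U}$ has $m{+}1$ vertices), and build the policy~\eqref{picewise_sl} from it. The goal is then to re-check that feasibility and the cost bound both go through using only~\eqref{ineq:beta-v}.

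First I would verify feasibility. The chain of inequalities in the proof of Theorem~\ref{thm:cost} leading to $\mb A\mb x+\mb B\mb y(\mb h)\ge \beta\mb v+\sum_{i}(h_i-\beta v_i)^+\mb e_i\ge\mb h$ uses only: (i) $\hat{\mb x}\ge\mb 0$ and $\mb A\ge\mb 0$; (ii) feasibility of $(\hat{\mb x},\hat{\mb y})$ for $\Pi_{\sf AR}(\hat{\cal U})$ at the scenarios $\beta\mb e_i$ and $\beta\mb v$; and (iii) the bound $\tfrac1\beta\sum_i(h_i-\beta v_i)^+\le 1$, which is exactly~\eqref{ineq:beta-v}. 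None of these steps invokes the domination property of $\hat{\cal U}$, so the argument is reproduced verbatim.

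Next I would bound the cost. Again mirroring the proof of Theorem~\ref{thm:cost}, for every $\mb h\in{\cal U}$ one obtains $\mb c^T\mb x+\mb d^T\mb y(\mb h)\le 2\bigl(\mb c^T\hat{\mb x}+\max_{\hat{\mb h}\in\hat{\cal U}}\mb d^T\hat{\mb y}(\hat{\mb h})\bigr)=2\,z_{\sf AR}(\hat{\cal U})$, where the weighted-average coefficient $\tfrac1{2\beta}\sum_i(h_i-\beta v_i)^++\tfrac12\le 1$ is controlled by~\eqref{ineq:beta-v}, and where $\beta\mb e_i,\beta\mb v\in\hat{\cal U}$ justifies passing to the maximum over $\hat{\cal U}$. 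The one place where the proof of Theorem~\ref{thm:cost} quoted Theorem~\ref{thm:ineq}, namely $z_{\sf AR}(\hat{\cal U})\le\beta\,z_{\sf AR}({\cal U})$, I would obtain here \emph{without} that theorem, since under only~\eqref{ineq:beta-v} the simplex $\hat{\cal U}$ need not dominate ${\cal U}$ (Lemma~\ref{lem:ineq}(b) only guarantees that $2\hat{\cal U}$ does), so $\beta$ need not be the scaling factor. Instead: because $\mb e_1,\ldots,\mb e_m,\mb v\in{\cal U}$ and ${\cal U}$ is convex, ${\sf conv}(\mb e_1,\ldots,\mb e_m,\mb v)\subseteq{\cal U}$, hence $\hat{\cal U}\subseteq\beta{\cal U}$; monotonicity of $z_{\sf AR}(\cdot)$ in the uncertainty set together with the positive-homogeneity identity $z_{\sf AR}(\beta{\cal U})=\beta\,z_{\sf AR}({\cal U})$ (obtained by rescaling $(\mb x,\mb y)$ by $\beta$) then yields $z_{\sf AR}(\hat{\cal U})\le\beta\,z_{\sf AR}({\cal U})$. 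Combining, $\mb c^T\mb x+\mb d^T\mb y(\mb h)\le 2\beta\,z_{\sf AR}({\cal U})$ for all $\mb h\in{\cal U}$, which is the asserted $2\beta$-approximation.

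I do not expect a genuine obstacle; the only subtlety — and the reason this is stated as a separate theorem rather than folded into Theorem~\ref{thm:cost} — is precisely the point in the last paragraph: one must resist appealing to Theorem~\ref{thm:ineq}/the scaling factor and instead use the elementary containment $\hat{\cal U}\subseteq\beta{\cal U}$, since~\eqref{ineq:beta-v} by itself does not make $\hat{\cal U}$ a dominating set. Everything else is a line-by-line reuse of the proof of Theorem~\ref{thm:cost}.
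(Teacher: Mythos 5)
Your proposal is correct and follows essentially the paper's own route: the paper establishes Theorem~\ref{thm:addi} simply by observing that the proof of Theorem~\ref{thm:cost} (feasibility chain and cost bound alike) only ever invokes inequality~\eqref{ineq:beta-v}. Your one addition — justifying $z_{\sf AR}(\hat{\cal U})\le\beta\, z_{\sf AR}({\cal U})$ directly from the containment $\hat{\cal U}=\beta\cdot{\sf conv}(\mb e_1,\ldots,\mb e_m,\mb v)\subseteq\beta\cdot{\cal U}$ rather than by citing Theorem~\ref{thm:ineq}, whose domination hypothesis need not hold under~\eqref{ineq:beta-v} alone — is a legitimate and careful patch of a step the paper leaves implicit (and which its proof of Theorem~\ref{thm:ineq} in fact derives from exactly this containment).
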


We would like to note that our piecewise affine policy in not necessarily an optimal piecewise policy. However, for a large class of uncertainty sets, we show that our policy is significantly better than affine policy and can even be computed more efficiently than an affine policy.

%Moreover, for any $\beta$, $\mb v \in {\cal U}$ satisfying~\eqref{ineq:beta-v}, $2\beta \cdot {\sf conv } \left( \mb e_1,\ldots,\mb e_m, \mb v \right)  $ dominates ${\cal U} $ in this case. In particular, we have the following corollary.

%\begin{corollary} \label{cor:ineq}
%Consider an uncertainty set ${\cal U}$ satisfying Assumption $1$. Consider any $\beta $ and $ \mb v \in {\cal U}$ satisfying~\eqref{ineq:beta-v}. Then, the piecewise affine solution %in \eqref{picewise_sl} gives a $2\beta$-approximation for the adjustable robust problem, $\Pi_{\sf AR}({\cal U})$~\eqref{eqn:ar}. Moreover, $2\beta \cdot {\sf conv } \left( \mb e_1,\ldots,\mb e_m, \mb v \right)$ dominates ${\cal U} $.
%\end{corollary}
%We present the proof of Corollary 1 in Appendix  \ref{apx-proofs:cor:ineq}.

 %Note for general convex uncertainty sets, the performance of affine policy can be as bad as $O(\sqrt{m})$ \cite{BG10}.  Our goal is to ameliorate over this approximation. In particular, we would like to find a simplex $\hat{\cal U}$ of the form \eqref{eq:simplex} that dominates ${\cal U}$ with the smallest possible value of $\beta$, thereby giving a good approximation for $\Pi_{\sf AR}({\cal U})$~\eqref{eqn:ar}.

%Note that the piecewise affine solution that we are presenting in this paper is not the optimal piecewise affine solution but it is a " good" piecewise affine solution with a performance better than the affine solution. Moreover, it is easy to compute this piecewise solution once we know $\beta$ and $ \mb v $.

\section{\red{Performance Bounds for Scaled Permutation Invariant Sets}}~\label{sec:perm-inv}
\red{In this section, we present performance bounds of our policy for the class of scaled permutation invariant sets.  This class includes ellipsoids, weighted norm-balls, intersection of norm-balls and budget of uncertainty sets. These are widely used uncertainty sets in theory and in practice.}
\red{\begin{definition} {\bf Scaled Permutation Invariant Sets (SPI)}\label{perm}
\begin{enumerate}
\item ${\cal U}$ is a {\bf permutation invariant set} if $\mb{x}\in{\cal U}$ implies that for any permutation $\tau$ of $\{1,2,\ldots,m\}$, $\mb{x}^{\tau}\in{\cal U}$ where $x^\tau_i= x_{\tau(i)}$.
\item ${\cal U} $ is a {\bf scaled permutation invariant set} if there exists $\mb \lambda \in \mathbb{R}^m_+$ and $ {\cal V}$ a permutation invariant set such that $ {\cal U}= {\sf diag} (\mb \lambda) \cdot {\cal V}$
\end{enumerate}
\end{definition}
For a given SPI set ${\cal U}$, it is possible to scale the two-stage adjustable problem~\eqref{eqn:ar} and get a new problem where the uncertainty set is \emph{permutation invariant} (PI). Indeed, suppose $ {\cal U}= {\sf diag} (\mb \lambda) \cdot {\cal V}$ where $ {\cal V}$ is a permutation invariant set; by multiplying the constraint matrices $\mb A$ and $\mb B$ by ${\sf diag} (\mb \lambda) ^{-1}$, we get a new problem where the uncertainty set now is PI. The performance of our policy is not affected by this scaling. Therefore, without loss of generality, we consider in the rest of this section, the case of permutation invariant uncertainty sets.}

%In this section, we present the approximation bounds for several interesting families of permutation invariant sets. In particular, our bounds are better than the results of Bertsimas and Bidkhori~\cite{BB15} in many cases. Note that by scaling, we can assume without lost of generality that ${\cal U}\subseteq[0,1]^n$. In particular, we have $\mb{e}_j\in{\cal U}$ for all $j\in[m]$, where $\mb{e}_j$ is the unit vector with $j^{th}$ position being one and zero elsewhere.

%\begin{definition} {\bf (Coefficients $\gamma (.)$)}\label{def:gamma}
We first introduce some structural properties of PI sets. Let ${\cal U}$ be PI satisfying Assumption $1$. For all  $k={1,\ldots,m}$, let
\begin{equation} \label{eq:gamma(k)}
\gamma (k)=\frac{1}{k} \cdot \max \left\{ \sum_{i=1}^k h_i  \; \Big\vert\ \mb h \in {\cal U} \right\}.
\end{equation}
The coefficients, $\gamma(k)$ for all $k=1,\ldots,m$ affect the geometric structure of ${\cal U}$. In particular, we have the following lemma.
%Note that $\gamma (.)$ are geometric coefficients that describe the surface of ${\cal U}$. They verify the following lemma.

\begin{lemma} \label{lem:gamma(k)}
Le ${\cal U}$ be a permutation invariant set and $\gamma (\cdot)$ be as defined in \eqref{eq:gamma(k)}. Then,
$$ \gamma (k) \cdot \sum_{i=1}^k \mb e_i \in {\cal U}, \; \forall k=1,\ldots,m$$
\end{lemma}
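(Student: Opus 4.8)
The plan is to show that the point $\mb{w} := \gamma(k) \sum_{i=1}^k \mb{e}_i$ lies in ${\cal U}$ by exhibiting it as a convex combination of permutations of a maximizer in the definition of $\gamma(k)$, and then invoking permutation invariance and convexity of ${\cal U}$.

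First I would fix $k \in \{1,\ldots,m\}$ and let $\mb{h}^*$ be an optimal solution of the linear program $\max\{\sum_{i=1}^k h_i \mid \mb{h} \in {\cal U}\}$ defining $\gamma(k)$, so that $\sum_{i=1}^k h^*_i = k\,\gamma(k)$. The natural idea is to average $\mb{h}^*$ over a suitable family of permutations so that the averaged vector has all of its first $k$ coordinates equal. Concretely, I would consider all $m!$ permutations $\tau$ of $\{1,\ldots,m\}$, and for each one form $(\mb{h}^*)^{\tau}$, which lies in ${\cal U}$ by permutation invariance. Averaging, $\bar{\mb{h}} := \frac{1}{m!}\sum_{\tau}(\mb{h}^*)^{\tau}$ is in ${\cal U}$ by convexity, and by symmetry every coordinate of $\bar{\mb{h}}$ equals $\frac{1}{m}\sum_{j=1}^m h^*_j$. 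This averages over all coordinates, not just the top $k$, which is not quite what I want; instead I would average only over permutations that map $\{1,\ldots,k\}$ into itself in all possible ways while fixing the behaviour on the complement — i.e. over the subgroup $S_k \times S_{m-k}$. Under this averaging, each of the first $k$ coordinates of the resulting vector $\tilde{\mb{h}}$ equals $\frac{1}{k}\sum_{i=1}^k h^*_i = \gamma(k)$, and $\tilde{\mb{h}} \in {\cal U}$ by permutation invariance and convexity.

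Then $\tilde{\mb{h}}$ has its first $k$ coordinates equal to $\gamma(k)$ and its remaining coordinates equal to some nonnegative values (averages of the $h^*_j$ for $j>k$), so $\tilde{\mb{h}} \geq \gamma(k)\sum_{i=1}^k \mb{e}_i \geq \mb{0}$. By the down-monotonicity part of Assumption 1, any vector $\mb{h}'$ with $\mb{0} \leq \mb{h}' \leq \tilde{\mb{h}}$ is in ${\cal U}$; applying this to $\mb{h}' = \gamma(k)\sum_{i=1}^k \mb{e}_i$ gives the claim.

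I expect the only real subtlety is making sure the averaging is done over the right group so that exactly the first $k$ coordinates become equal to $\gamma(k)$ (rather than all $m$ coordinates becoming equal to a smaller average), and confirming that the leftover coordinates stay nonnegative — both of which are immediate once the $S_k \times S_{m-k}$ averaging is set up. An alternative, slicker route avoiding any group-averaging bookkeeping: directly write $\gamma(k)\sum_{i=1}^k\mb{e}_i$ as the average $\frac{1}{\binom{m}{k}}$ (or a suitable weighted average) of the $\binom{m}{k}$ vectors obtained by placing the ``mass pattern'' of $\mb{h}^*$'s top-$k$ coordinates on each $k$-subset — but one must be careful that $\mb{h}^*$ itself need not be supported on $k$ coordinates, so the cleanest statement really does go through down-monotonicity as above: first produce \emph{some} point of ${\cal U}$ dominating $\gamma(k)\sum_{i=1}^k\mb{e}_i$, then conclude by Assumption 1.
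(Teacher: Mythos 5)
Your proof is correct and is essentially the paper's argument: both symmetrize a maximizer of $\sum_{i=1}^k h_i$ over the permutations of the first $k$ coordinates (using permutation invariance plus convexity) and invoke down-monotonicity from Assumption 1 to dispose of the remaining coordinates — the paper simply zeroes out the tail first (``WLOG $h_i=0$ for $i>k$'') and then averages over $S_k$, whereas you average over $S_k\times S_{m-k}$ and apply down-monotonicity at the end. The two orderings are interchangeable and the proof is complete.
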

We present the proof of Lemma~\ref{lem:gamma(k)} in Appendix \ref{apx-proofs:em:gamma(k)}. For the sake of simplicity, we denote $ \gamma (m)$ by $ \gamma $ in the rest of the paper. From the above lemma, we know that $\gamma \cdot \mb{e} \in {\cal U}$.
%%%%
%We defer the proof of Lemma \ref{lem:gamma(k)} to the full version of the paper.
%Lets us consider ${\cal U}$ a permutation invariant set. Then,  $\arg \max\{\mb{e}^T\mb{x}\;|\;\mb{x}\in {\cal U}\}$ has all components equal. Hence, we can define the coefficient $\gamma({\cal U})$ where
%\begin{equation}\label{gamma}
%\gamma({\cal U})\mb{e}=\arg\max\{\mb{e}^T\mb{x}\;|\;\mb{x}\in {\cal U}\}.
%\end{equation}
%For a sake of simplicity, we denote $\gamma({\cal U})$ by $\gamma$.

\subsection{Piecewise affine policy for Permutations Invariant Sets}
For any PI set ${\cal U}$, we consider the following dominating uncertainty set, $\hat{\cal U}$ of the form~\eqref{eq:simplex} with $\mb v = \gamma \mb{e}$, i.e.,
\begin{equation} \label{eq:simplex:perm}
\hat{\cal U} = \beta\cdot{\sf conv}\left(\mb{e}_1,\mb{e}_2, \ldots,\mb{e}_m, \gamma \mb{e}\right)
\end{equation}
where $\beta$ is the scaling factor guaranteeing that $\hat{\cal U}$ dominates ${\cal U}$. This dominating set $\hat{\cal U}$ is motivated by the symmetry of the permutation invariant set $\cal U$. In this section, we show that one can efficiently compute the minimum $\beta$ such that $\hat{\cal U}$ in~\eqref{eq:simplex:perm} dominates ${\cal U}$. In particular, we derive an efficiently computable closed-form expression for $\beta$, for any PI set ${\cal U}$.

%We take the vectors $\mb{e}_j$ and we choose the (m+1)-th point to be the maximum point of $ \cal U$  in the direction $ \mb e$. Then we scale the convex hull of those points by an appropriate factor $\beta$ to dominate ${\cal U}$.
%In particular, we would like to find the minimum $\beta $ such that $\hat{\cal U}$ dominates ${\cal U}$ and consequently construct our piecewise affine policy with $2\beta$-approximation.
From Theorem \ref{thm:addi} we know that to construct a piecewise affine policy with an approximation bound of $2 \beta$, it is sufficient to find $\beta$ such that
\begin{equation}\label{eq:ineq:perm}
 \frac{1}{\beta} \max _{ \mb h \in {\cal U }}  \sum_{i=1}^m \left( h_i -\beta \gamma  \right)^{+} \leq 1
\end{equation}
\red{and any $\beta$ implies that $2\beta\cdot{\sf conv}\left(\mb{e}_1,\mb{e}_2, \ldots,\mb{e}_m, \gamma \mb{e}\right)$ dominates ${\cal U}$ (see Lemma \ref{lem:ineq}b)}. Finding the minimum $\beta$ that satisfies \eqref{eq:ineq:perm} requires solving:
\begin{equation}\label{eq:beta}
\min \left\{ \beta \geq 1 \; \Big\vert \;   \frac{1}{\beta} \max _{ \mb h \in {\cal U }}  \sum_{i=1}^m \left( h_i -\beta \gamma \right)^{+} \leq 1   \right\}.
\end{equation}
The following lemma characterizes the structure of the optimal solution for the maximization problem in~\eqref{eq:ineq:perm} for a fixed $\beta$. %We present the following structural result to solve the maximization problem in \eqref{eq:ineq:perm} and then present our efficient procedure to compute $\beta$ in  Theorem \ref{thm:beta}.
\begin{lemma} \label{lem:struct:sol}
Consider the maximization problem in \eqref{eq:ineq:perm} for a fixed $\beta$. There exists an optimal solution $\mb h^*$ such that
\[ \mb h^* = \gamma(k) \cdot \sum_{i=1}^k \mb e_i,\]
for some $k=1,\ldots,m$.
\end{lemma}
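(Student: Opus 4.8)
The plan is to show that the maximization problem $\max_{\mb h \in {\cal U}} \sum_{i=1}^m (h_i - \beta\gamma)^+$ always admits an optimal solution of the special form $\gamma(k)\sum_{i=1}^k \mb e_i$ for some $k$. The key observation is that the objective $f(\mb h) = \sum_{i=1}^m (h_i - \beta\gamma)^+$ is convex in $\mb h$, so it attains its maximum over the compact convex set ${\cal U}$ at an extreme point; but more usefully, I will exploit the permutation invariance of ${\cal U}$ together with the structure of $f$, which only depends on the multiset of coordinate values (indeed $f$ is itself permutation invariant).

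First I would argue, via permutation-invariance of both ${\cal U}$ and $f$, that we may restrict attention to an optimal $\mb h^*$ whose coordinates are sorted in non-increasing order, $h_1^* \ge h_2^* \ge \cdots \ge h_m^*$. Let $k$ be the number of coordinates of $\mb h^*$ that are strictly greater than $\beta\gamma$ (if none, then $f(\mb h^*) = 0$ and any choice works since $\gamma(1)\cdot\mb e_1$ also gives value $0$ or the trivial maximum is $0$; I would handle this degenerate case separately and briefly). Then $f(\mb h^*) = \sum_{i=1}^k h_i^* - k\beta\gamma$. Now the crucial step: among all $\mb h \in {\cal U}$ with the property that the objective equals $\sum_{i=1}^k h_i - k\beta\gamma$, i.e., with at most $k$ coordinates exceeding $\beta\gamma$ — actually I would phrase it more directly — observe that $f(\mb h) \le \left(\sum_{i=1}^k h_{[i]}\right) - k\beta\gamma \cdot \mathbf{1}[\cdots]$ is not quite clean, so instead: for the fixed $k$ determined above, $f(\mb h^*) = \sum_{i=1}^k h_i^* - k\beta\gamma \le \max\{\sum_{i=1}^k h_i \mid \mb h \in {\cal U}\} - k\beta\gamma = k\gamma(k) - k\beta\gamma$ by the definition \eqref{eq:gamma(k)} of $\gamma(k)$.

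Next I would show this upper bound is attained by the candidate solution $\mb h = \gamma(k)\sum_{i=1}^k \mb e_i$, which lies in ${\cal U}$ by Lemma~\ref{lem:gamma(k)}. Evaluating $f$ on this point: each of the first $k$ coordinates equals $\gamma(k)$ and the rest are $0$. I need $\gamma(k) \ge \beta\gamma$ for the contribution to be $k(\gamma(k) - \beta\gamma)$; this should follow because $h_i^* > \beta\gamma$ for $i \le k$ forces $\sum_{i=1}^k h_i^* > k\beta\gamma$, hence $\gamma(k) \ge \frac{1}{k}\sum_{i=1}^k h_i^* > \beta\gamma$ (using $\mb h^* \in {\cal U}$ and the definition of $\gamma(k)$ as a maximum). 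Therefore $f\left(\gamma(k)\sum_{i=1}^k\mb e_i\right) = k(\gamma(k) - \beta\gamma) \ge f(\mb h^*)$, and since $\mb h^*$ is optimal, equality holds, so $\gamma(k)\sum_{i=1}^k\mb e_i$ is an optimal solution of the required form.

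**The main obstacle** I anticipate is the bookkeeping around the degenerate case where the maximum value is $0$ (no coordinate of any point in ${\cal U}$ exceeds $\beta\gamma$, or $\beta$ is large enough that the constraint is slack), and making sure the chain of inequalities $k\beta\gamma < \sum_{i=1}^k h_i^* \le k\gamma(k)$ is deployed correctly to certify $\gamma(k) > \beta\gamma$ so that all $k$ terms in $f\left(\gamma(k)\sum_{i=1}^k \mb e_i\right)$ are genuinely positive and contribute. A secondary subtlety is justifying the reduction to sorted coordinates: since $f(\mb h^\tau) = f(\mb h)$ for every permutation $\tau$ and $\mb h^\tau \in {\cal U}$ whenever $\mb h \in {\cal U}$, the sorted rearrangement of any optimal $\mb h^*$ is again optimal and lies in ${\cal U}$ — this is routine but worth stating cleanly. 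None of these steps requires heavy computation; the argument is essentially a rearrangement/majorization observation combined with Lemma~\ref{lem:gamma(k)} and the definition of $\gamma(k)$.
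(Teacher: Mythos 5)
Your proposal is correct and follows essentially the same route as the paper's proof: take an optimal point, permute so that the $k$ coordinates exceeding $\beta\gamma$ come first, bound $\sum_{i=1}^k h_i^* - k\beta\gamma$ by $k\gamma(k)-k\beta\gamma$ via the definition of $\gamma(k)$, and certify that $\gamma(k)\sum_{i=1}^k \mb e_i \in {\cal U}$ by Lemma~\ref{lem:gamma(k)} attains this bound (the paper sidesteps the explicit check $\gamma(k)>\beta\gamma$ at that point by using $x \le x^+$, but then notes it follows from the same chain, exactly as you do). No gaps worth flagging beyond the degenerate $k=0$ case, which you already identify and which the paper also leaves implicit.
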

We present the proof of Lemma \ref{lem:struct:sol} in Appendix \ref{apx-proofs:lem:struct:sol}. The following lemma characterizes the optimal $\beta$ for~\eqref{eq:beta}.

\begin{lemma}\label{thm:beta}
\red{Let ${\cal U}$ be  a permutation invariant uncertainty set satisfying Assumption $1$}. Then the optimal solution for~\eqref{eq:beta} is given by
\begin{equation}\label{eq:sol:beta}
\beta  = \max _{ k=1,\ldots,m}  \frac{\gamma(k)}{\gamma +\frac{1}{k}}.
\end{equation}
\end{lemma}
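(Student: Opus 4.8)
The plan is to reduce the optimization over $\beta$ to a one-dimensional problem by exploiting Lemma~\ref{lem:struct:sol}, and then to explicitly solve the resulting scalar inequality. First I would fix $\beta \geq 1$ and use Lemma~\ref{lem:struct:sol}, which tells us that the inner maximization $\max_{\mb h \in {\cal U}} \sum_{i=1}^m (h_i - \beta\gamma)^+$ is attained at a point of the form $\mb h^* = \gamma(k)\cdot\sum_{i=1}^k \mb e_i$ for some $k \in \{1,\dots,m\}$. For such a point the sum becomes $\sum_{i=1}^k(\gamma(k) - \beta\gamma)^+ = k\,(\gamma(k) - \beta\gamma)^+$, so the constraint in~\eqref{eq:beta} becomes
\[
\max_{k=1,\dots,m}\; \frac{k}{\beta}\,\bigl(\gamma(k)-\beta\gamma\bigr)^+ \;\leq\; 1.
\]

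Next I would unpack this maximum of a maximum. The constraint holds if and only if for every $k$ we have $\tfrac{k}{\beta}(\gamma(k)-\beta\gamma)^+ \leq 1$. If $\gamma(k) \leq \beta\gamma$ the term is zero and the inequality is automatic; otherwise it reads $k\gamma(k) - k\beta\gamma \leq \beta$, i.e. $k\gamma(k) \leq \beta(1 + k\gamma)$, i.e. $\beta \geq \frac{\gamma(k)}{\gamma + 1/k}$. (One should check that $\frac{\gamma(k)}{\gamma+1/k}\ge 1$, so that this dominates the side constraint $\beta\ge 1$; this follows since $\mb e_1\in{\cal U}$ gives $\gamma(1)\ge 1$ hence $\gamma(1)/(\gamma+1)\ge 1/(\gamma+1)$... — actually the cleaner route is to note $k\gamma(k)\ge 1$ because $\mb e_1\in{\cal U}$ implies $\max\sum_{i=1}^k h_i\ge 1$, while $1+k\gamma\le 2k\gamma(k)$ is not immediate; I would instead simply observe that the feasible set of~\eqref{eq:beta} is nonempty and closed from above and take $\beta$ as the stated maximum, then verify feasibility directly.) Taking the tightest such bound over all $k$ — noting the bound is vacuous precisely when $\gamma(k)\le\beta\gamma$, which only makes the requirement weaker — shows that the set of feasible $\beta$ is exactly $\{\beta : \beta \geq \max_{k}\frac{\gamma(k)}{\gamma+1/k}\}$ intersected with $[1,\infty)$, and hence the minimum is $\beta = \max_{k=1,\dots,m}\frac{\gamma(k)}{\gamma+1/k}$.

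The one subtlety I would be careful about is the interaction between the ``$(\cdot)^+$'' truncation and the choice of the maximizing $k$: for a candidate value $\beta$, the index $k$ achieving the outer max in Lemma~\ref{lem:struct:sol} may depend on $\beta$, and terms with $\gamma(k)\le\beta\gamma$ drop out. The key observation that makes this harmless is monotonicity: if $\beta$ satisfies $\beta\ge\frac{\gamma(k)}{\gamma+1/k}$ for the indices $k$ with $\gamma(k)>\beta\gamma$, it trivially satisfies all the constraints (the others being void), so the feasible region is an upward-closed ray and its infimum is the claimed maximum. I expect this bookkeeping around which constraints are active — rather than any hard inequality — to be the only real obstacle; everything else is substitution of the extremal point from Lemma~\ref{lem:struct:sol} and elementary algebra. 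Finally I would confirm that the stated $\beta$ indeed satisfies $\beta\ge 1$, using $\mb e_1\in{\cal U}$ (so $\gamma(1)\ge 1$) together with $\gamma=\gamma(m)\le 1$ from Assumption~1 (since ${\cal U}\subseteq[0,1]^m$), which gives $\frac{\gamma(1)}{\gamma+1}\ge\frac{1}{2}$ — not quite $1$, so instead I would use that $\gamma(k)/(\gamma+1/k)\ge 1$ for $k=m$: $\gamma(m)/(\gamma+1/m)=\gamma/(\gamma+1/m)$, still below $1$. Hence the correct argument is simply that~\eqref{eq:beta} is a genuine minimization whose constraint, rewritten as above, is equivalent to $\beta\ge\max_k\frac{\gamma(k)}{\gamma+1/k}$ together with $\beta\ge 1$, and the former may already exceed or fall below $1$; the optimal value is therefore $\max\bigl(1,\max_k\frac{\gamma(k)}{\gamma+1/k}\bigr)$, which the lemma asserts equals $\max_k\frac{\gamma(k)}{\gamma+1/k}$ — so the last step is to verify this maximum is at least $1$, which I would do by exhibiting a single good $k$ or by appealing to the fact that $\beta\ge 1$ is forced anyway by the domination requirement and is consistent with the formula.
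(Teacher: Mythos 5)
Your proof takes essentially the same route as the paper's: invoke Lemma~\ref{lem:struct:sol} to replace the inner maximization over ${\cal U}$ by a maximum over $k\in\{1,\dots,m\}$ of $k\left(\gamma(k)-\beta\gamma\right)^+$, note that each resulting constraint is either vacuous or equivalent to $\beta\ge \gamma(k)/(\gamma+\tfrac1k)$, and conclude that the feasible region of~\eqref{eq:beta} is an upward-closed ray whose left endpoint is the stated maximum. Your bookkeeping around the positive part (observing that whenever $\gamma(k)\le\beta\gamma$ the inequality $\beta\ge\gamma(k)/(\gamma+\tfrac1k)$ holds automatically, so the truncation never changes the feasible set) is correct and in fact more explicit than the paper's, which simply drops the $(\cdot)^+$ in its reformulation.

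The one loose end you flag --- whether $\max_k \gamma(k)/(\gamma+\tfrac1k)\ge 1$, so that the explicit side constraint $\beta\ge 1$ in~\eqref{eq:beta} is inactive --- is a legitimate concern, and your suspicion that it cannot be verified in general is right: for ${\cal U}=[0,1]^m$ one has $\gamma(k)=\gamma=1$ for all $k$, so the maximum equals $m/(m+1)<1$, and the true optimal value of~\eqref{eq:beta} is $\max\bigl(1,\max_k \gamma(k)/(\gamma+\tfrac1k)\bigr)$ rather than the bare maximum. The paper's own proof silently discards the constraint $\beta\ge 1$ in its final step, so this is an imprecision in the lemma statement shared by the paper, not a defect introduced by your argument; it is harmless for the applications (the performance guarantee $2\beta$ only improves if $\beta$ is floored at $1$, and for the sets analyzed in Section 3.2 the formula exceeds $1$ once $m$ is moderately large). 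Apart from this shared edge case, your derivation is complete and correct.
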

\begin{proof}
Using Lemma~\ref{lem:struct:sol}, we can reformulate~\eqref{eq:beta} as follows.
\begin{align*}
  \min \left\{ \beta \geq 1 \; \Big\vert \;   \frac{1}{\beta} \max _{ k=1,\ldots,m    }  \sum_{i=1}^k \left( \gamma(k) -\beta \gamma \right) \leq 1   \right\},
\end{align*}
i.e.,
\begin{align*}
 \min \left\{ \beta \geq 1 \; \Big\vert \;   \beta \geq \frac{\gamma(k)}{\gamma +\frac{1}{k}}, \; \; \forall k=1,\ldots,m \right\}.
 \end{align*}
Therefore,
$$\beta  = \max _{ k=1,\ldots,m}  \frac{\gamma(k)}{\gamma  +\frac{1}{k}}.  $$
 \end{proof}

The above lemma computes the minimum $\beta$ that satisfies~\eqref{eq:ineq:perm}. Therefore, from Theorem~\ref{thm:addi}, we have the following theorem.
\begin{theorem}\label{thm:perm-inv-beta}
Let ${\cal U}$ be a permutation invariant set satisfying Assumption 1. Let $\gamma = \gamma(m)$ be as defined in~\eqref{eq:gamma(k)} and $\beta$ be as defined in~\eqref{eq:sol:beta}, and
\[ \hat{\cal U} = \beta \cdot {\sf conv } \left( \mb e_1,\ldots,\mb e_m, \gamma \mb e \right).\]
Let $\mb{\hat{x}}, \mb{\hat{y}(\hat{h})}$ for $\mb{\hat{h}} \in \hat{\cal U }$ be an optimal solution for $\Pi_{\sf AR}(\hat{\cal U})$~\eqref{eqn:ar}. Then the following piecewise affine solution
\begin{equation} \label{perm_policy}
\begin{split}
 \mb x  & =  2 \hat{\mb x}  \\
 \mb y( \mb h )  & = \frac{1}{\beta}\sum_{i=1}^m \left( h_i -\beta \gamma \right)^+ \hat{\mb y }(  \beta \mb e_i)+ \hat{\mb y }( \beta \gamma \mb e) \qquad  \qquad \forall \mb h \in \cal U,
\end{split}
\end{equation}
gives a $2 \beta$-approximation for $\Pi_{\sf AR}({\cal U})$ \eqref{eqn:ar}. Moreover, the set $2 \cdot \hat{\cal U}$ dominates ${\cal U}$.
\end{theorem}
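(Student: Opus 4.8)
The plan is to obtain this theorem as a direct consolidation of Theorem~\ref{thm:addi}, Lemma~\ref{thm:beta}, Lemma~\ref{lem:gamma(k)}, and part~b) of Lemma~\ref{lem:ineq}; no new estimates are needed, only the identification of the permutation-invariant construction with the general framework of Section~\ref{sec:framework}.

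First I would check that the vector $\mb v = \gamma \mb e$ used to build $\hat{\cal U}$ actually lies in ${\cal U}$: this is exactly the $k = m$ instance of Lemma~\ref{lem:gamma(k)}, which gives $\gamma(k)\sum_{i=1}^k \mb e_i \in {\cal U}$ for every $k$, hence $\gamma \mb e = \gamma(m) \mb e \in {\cal U}$. Thus $\hat{\cal U} = \beta \cdot {\sf conv}(\mb e_1,\ldots,\mb e_m,\mb v)$ is a legitimate member of the simplex family~\eqref{eq:simplex} with $\mb v \in {\cal U}$. Next, I would observe that with $\mb v = \gamma \mb e$ the condition~\eqref{ineq:beta-v} of Lemma~\ref{lem:ineq} reads $\frac{1}{\beta}\sum_{i=1}^m (h_i - \beta \gamma)^+ \le 1$ for all $\mb h \in {\cal U}$, which is precisely~\eqref{eq:ineq:perm}. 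Lemma~\ref{thm:beta} (via the structural reduction in Lemma~\ref{lem:struct:sol} and the reformulation of~\eqref{eq:beta}) shows that the smallest $\beta \ge 1$ satisfying this is $\beta = \max_{k=1,\ldots,m} \frac{\gamma(k)}{\gamma + 1/k}$, i.e.\ exactly the $\beta$ in~\eqref{eq:sol:beta}; in particular this $\beta$ and $\mb v = \gamma \mb e$ satisfy~\eqref{ineq:beta-v}.

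With that identification in hand, both conclusions follow immediately. For the approximation guarantee, I would apply Theorem~\ref{thm:addi} with this pair $(\beta, \mb v = \gamma \mb e)$ satisfying~\eqref{ineq:beta-v}: the piecewise affine solution~\eqref{picewise_sl} is feasible and $2\beta$-approximate for $\Pi_{\sf AR}({\cal U})$, and substituting $\mb v = \gamma \mb e$ into~\eqref{picewise_sl} yields verbatim the policy~\eqref{perm_policy}. For the domination claim, I would invoke part~b) of Lemma~\ref{lem:ineq} with $\mb v = \gamma \mb e$: since $\beta$ satisfies~\eqref{ineq:beta-v}, the set $2\beta \cdot {\sf conv}(\mb e_1,\ldots,\mb e_m,\gamma \mb e) = 2 \cdot \hat{\cal U}$ dominates ${\cal U}$. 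The only point requiring care is bookkeeping rather than mathematics — namely verifying that the closed form~\eqref{eq:sol:beta} produced by Lemma~\ref{thm:beta} is exactly the quantity needed in~\eqref{ineq:beta-v} and that~\eqref{perm_policy} is literally~\eqref{picewise_sl} under $\mb v = \gamma \mb e$ — so I do not anticipate a genuine obstacle; the substantive work was already done in the preceding lemmas.
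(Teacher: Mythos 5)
Your proposal is correct and follows essentially the same route as the paper, which derives this theorem directly from Theorem~\ref{thm:addi} (for the $2\beta$-approximation) and part~b) of Lemma~\ref{lem:ineq} (for the domination of $2\cdot\hat{\cal U}$), with Lemma~\ref{thm:beta} supplying the closed-form $\beta$ satisfying~\eqref{ineq:beta-v} for $\mb v = \gamma\mb e$. Your explicit check that $\gamma\mb e \in {\cal U}$ via Lemma~\ref{lem:gamma(k)} is a useful piece of bookkeeping that the paper leaves implicit.
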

The last claim that $2 \cdot \hat{\cal U}$ dominates ${\cal U}$ is a straightforward consequence of part$ (b)$ of Lemma \ref{lem:ineq}.

As a consequence of Theorem \ref{thm:perm-inv-beta}, for any permutation invariant uncertainty set, ${\cal U}$, we can compute the piecewise-affine policy for $\Pi_{\sf AR}({\cal U})$\eqref{eqn:ar} efficiently. In fact, for many cases, even more efficiently than an affine policy.

\subsection{Examples}

We present the approximation bounds for several permutation invariant uncertainty sets that are commonly used in the literature and in practice, including norm balls, intersection of norm balls and budget of uncertainty sets. In particular, it follows that for these sets, the performance bounds of our piecewise affine policy are significantly better than the best known performance bounds for affine policy.
%We would like to note that Table~\ref{tab:results} shows the best known bounds for affine policy. We would also like to emphasize that our piecewise affine policy can be computed even efficiently that an affine policy over $\cal{ U}$.

%Note that Lemma~\ref{thm:beta} gives an efficiently computable approximation bound for any given permutation invariant set. However, it may not allow to compute explicitly an easy functional form to express the approximation bounds. In this section, we present the approximation bounds for the hypersphere uncertainty set and for general p-norm balls by directly applying Theorem \ref{thm:beta}. For other uncertainty sets, like the ellipsoid, intersection of norm-balls and budget of uncertainty sets, we do not attempt directly to compute the coefficients $\gamma(.)$ and apply Theorem \ref{thm:beta} to find $\beta$, but we compute a close form for $\beta$ differently.

\begin{prop} {\bf (Hypersphere)} \label{prop:sphere}
Consider the uncertainty set ${\cal U}=\{\mb{h}\in{\mathbb R}^{m}_+\;|\;||\mb{h}||_2 \leq 1\}$ which is the intersection of the unit hypersphere and the nonnegative orthant. Then,
\[\hat{\cal U}=m^{\frac{1}{4}} \cdot{\sf conv}\left(\mb{e}_1, \mb{e}_2,\ldots, \mb{e}_m, \frac{1}{\sqrt{m}}\mb{e} \right),\]
dominates ${\cal U}$ and our piecewise affine solution~\eqref{perm_policy} gives $O  ( m^{\frac{1}{4}} )$ approximation to $\eqref{eqn:ar}$.
\end{prop}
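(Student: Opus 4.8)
The plan is to apply Theorem~\ref{thm:perm-inv-beta} to the permutation invariant set ${\cal U}=\{\mb h \in {\mathbb R}^m_+ \mid \|\mb h\|_2 \le 1\}$, so the whole task reduces to computing $\gamma(k)$ for all $k$ and then bounding the resulting $\beta = \max_k \gamma(k)/(\gamma + 1/k)$. First I would compute $\gamma(k)$ explicitly: by definition $\gamma(k) = \tfrac1k \max\{\sum_{i=1}^k h_i \mid \|\mb h\|_2 \le 1,\ \mb h \ge 0\}$, and by Cauchy--Schwarz the maximizer of $\sum_{i=1}^k h_i$ subject to $\|\mb h\|_2 \le 1$ is $h_i = 1/\sqrt k$ for $i \le k$ and $0$ otherwise, giving $\sum_{i=1}^k h_i = \sqrt k$. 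Hence $\gamma(k) = \sqrt k / k = 1/\sqrt k$, and in particular $\gamma = \gamma(m) = 1/\sqrt m$. This immediately identifies $\mb v = \gamma \mb e = \tfrac{1}{\sqrt m}\mb e$, matching the claimed form of $\hat{\cal U}$.

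Next I would plug these values into the closed-form expression~\eqref{eq:sol:beta}. We get
\[
\beta = \max_{k=1,\ldots,m} \frac{\gamma(k)}{\gamma + 1/k} = \max_{k=1,\ldots,m} \frac{1/\sqrt k}{1/\sqrt m + 1/k} = \max_{k=1,\ldots,m} \frac{\sqrt m \, k}{k + \sqrt m \sqrt k}\cdot\frac{1}{\sqrt m\sqrt k}\,,
\]
or more cleanly, writing $f(k) = \frac{1/\sqrt k}{1/\sqrt m + 1/k}$, one simplifies $f(k) = \frac{\sqrt k}{\sqrt{k}/\sqrt m + 1} = \frac{\sqrt k \sqrt m}{\sqrt k + \sqrt m}$. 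I would then maximize $g(t) = \frac{t\sqrt m}{t + \sqrt m}$ over $t = \sqrt k \in [1,\sqrt m]$; since $g$ is increasing in $t$, the maximum is at $t = \sqrt m$, i.e. $k = m$, giving $\beta = \frac{\sqrt m \cdot \sqrt m}{\sqrt m + \sqrt m} = \frac{m}{2\sqrt m} = \frac{\sqrt m}{2}$. That would be wrong against the claimed $O(m^{1/4})$, so the real computation must keep $\beta \ge 1$ as a genuine constraint: note $\gamma(k)/(\gamma+1/k) \le \gamma(k) k = \sqrt k$ is not the right simplification — instead $f(k) = \frac{\sqrt{k}\sqrt m}{\sqrt k + \sqrt m}$, and since $\sqrt k + \sqrt m \ge 2 (k m)^{1/4}$ by AM--GM, we get $f(k) \le \frac{(km)^{1/2}}{2(km)^{1/4}} = \tfrac12 (km)^{1/4} \le \tfrac12 m^{1/2} \cdot$… — so the honest bound from this route is $f(k) \le \tfrac12(km)^{1/4} \le \tfrac12 m^{1/2}$, not $m^{1/4}$. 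Hence the dominating simplex in the Proposition is \emph{not} the one from Theorem~\ref{thm:perm-inv-beta} with the optimal $\beta$; rather, the statement fixes $\beta = m^{1/4}$ directly and only claims domination (via $2\cdot\hat{\cal U}$) plus the resulting $O(m^{1/4})$ bound.

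So the cleanest route — and the one I would actually write — is to bypass the optimal-$\beta$ formula and instead verify inequality~\eqref{ineq:beta-v} directly for the specific choice $\beta = m^{1/4}$, $\mb v = \tfrac{1}{\sqrt m}\mb e$, then invoke Theorem~\ref{thm:addi}. Concretely, I must show for every $\mb h \in {\cal U}$ that $\tfrac{1}{\beta}\sum_{i=1}^m (h_i - \beta\gamma)^+ \le 1$, i.e. $\sum_{i=1}^m (h_i - m^{1/4}/\sqrt m)^+ \le m^{1/4}$, where the threshold is $m^{1/4}/\sqrt m = m^{-1/4}$. Let $S = \{i : h_i > m^{-1/4}\}$ and $s = |S|$. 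Since $\sum_i h_i^2 \le 1$ and each $h_i > m^{-1/4}$ for $i \in S$, we get $s \cdot m^{-1/2} < 1$, so $s < \sqrt m$. Then $\sum_{i \in S}(h_i - m^{-1/4}) \le \sum_{i\in S} h_i \le \sqrt{s}\,\|\mb h\|_2 \le \sqrt s \le m^{1/4}$ by Cauchy--Schwarz. This establishes~\eqref{ineq:beta-v}, so by Theorem~\ref{thm:addi} the policy~\eqref{picewise_sl} (which specializes to~\eqref{perm_policy}) is feasible and gives a $2\beta = 2m^{1/4} = O(m^{1/4})$-approximation; and by part~(b) of Lemma~\ref{lem:ineq}, $2\hat{\cal U}$ dominates ${\cal U}$ (and in fact one can check $\hat{\cal U}$ itself dominates ${\cal U}$ using the above argument with the stated $\beta$, since the slack in the Cauchy--Schwarz step is comfortable). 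The main obstacle is purely the bookkeeping of choosing the right threshold count argument — picking $\beta = m^{1/4}$ so that the number of ``large'' coordinates is $O(\sqrt m)$ \emph{and} each contributes $O(m^{-1/4})$ after thresholding, balancing to $O(m^{1/4})$ — rather than anything deep.
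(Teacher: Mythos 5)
Your final, direct verification of inequality~\eqref{ineq:beta-v} is correct and does yield the $O(m^{1/4})$ approximation, but the route you took to get there rests on an algebra error, and the error leads you to make a false claim about the paper's method. You simplified $f(k)=\frac{1/\sqrt{k}}{1/\sqrt{m}+1/k}$ to $\frac{\sqrt{k}\sqrt{m}}{\sqrt{k}+\sqrt{m}}$; multiplying numerator and denominator by $k$ actually gives $f(k)=\frac{\sqrt{k}}{1+k/\sqrt{m}}=\frac{\sqrt{k}\sqrt{m}}{\sqrt{m}+k}$. This function is \emph{not} monotone increasing: setting $t=\sqrt{k}$ it equals $\frac{t\sqrt{m}}{t^2+\sqrt{m}}$, which is maximized at $t^2=\sqrt{m}$, i.e.\ $k=\sqrt{m}$, with value $\beta=\tfrac{1}{2}m^{1/4}$. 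So the closed-form formula~\eqref{eq:sol:beta} from Lemma~\ref{thm:beta} gives exactly $\beta=m^{1/4}/2$, Theorem~\ref{thm:perm-inv-beta} then says $2\hat{\cal U}'=m^{1/4}\cdot{\sf conv}(\mb e_1,\dots,\mb e_m,\tfrac{1}{\sqrt{m}}\mb e)$ dominates ${\cal U}$ and the policy is a $2\beta=m^{1/4}$-approximation. This is precisely the paper's proof, and your assertion that ``the dominating simplex in the Proposition is \emph{not} the one from Theorem~\ref{thm:perm-inv-beta} with the optimal $\beta$'' is wrong — it is exactly that simplex.

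Your fallback argument (set $\beta=m^{1/4}$, let $S=\{i:h_i>m^{-1/4}\}$, bound $|S|<\sqrt{m}$ from $\|\mb h\|_2\le 1$, then Cauchy--Schwarz gives $\sum_{i\in S}h_i\le\sqrt{|S|}\le m^{1/4}$) is a valid, self-contained verification of~\eqref{ineq:beta-v}, and via Theorem~\ref{thm:addi} it delivers the $2m^{1/4}=O(m^{1/4})$ approximation, which is the substance of the claim. What it does \emph{not} deliver is the stated domination by $m^{1/4}\cdot{\sf conv}(\cdots)$: Lemma~\ref{lem:ineq}(b) applied to your $\beta=m^{1/4}$ only gives domination by $2m^{1/4}\cdot{\sf conv}(\cdots)$, and your parenthetical ``the slack in the Cauchy--Schwarz step is comfortable'' does not close this gap — rerunning your counting argument with $\beta=m^{1/4}/2$ gives $|S|<4\sqrt{m}$ and hence only $\sum_i(h_i-\beta\gamma)^+<2m^{1/4}$, far from the required $m^{1/4}/2$. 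To get the constant in the Proposition you need the exact optimization, which the paper obtains by first reducing the worst case to points of the form $\gamma(k)\sum_{i\le k}\mb e_i$ (Lemma~\ref{lem:struct:sol}) and then maximizing $f(k)$ correctly. In short: fix the algebra in $f(k)$ and the closed-form route gives the full statement in two lines; your direct argument is a fine alternative for the asymptotic bound but, as written, proves domination only up to an extra factor of $2$.
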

\begin{proof}
We have for $k=1,\ldots,m$,
$$ \gamma(k)   = \frac{1}{k} \cdot \max \left\{ \sum_{i=1}^k h_i \vert\ \mb h \in {\cal U} \right\} = \frac{1}{\sqrt{k}}.$$
In particular, $\gamma = \frac{1}{\sqrt{m}}$. From Lemma~\ref{thm:beta} we get,
\begin{align*}
\beta  &= \max _{ k=1,\ldots,m}  \frac{\gamma(k)}{\gamma (m) +\frac{1}{k}} \\
& = \max _{ k=1,\ldots,m}  \frac{\frac{1}{\sqrt{k}}}{\frac{1}{\sqrt{m}} +\frac{1}{k}}.
\end{align*}
The maximum of this problem occurs for $ k = \sqrt{m}$. Then, $ \beta = \frac{m^{\frac{1}{4}}}{2} $.  We conclude from Theorem~\ref{thm:perm-inv-beta} that $\hat{\cal U}$ dominates ${\cal U}$ and our piecewise affine policy gives $O  ( m^{\frac{1}{4}} )$ approximation to the adjustable problem $\eqref{eqn:ar}$.
 \end{proof}

\noindent
\red{
\textbf{Remark.}
Consider the following ellipsoid uncertainty set
\begin{equation} \label{set:ellipscale}
\left\{ \mb h \geq \mb 0 \; \left | \;  \sum_{i=1}^m r_i h_i^ 2 \leq 1 \right.\right\}.
\end{equation}
This is widely used to model uncertainty in practice and is just a diagonal scaling of the hypersphere uncertainty set. As we mention before, the performance of our policy is not affected by scaling.
 Hence, our piecewise affine policy gives an $O  ( m^{\frac{1}{4}} )$-approximation  to the adjustable problem $\eqref{eqn:ar}$ for ellipsoid uncertainty sets~\eqref{set:ellipscale} similar to hypersphere. We analyze the case of more general ellipsoids in Proposition \ref{prop:ellipsoid}.
 }

\begin{prop} {\bf{(p-norm ball)}}\label{prop:p-ball} Consider the p-norm ball uncertainty set $ {\cal U}= \left\{ \mb h \in \mathbb R^{m}_+ \;  \big\vert \; \| \mb h\| _p \leq 1 \right\}$ where $p \geq 1 $. Then
$$ \hat{ \cal U} = 2 \beta \cdot {\sf{conv}} \left( \mb{e_1}, \mb{e_2},\ldots, \mb{e_m},m^{-\frac{1}{p}} \mb e \right) $$
dominates ${\cal U}$ with
\[ \beta = \frac{1}{p}   (p-1)^{\frac{p-1}{p}}             \cdot m^{\frac{p-1}{p^2}} = O (  m^{\frac{p-1}{p^2}} ).\]
Our piecewise affine solution \eqref{perm_policy} gives $O  ( m^{\frac{p-1}{p^2}} )$ approximation to $\eqref{eqn:ar}$.
\end{prop}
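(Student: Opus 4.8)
The statement is exactly the specialization of the general machinery of Section~\ref{sec:perm-inv} to the $p$-norm ball, so the plan is: (i) verify ${\cal U}=\{\mb h\in\mathbb R^m_+:\|\mb h\|_p\le 1\}$ satisfies Assumption~1 and is permutation invariant; (ii) compute the coefficients $\gamma(k)$ from~\eqref{eq:gamma(k)} in closed form; (iii) plug these into the closed-form scaling factor~\eqref{eq:sol:beta} of Lemma~\ref{thm:beta} and bound the resulting finite maximum; (iv) invoke Theorem~\ref{thm:addi}/Theorem~\ref{thm:perm-inv-beta}. For (i), down-monotonicity, membership in $[0,1]^m$, and $\mb e_i\in{\cal U}$ all follow from $h_i\le\|\mb h\|_p\le 1$, and permutation invariance is immediate since $\|\cdot\|_p$ is symmetric. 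For (ii), I would maximize $\sum_{i=1}^k h_i$ over ${\cal U}$ using H\"older's inequality: $\sum_{i=1}^k h_i\le k^{1-1/p}\big(\sum_{i=1}^k h_i^p\big)^{1/p}\le k^{1-1/p}$, with equality at $h_1=\dots=h_k=k^{-1/p}$, $h_{k+1}=\dots=h_m=0$; hence $\gamma(k)=k^{-1/p}$ and in particular $\gamma=\gamma(m)=m^{-1/p}$.

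\textbf{The main computation.} By Lemma~\ref{thm:beta}, the smallest $\beta$ for which~\eqref{eq:ineq:perm} (equivalently~\eqref{ineq:beta-v} with $\mb v=\gamma\mb e$) holds is $\beta_{\min}=\max_{1\le k\le m}\frac{k^{-1/p}}{m^{-1/p}+1/k}$. To estimate it I would relax $k$ to a continuous variable $t>0$ and analyze $f(t):=\frac{t^{-1/p}}{m^{-1/p}+1/t}=\frac{t^{(p-1)/p}}{m^{-1/p}t+1}$. A short derivative computation shows the sign of $f'$ is that of $(q-1)m^{-1/p}t+q$ with $q=(p-1)/p<1$, so $f$ is unimodal with unique maximizer $t^\star=(p-1)m^{1/p}$. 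Evaluating: $m^{-1/p}t^\star=p-1$, so the denominator is $p$, and $(t^\star)^{(p-1)/p}=(p-1)^{(p-1)/p}m^{(p-1)/p^2}$, giving $f(t^\star)=\tfrac1p(p-1)^{(p-1)/p}m^{(p-1)/p^2}$. Since the maximum over integers $k\in\{1,\dots,m\}$ is at most $\sup_{t>0}f(t)=f(t^\star)$, the value $\beta=\tfrac1p(p-1)^{(p-1)/p}m^{(p-1)/p^2}=O\big(m^{(p-1)/p^2}\big)$ is $\ge\beta_{\min}$ and hence still satisfies~\eqref{ineq:beta-v} with $\mb v=\gamma\mb e=m^{-1/p}\mb e\in{\cal U}$ (a larger $\beta$ is admissible because $\beta\mapsto\frac1\beta\sum_i(h_i-\beta v_i)^+$ is non-increasing).

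\textbf{Conclusion and obstacle.} Applying Theorem~\ref{thm:addi} with this $\beta$ and $\mb v=m^{-1/p}\mb e$ then yields that the piecewise affine solution~\eqref{perm_policy} is feasible and gives a $2\beta=O\big(m^{(p-1)/p^2}\big)$-approximation for~\eqref{eqn:ar}, and part~(b) of Lemma~\ref{lem:ineq} gives that $2\beta\cdot{\sf conv}\big(\mb e_1,\dots,\mb e_m,m^{-1/p}\mb e\big)$ dominates ${\cal U}$, which is the claimed $\hat{\cal U}$. The only non-routine step is the one-dimensional optimization of $f$ — pinning down $t^\star$ and simplifying $f(t^\star)$; I would also spend a sentence on the two bookkeeping points (that relaxing the integer maximum to $t>0$ only enlarges it, and the degenerate case $p=1$, where $f$ is decreasing and the formula evaluates, with $0^0=1$, to the valid bound $1$). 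Everything else is a direct citation of Lemmas~\ref{lem:gamma(k)}--\ref{thm:beta} and Theorems~\ref{thm:addi}--\ref{thm:perm-inv-beta}.
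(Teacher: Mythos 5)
Your proposal is correct and follows essentially the same route as the paper: compute $\gamma(k)=k^{-1/p}$, apply Lemma~\ref{thm:beta} to get $\beta=\max_k \frac{k^{-1/p}}{m^{-1/p}+1/k}$, and optimize over $k$ to obtain $\frac{1}{p}(p-1)^{\frac{p-1}{p}}m^{\frac{p-1}{p^2}}$ before invoking Theorem~\ref{thm:perm-inv-beta}. You actually supply more detail than the paper (the explicit unimodality argument with $t^\star=(p-1)m^{1/p}$, the remark that the integer maximum is dominated by the continuous supremum and that any larger admissible $\beta$ still satisfies~\eqref{ineq:beta-v}, and the $p=1$ edge case), all of which is sound.
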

\begin{proof}
We have for $k=1,\ldots,m$,
$$ \gamma(k)   = \frac{1}{k} \cdot \max \left\{ \sum_{i=1}^k h_i \vert\ \mb h \in {\cal U} \right\} = k^{\frac{-1}{p}}.$$
In particular, $\gamma =m^{\frac{-1}{p}}$. From Lemma~\ref{thm:beta} we get,
\begin{align*}
\beta  &= \max _{ k=1,\ldots,m}  \frac{\gamma(k)}{\gamma (m) +\frac{1}{k}} \\
& = \max _{ k=1,\ldots,m}  \frac{k^{\frac{-1}{p}}}{m^{\frac{-1}{p}} +\frac{1}{k}} \\
&= \frac{1}{p}   (p-1)^{\frac{p-1}{p}}             \cdot m^{\frac{p-1}{p^2}} = O \left(  m^{\frac{p-1}{p^2}} \right). \\
\end{align*}
We conclude from  Theorem~\ref{thm:perm-inv-beta} that $\hat{\cal U}$ dominates ${\cal U}$ and our piecewise affine policy gives $O (  m^{\frac{p-1}{p^2}} )$  approximation to the adjustable problem $\eqref{eqn:ar}$.
 \end{proof}

\begin{prop} {\bf{(Intersection of two norm balls)}}\label{prop:twoball} Consider $ {\cal U}$ the intersection of the norm balls  $ {\cal U}_1= \left\{ \mb h \in \mathbb R^{m}_+ \;  \big\vert \; \| \mb h\| _p \leq 1 \right\}$  and \\ $ {\cal U}_2= \left\{ \mb h \in \mathbb R^{m}_+ \;  \big\vert \; \| \mb h\| _q \leq r \right\}$ where $ p > q \geq 1 $ and $m^{\frac{1}{q}-\frac{1}{p}} \geq r \geq 1$. Then,
 $$ \hat{ \cal U} = \beta \cdot {\sf{conv}} \left( \mb{e_1}, \mb{e_2},\ldots, \mb{e_m}, \left( r m^{-\frac{1}{q}}\right) \mb e \right),$$
 where
 \[ \beta = \min( \beta_1, \beta_2), \; \beta_1= r^{\frac{1-p}{p}}m^{\frac{p-1}{pq}}, \mbox{ and } \beta_2=r^{\frac{1}{q}}m^{\frac{q-1}{q^2}}.\]
 Our piecewise affine solution \eqref{perm_policy} \red{gives a $2 \beta$ approximation }to $\eqref{eqn:ar}$.
\end{prop}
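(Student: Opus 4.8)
\emph{Proof proposal.} The plan is to reduce everything to Theorem~\ref{thm:perm-inv-beta}: once we exhibit the parameter $\gamma$ and a $\beta$ satisfying \eqref{eq:ineq:perm} for the permutation invariant set ${\cal U}={\cal U}_1\cap{\cal U}_2$, the feasibility, the $2\beta$-approximation, and the fact that $2\hat{\cal U}$ dominates ${\cal U}$ all follow directly. First I would verify the hypotheses: ${\cal U}$ is an intersection of two permutation invariant sets, hence permutation invariant; it is convex, full-dimensional, down-monotone, contained in $[0,1]^m$, and (using $r\ge 1$) contains every $\mb e_i$, so Assumption~1 holds.

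The first substantive step is to compute $\gamma(k)$ from \eqref{eq:gamma(k)} in closed form. Fix $k$ and take a maximizer $\mb h^\ast$ of $\max\{\sum_{i=1}^k h_i : \mb h\in{\cal U}\}$. Zeroing the coordinates $k+1,\dots,m$ keeps the point in ${\cal U}$ by down-monotonicity without changing the objective, and then averaging over all permutations of the coordinates $1,\dots,k$ keeps it in ${\cal U}$ (convexity and permutation invariance) and again does not change the objective; the result is a vector $t\sum_{i=1}^k\mb e_i\in{\cal U}$. Feasibility in ${\cal U}_1\cap{\cal U}_2$ forces $t k^{1/p}\le 1$ and $t k^{1/q}\le r$, so the largest admissible $t$, which equals the optimal value divided by $k$, is $\min(k^{-1/p}, r k^{-1/q})$; hence $\gamma(k)=\min(k^{-1/p}, r k^{-1/q})$ (consistent with Lemma~\ref{lem:gamma(k)}). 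Since $r\le m^{1/q-1/p}$, the $q$-norm bound is binding at $k=m$, so $\gamma:=\gamma(m)=r m^{-1/q}$ and $\mb v=\gamma\mb e=(r m^{-1/q})\mb e\in{\cal U}$, matching the simplex in the statement.

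Next I would invoke Lemma~\ref{thm:beta} together with Lemma~\ref{lem:struct:sol}: these show that \eqref{eq:ineq:perm} holds for a given $\beta$ if and only if $\beta\ge\max_{1\le k\le m}\gamma(k)/(\gamma+1/k)$. I would bound this maximum in two ways, using ${\cal U}\subseteq{\cal U}_1$ and ${\cal U}\subseteq{\cal U}_2$ to relax the numerator. From $\gamma(k)\le k^{-1/p}$,
\[
\frac{\gamma(k)}{\gamma+\frac1k}\ \le\ \frac{k^{-1/p}}{r m^{-1/q}+\frac1k}\ =\ \frac{k^{1-1/p}}{(r m^{-1/q})k+1}\ \le\ \sup_{t>0}\frac{t^{1-1/p}}{(r m^{-1/q})t+1},
\]
and the one-variable maximization (critical point at $(r m^{-1/q})t=p-1$) evaluates to $\frac1p(p-1)^{(p-1)/p} r^{(1-p)/p} m^{(p-1)/(pq)}=\frac1p(p-1)^{(p-1)/p}\beta_1\le\beta_1$. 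Symmetrically, $\gamma(k)\le r k^{-1/q}$ yields $\gamma(k)/(\gamma+1/k)\le\frac1q(q-1)^{(q-1)/q}\beta_2\le\beta_2$. Therefore $\max_k\gamma(k)/(\gamma+1/k)\le\min(\beta_1,\beta_2)=\beta$, and since also $\beta\ge 1$ (from $1\le r\le m^{1/q-1/p}$), this $\beta$ satisfies \eqref{eq:ineq:perm}. Theorem~\ref{thm:perm-inv-beta} then applies with this $\gamma$ and $\beta$, giving that \eqref{perm_policy} is feasible, is a $2\beta$-approximation, and $2\hat{\cal U}$ dominates ${\cal U}$.

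I expect the only part requiring genuine care to be the symmetrization argument that pins down $\gamma(k)$, together with the observation that relaxing the numerator via ${\cal U}\subseteq{\cal U}_1$ versus ${\cal U}\subseteq{\cal U}_2$ produces exactly the two candidate bounds $\beta_1$ and $\beta_2$ (whichever is smaller being the one reported, since which one dominates depends on the regime of $r$). One should also check $\frac1p(p-1)^{(p-1)/p}\le 1$ and $\frac1q(q-1)^{(q-1)/q}\le 1$ for $p>q\ge 1$, which is why $\beta_1$ and $\beta_2$ are legitimate (slightly loose) upper bounds.
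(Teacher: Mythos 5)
Your proof is correct, but it takes a genuinely different route from the paper's. The paper proves the domination claim directly: for $\mb h$ on the boundary of ${\cal U}_1$ (resp.\ ${\cal U}_2$) it exhibits explicit convex multipliers $\alpha_i = h_i^p/p$, $\alpha_{m+1}=(p-1)/p$ (resp.\ $\alpha_i = h_i^q/(r^q q)$, $\alpha_{m+1}=(q-1)/q$) and applies the weighted AM--GM inequality to get $h_i \le \beta\left(\alpha_i + rm^{-1/q}\alpha_{m+1}\right)$, which shows that $\beta\cdot{\sf conv}\left(\mb e_1,\ldots,\mb e_m,(rm^{-1/q})\mb e\right)$ itself dominates ${\cal U}$; the $2\beta$-approximation then follows from Theorem~\ref{thm:cost}. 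You instead run the permutation-invariant machinery of Section~3 (as the paper does for the hypersphere and the single $p$-ball): compute $\gamma(k)=\min(k^{-1/p},rk^{-1/q})$ exactly by symmetrization, identify $\gamma=rm^{-1/q}$ using $r\le m^{1/q-1/p}$, and bound $\max_k \gamma(k)/(\gamma+1/k)$ by $\min(\beta_1,\beta_2)$ via two relaxations of the numerator and a one-variable optimization; your calculus checks out, and the constants $\frac1p(p-1)^{(p-1)/p}\le 1$ and $\frac1q(q-1)^{(q-1)/q}\le 1$ do hold for $p>q\ge 1$. The two arguments are secretly the same inequality --- the critical point of $t^{1-1/p}/(at+1)$ is exactly where the paper's AM--GM step is tight --- but they buy slightly different things: your route is more systematic and pins down $\gamma(k)$ in closed form, but it certifies only condition \eqref{ineq:beta-v}, so via Lemma~\ref{lem:ineq}(b) you obtain domination of ${\cal U}$ only by the factor-$2\beta$ simplex, whereas the paper's construction shows the sharper fact that the factor-$\beta$ simplex already dominates (which is what the displayed $\hat{\cal U}$ in the statement is implicitly asserting). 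Since Theorem~\ref{thm:addi} needs only \eqref{ineq:beta-v}, the operative conclusion --- the $2\beta$-approximation of policy \eqref{perm_policy} --- is fully established either way; the only nitpick is that the clean citation for a $\beta$ that merely upper-bounds the optimum \eqref{eq:sol:beta} is Theorem~\ref{thm:addi} rather than Theorem~\ref{thm:perm-inv-beta}, whose statement fixes $\beta$ to the exact maximum.
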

\begin{proof}
To prove that $\hat{ \cal U}$ dominates ${\cal U}_1 \cap { \cal U}_2$, it is sufficient to consider $\mb h$ in the boundary of ${\cal U}_1$ or ${ \cal U}_2$  and find $ \alpha_1, \alpha_2,\ldots,\alpha_{m+1} \geq 0$  with $\alpha_1+ \ldots+\alpha_{m+1} =1$ such that for all $ i \in [m],$
$$h_i \leq \beta \left( \alpha_i + rm^{-\frac{1}{q}}\alpha_{m+1}\right).$$

\vspace{2mm}
\noindent {\bf {Case 1:}} $\beta = \beta_1.$\\
Let $ \mb h \in {\cal U}_1$ such that $\| \mb h\|_p=1$, we take $\alpha_i = \frac{h_i^p}{p}$ for $ i \in [m]$ and $\alpha_{m+1}= \frac{p-1}{p}.$ First, we have $\sum_{i=1}^{m+1} \alpha_i =1$ and for all $ i \in [m]$,
\begin{align*}
 \beta \left( \alpha_i +rm^{-\frac{1}{q}}\alpha_{m+1}\right) &=  \beta_1 \left( \frac{h_i^p}{p}+ \frac{p-1}{p} rm^{-\frac{1}{q}} \right)\\
 & \geq \beta_1  \left( h_i^p \right)^{\frac{1}{p}} \left(r m^{-\frac{1}{q}}\right)^{\frac{p-1}{p}} = h_i,
\end{align*}
\red{where the inequality follows from the weighted inequality of arithmetic and geometric means (known as Weighted AM-GM inequality)}. Therefore $\hat{ \cal U}$ dominates ${\cal U}_1 \cap { \cal U}_2 $. \\

\vspace{2mm}
\noindent{\bf{Case 2:}} $\beta = \beta_2.$\\
Let $ \mb h \in {\cal U}_2$ such that $\| \mb h\|_q=r$, we take $\alpha_i = \frac{h_i^q}{r^q q}$ for $ i \in [m]$ and $\alpha_{m+1}= \frac{q-1}{q}.$ First, we have $\sum_{i=1}^{m+1} \alpha_i =1$ and for all $ i \in [m]$,
\begin{align*}
 \beta \left( \alpha_i +rm^{-\frac{1}{q}}\alpha_{m+1}\right) &=  \beta_2 \left( \frac{h_i^q}{r^q q}+ \frac{q-1}{q} rm^{-\frac{1}{q}} \right)\\
 & \geq \beta_2  \left( \frac{h_i^q}{r^q} \right)^{\frac{1}{q}} \left(r m^{-\frac{1}{q}}\right)^{\frac{q-1}{q}} = h_i,
\end{align*}
\red{where the inequality followed from the weighted AM-GM inequality}. Therefore, $\hat{ \cal U}$ dominates ${\cal U}_1\cap{ \cal U}_2 $.
 \end{proof}

We also consider a permutation invariant uncertainty set that is the intersection of  an ellipsoid and the non-negative orthant , i.e.,
\begin{equation} \label{ellip}
{\cal U}=\left\{\mb{h}\in{\mathbb R}^{m}_+\;|\;   \mb{h}^T \mb \Sigma  \mb{h}       \leq 1\right\}
\end{equation}
where $\mb \Sigma \succeq \mb 0$. For ${\cal U}$ to be a permutation invariant set satisfying Assumption 1, $\mb {\Sigma}$ must be of the following form
\begin{equation} \label{matrix:ellip}
 \mb \Sigma= \left(
\begin{matrix}
1 &  a & \ldots & a\\
a &  1 & \ldots & a\\
\vdots & \vdots & \ddots & \vdots\\
a  &   a       &\ldots & 1
\end{matrix}
\right)
\end{equation}
where $ 0\leq a \leq 1$.

\begin{prop} {\bf (Permutation invariant ellipsoid)} \label{prop:ellipsoid}
Consider the uncertainty set  ${\cal U}$  defined in \eqref{ellip} where  $\mb \Sigma$ is defined in \eqref{matrix:ellip}.
 Then
 \[\hat{\cal U}=\beta\cdot{\sf conv}\left(\mb{e}_1, \mb{e}_2,\ldots, \mb{e}_m, \gamma\mb{e} \right),\]
dominates ${\cal U}$ with $$\beta=   \left(  \frac{a}{2}+ \frac{(1-a)^{\frac{1}{2}}}{\left( am^2+(1-a)m\right)^{\frac{1}{4}}}\right)^{-1}= O \left(  m^{\frac{2}{5}} \right)  $$
 and $$ \gamma = \frac{1}{ \sqrt{\left( am^2+(1-a)m \right)}  } .   $$
 Our piecewise affine policy  \eqref{perm_policy} gives $ O \left(  m^{\frac{2}{5}} \right)  $ approximation to the adjustable robust problem $\eqref{eqn:ar}$.
\end{prop}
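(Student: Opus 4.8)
My plan is to invoke Theorem~\ref{thm:perm-inv-beta}: since ${\cal U}$ is permutation invariant and satisfies Assumption~1, the whole statement reduces to computing the coefficients $\gamma(k)$ of~\eqref{eq:gamma(k)} for this set, bounding the constant $\beta$ of~\eqref{eq:sol:beta} by the stated closed form, and checking the $O(m^{2/5})$ estimate.

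\emph{Computing $\gamma(k)$.} Using the identity $\mb h^{T}\mb\Sigma\mb h=(1-a)\|\mb h\|_2^2+a(\mb e^{T}\mb h)^2$, the problem defining $\gamma(k)$ is to maximize $\sum_{i=1}^{k}h_i$ over $\mb h\ge\mb 0$ with $(1-a)\|\mb h\|_2^2+a(\mb e^{T}\mb h)^2\le 1$. In an optimal solution the coordinates with index $>k$ may be set to $0$ (they are absent from the objective and only tighten the constraint), and since the objective is linear while the remaining feasible region is convex and symmetric in $h_1,\dots,h_k$, averaging over permutations of those coordinates yields an optimum with $h_1=\dots=h_k=t$; the constraint then reads $t^{2}\big(ak^{2}+(1-a)k\big)\le 1$. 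Hence $\gamma(k)=\big(ak^{2}+(1-a)k\big)^{-1/2}$, and in particular $\gamma=\gamma(m)=\big(am^{2}+(1-a)m\big)^{-1/2}$, the stated value; by Lemma~\ref{lem:gamma(k)}, $\gamma\mb e\in{\cal U}$.

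\emph{Bounding $\beta$.} By Lemma~\ref{thm:beta} the exact constant is $\beta_0=\max_{1\le k\le m}\gamma(k)/(\gamma+1/k)$; its exact value is the root of a quadratic (the first-order condition in $k$ factors through the term $\gamma k+1$) and has no clean form, so instead I would prove $\beta_0\le\big(a+2\sqrt{(1-a)\gamma}\big)^{-1}$. Since $k\gamma(k)=\big(k/(ak+1-a)\big)^{1/2}$, this is equivalent to
\[(\gamma+1/k)\,\sqrt{ak^{2}+(1-a)k}\ \ge\ a+2\sqrt{(1-a)\gamma}\qquad\text{for all }k=1,\dots,m.\]
Writing the left-hand side as $\sqrt{a(\gamma k+1)^{2}+(1-a)(\gamma\sqrt k+1/\sqrt k)^{2}}$, squaring, and applying the AM--GM bound $1/k+\gamma^{2}k\ge 2\gamma$ reduces this to $(\gamma k+1)^{2}\ge a+4\sqrt{(1-a)\gamma}$; since $k\ge1$ it suffices that $(1+\gamma)^{2}=(1-a)+(a+2\gamma+\gamma^{2})\ge a+4\sqrt{(1-a)\gamma}$, another AM--GM estimate valid unless $1-a$ is $O(\gamma)$, the residual near-simplex range $a\to1$ being handled by the crude bound $\beta_0\le1/\sqrt a$ that follows from $k\gamma(k)\le1/\sqrt a$. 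Consequently, by Theorem~\ref{thm:perm-inv-beta} (equivalently Theorem~\ref{thm:addi} with $\mb v=\gamma\mb e$ and this $\beta_0$, together with the observation that a larger simplex $\beta\cdot{\sf conv}(\mb e_1,\dots,\mb e_m,\gamma\mb e)$, $\beta\ge2\beta_0$, still dominates ${\cal U}$ and, as ${\sf conv}(\cdot)\subseteq{\cal U}$, still gives a $\beta$-approximation), the set $\hat{\cal U}=\beta\cdot{\sf conv}(\mb e_1,\dots,\mb e_m,\gamma\mb e)$ with $\beta=2\beta_0\le(a/2+\sqrt{(1-a)\gamma})^{-1}$ dominates ${\cal U}$ and the policy~\eqref{perm_policy} is a $\beta$-approximation.

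\emph{Asymptotics and main obstacle.} It remains to see $\beta=(a/2+\sqrt{(1-a)\gamma})^{-1}=O(m^{2/5})$, i.e. $a/2+\sqrt{(1-a)\gamma}=\Omega(m^{-2/5})$: if $a\ge m^{-2/5}$ this is immediate, and if $a<m^{-2/5}$ then $am^{2}+(1-a)m\le m^{8/5}+m\le 2m^{8/5}$, so $\gamma\ge(2m^{8/5})^{-1/2}$ and $\sqrt{(1-a)\gamma}=\Omega(m^{-2/5})$. The main obstacle is the middle step — passing from the exact maximum in~\eqref{eq:sol:beta}, which is only a quadratic root, to the explicit closed form $\beta=(a/2+\sqrt{(1-a)\gamma})^{-1}$; the AM--GM chain covers essentially the whole range of $a$, and only the innocuous sliver where $a$ is within $O(\gamma)$ of $1$ (there ${\cal U}$ is a near-simplex and $\beta_0=1+o(1)$) requires a separate trivial check. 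The quadratic program for $\gamma(k)$ and the two-case estimate above are routine.
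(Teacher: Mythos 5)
Your route is genuinely different from the paper's: you push the set through the general permutation‑invariant machinery (compute $\gamma(k)$, invoke Lemma~\ref{thm:beta} for $\beta_0=\max_k\gamma(k)/(\gamma+1/k)$, then bound that maximum), whereas the paper never touches Lemma~\ref{thm:beta} here. It proves domination \emph{directly}: for $\mb h$ on the boundary it exhibits the explicit weights $\alpha_i=\tfrac12\bigl((1-a)h_i^2+a h_i\,\mb e^T\mb h\bigr)$, $\alpha_{m+1}=\tfrac12$, uses $\mb e^T\mb h\ge 1$ and one AM--GM per coordinate to get $h_i\le\beta(\alpha_i+\gamma\alpha_{m+1})$, and so obtains domination by $\beta\cdot{\sf conv}(\cdot)$ with the exact stated $\beta$, uniformly in $a$, with no factor of $2$ to absorb. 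Your computation of $\gamma(k)=(ak^2+(1-a)k)^{-1/2}$ and your two‑case argument for $a/2+\sqrt{(1-a)\gamma}=\Omega(m^{-2/5})$ are both correct (the paper does the latter in three cases).

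The gap is in the middle step, and it is not the ``innocuous sliver'' you describe. Your chain reduces the claim $\beta_0\le\bigl(a+2\sqrt{(1-a)\gamma}\bigr)^{-1}$ to $(1-a)+2\gamma+\gamma^2\ge 4\sqrt{(1-a)\gamma}$, which genuinely fails whenever $\sqrt{(1-a)/\gamma}\in(2-\sqrt2,\,2+\sqrt2)$, e.g.\ at $1-a=4\gamma$ the left side is $6\gamma+\gamma^2<8\gamma$. In exactly that regime your fallback $\beta_0\le 1/\sqrt a$ cannot close the argument: the target $\bigl(a+2\sqrt{(1-a)\gamma}\bigr)^{-1}$ is \emph{at most} $1$ there (at $1-a=\gamma$ it equals $1/(1+\gamma)$, which is attained by the $k=1$ term of $\beta_0$, so the claimed bound is tight), while $1/\sqrt a>1$. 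So for $1-a=\Theta(\gamma)$ you have established neither the domination of ${\cal U}$ by $\beta\cdot{\sf conv}(\cdot)$ with the stated $\beta$ nor the inequality $2\beta_0\le\beta$ that your Lemma~\ref{lem:ineq}(b) route needs; what survives there is only $\beta_0=O(1)$ and hence the $O(m^{2/5})$ approximation factor, not the literal first claim of the proposition. To repair it along your lines you would have to control the intermediate $k$ (roughly $1<k<1/\gamma$) where $(\gamma k+1)^2$ has not yet grown and the AM--GM $\gamma^2k+1/k\ge2\gamma$ is lossy — a real case analysis, not a trivial check. The paper's direct construction sidesteps all of this.
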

The proof of Proposition~\ref{prop:ellipsoid} is presented in Appendix \ref{apx-proofs:prop:ellipsoid}.

\begin{prop} {\bf{(Budget of uncertainty set)}} \label{prop:Budget of uncertainty set} Consider the budget of uncertainty set
\begin{equation}\label{set:budget of uncertainty}
\red{ {\cal U}= \left\{ \mb h \in [0,1]^{m} \;  \big\vert \; \sum_{i=1}^m h_i \leq k \right\}.}
\end{equation}
Then, $$ \hat{ \cal U} = \beta \cdot {\sf{conv}} \left( \mb{e_1}, \mb{e_2},\ldots, \mb{e_m}, \frac{k}{m} \mb e \right)$$ where $ \beta = \min \left( k, \frac{m}{k}\right) $.
 In particular, our piecewise affine policy \eqref{perm_policy} gives $ 2 \beta $ approximation to the adjustable problem $\eqref{eqn:ar}$.
\end{prop}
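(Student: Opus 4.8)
The plan is to apply Theorem~\ref{thm:perm-inv-beta} directly. Since ${\cal U}$ is permutation invariant and satisfies Assumption~1 (it is convex, full-dimensional, down-monotone, and $\mb e_i \in {\cal U}$ when $k \geq 1$), it suffices to compute the coefficients $\gamma(k)$ from~\eqref{eq:gamma(k)} and then invoke the closed-form expression~\eqref{eq:sol:beta} for the scaling factor. First I would compute, for each $j = 1,\ldots,m$,
\[
\gamma(j) = \frac{1}{j} \max\left\{ \sum_{i=1}^j h_i \;\Big\vert\; \mb h \in {\cal U} \right\} = \frac{1}{j}\min(j, k) = \min\left(1, \frac{k}{j}\right),
\]
where the middle equality holds because the sum of the largest $j$ coordinates of a point in the budget set is maximized by putting as much budget as possible on those coordinates, subject to the box constraint $h_i \leq 1$; this gives $\min(j,k)$. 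In particular $\gamma = \gamma(m) = k/m$ (assuming $k \leq m$, which is the interesting regime), which matches the apex $\frac{k}{m}\mb e$ of the claimed simplex.

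Next I would substitute into~\eqref{eq:sol:beta}:
\[
\beta = \max_{j=1,\ldots,m} \frac{\gamma(j)}{\gamma + \frac{1}{j}} = \max_{j=1,\ldots,m} \frac{\min(1, k/j)}{\frac{k}{m} + \frac{1}{j}}.
\]
I would split the maximization at $j = k$: for $j \leq k$ the numerator is $1$ and the expression is $\frac{jm}{jk + m}$, which is increasing in $j$, so its best value on this range is at $j = \lceil k \rceil$ (or $j=k$ in the continuous relaxation), giving roughly $\frac{km}{k^2 + m}$; for $j \geq k$ the numerator is $k/j$ and the expression is $\frac{km}{jk + m}$, which is decreasing in $j$, so again the best is near $j = k$, giving the same value. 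Hence $\beta = \Theta\!\left(\frac{km}{k^2+m}\right) = \Theta\!\left(\min\!\left(k, \frac{m}{k}\right)\right)$, since $\frac{km}{k^2+m}$ is within a factor of $2$ of $\min(k, m/k)$. Theorem~\ref{thm:perm-inv-beta} then immediately yields that $\hat{\cal U} = \beta \cdot {\sf conv}(\mb e_1,\ldots,\mb e_m, \frac{k}{m}\mb e)$ dominates ${\cal U}$ and that the policy~\eqref{perm_policy} is a $2\beta$-approximation.

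The only mild obstacle is bookkeeping: one must handle the distinction between the integer maximization over $j \in \{1,\ldots,m\}$ and the clean closed form, verify that rounding $j$ to an integer near $k$ only costs a constant factor (so the $O(\cdot)$ bound is unaffected), and note that when $k \geq m$ the set is just the full cube, $\gamma = 1$, and $\beta = \Theta(1)$, consistent with $\min(k, m/k)$ being $\Theta(1)$ there as well. I expect no deeper difficulty, since all the heavy lifting — establishing that~\eqref{eq:sol:beta} gives the right scaling factor and that this scaling factor controls the approximation ratio — is already done in Lemmas~\ref{lem:struct:sol}, \ref{thm:beta} and Theorem~\ref{thm:perm-inv-beta}.
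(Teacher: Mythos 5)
Your proposal is correct, but it takes a genuinely different route from the paper. You invoke the general machinery for permutation-invariant sets (Lemma~\ref{lem:struct:sol}, Lemma~\ref{thm:beta}, Theorem~\ref{thm:perm-inv-beta}), computing $\gamma(j)=\min(1,k/j)$ and maximizing $\gamma(j)/(\gamma+1/j)$ to get $\beta\approx\frac{km}{k^2+m}$, which you then relate to $\min(k,m/k)$ up to a factor of $2$. The paper instead gives a direct, elementary domination argument: for each $\mb h$ on the boundary $\sum_i h_i=k$ it exhibits explicit convex weights, namely $\alpha_i=h_i/k,\ \alpha_{m+1}=0$ when $\beta=k$, and $\alpha_i=0,\ \alpha_{m+1}=1$ when $\beta=m/k$, verifying $h_i\le\beta(\alpha_i+\tfrac{k}{m}\alpha_{m+1})$ in each case; the $2\beta$ claim then follows from Lemma~\ref{lem:ineq}(a) and Theorem~\ref{thm:addi}. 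Your route buys a slightly sharper scaling constant ($\frac{km}{k^2+m}\le\min(k,m/k)$, so your approximation guarantee is at least as good) at the cost of the integer-rounding bookkeeping at $j\approx k$ that you correctly flag. The one mismatch worth noting is that your argument certifies domination by $2\beta_0\cdot{\sf conv}(\mb e_1,\ldots,\mb e_m,\tfrac{k}{m}\mb e)$ via Lemma~\ref{lem:ineq}(b) rather than by the set $\min(k,m/k)\cdot{\sf conv}(\cdot)$ literally named in the proposition; this is immaterial for the $2\beta$-approximation conclusion, but if you want the exact domination statement as written you would still need the paper's two-case direct verification (or an equivalent one).
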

The proof of Proposition \ref{prop:Budget of uncertainty set} is presented in Appendix \ref{apx-proofs:prop:Budget of uncertainty set}.
%We defer the proofs of the above propositions to the full version of the paper.
\red{
\subsection{Comparison to affine policy}}
Table \ref{tab:results} summarizes the performance bounds for our piecewise affine policy and the best known performance bounds in the literature for affine policies~\cite{BB15}.  As can be seen, \red{our piecewise affine policy performs significantly better than the known bounds for affine policy for many interesting sets, including hypersphere, ellipsoid and norm-balls. For instance, our policy gives $ O ( m^{\frac{1}{4}})$-approximation for the hypersphere and   $O(m^{\frac{p-1}{p^2}})$-approximation for the $p$-norm ball, while affine policy gives $O(m^{\frac{1}{2}})$-approximation for hypersphere and $ O(m^{\frac{1}{p}})$-approximation for the $p$-norm ball~\cite{BB15}, respectively. However, as we mentioned before, our policy is not a generalization of affine policies and, in fact, affine policies may perform better for certain uncertainty sets.  However, we present a family of examples where an optimal affine policy gives an $\Omega (\sqrt{m}) $-approximation, while our policy is \emph{near-optimal} for the adjustable robust problem~\eqref{eqn:ar}. In particular, we consider the following instance motivated from the worst-case examples of affine policy in~\cite{BG10} and \cite{housni2017beyond}.
\begin{equation} \label{ex:bad}
\begin{aligned}
&n=m, \; \;   r= \lceil m- \sqrt{m} \rceil ,\; \;  N= \binom m r \\
&     {B}_{ij}= \left\{
    \begin{array}{ll}
        1 & \mbox{if} \; \; i=j \\
          \frac{1}{\sqrt{m}} 		& \mbox{if}  \; \;   i \neq j \\
    \end{array}
\right.\\
& \mb A= \mb B, \; \;  \mb c=\frac{1}{15}\mb e, \; \;  \mb d = \mb e \\
&{\cal U} = {\sf{conv}} \left( \mb 0, \mb e_1 , \ldots ,\mb e_m, \mb \nu_1, \ldots, \mb \nu_N \right) \\
&\text{        where   } \mb \nu_1 =  \frac{1}{\sqrt{m}} \cdot [ \underbrace{1,\ldots,1}_r ,0\ldots,0 ] ;
 \end{aligned}
\end{equation}
$ \mb \nu_1$ has exactly $r$  non-zero coordinates, each equal to $\frac{1}{\sqrt{m}}$. The extreme points $\mb \nu_i$ of $\nu_1$, are permutations of the non-zero coordinates of $\mb \nu_{1}$.  Therefore, ${\cal U}$ has exactly $ \binom m r  +m+1$ extreme points.}

\red{\begin{lemma}\label{lem:tight-comp-paff}
Our piecewise affine policy \eqref{picewise_sl} gives an $ O (1+ \frac{1}{\sqrt{m}})$-approximation for the adjustable robust problem~\eqref{eqn:ar} for instance \eqref{ex:bad}. %whereas $z_{\sf Aff}({\cal U}) = \Omega(m^{1/2}) \cdot z_{\sf AR}({\cal U})$.
\end{lemma}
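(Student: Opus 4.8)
The plan is to bypass the generic $2\beta$ bound of Theorem~\ref{thm:cost} and instead exhibit, for the instance~\eqref{ex:bad}, a dominating simplex $\hat{\cal U}$ whose scaling factor is already $1+O(m^{-1/2})$, together with a set-dependent domination map that stays inside $\hat{\cal U}$ itself, so that the resulting piecewise affine policy has performance bound equal to the scaling factor rather than twice it. The key point is that the ``canonical'' permutation-invariant simplex ${\sf conv}(\mb e_1,\ldots,\mb e_m,\gamma\mb e)$ of Theorem~\ref{thm:perm-inv-beta} has minimal scaling factor $\beta=1$ but, read through the closed-form map~\eqref{eq:mapping}, only yields a $2$-approximation here; one obtains the near-optimal bound by moving the apex of the simplex to the point $\tfrac{1}{\sqrt m}\mb e$, which does \emph{not} lie in ${\cal U}$ yet single-handedly dominates every ``spread'' extreme point $\mb\nu_j$.

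Concretely, take $\hat{\cal U}={\sf conv}\!\big(\mb e_1,\ldots,\mb e_m,\tfrac{1}{\sqrt m}\mb e\big)$, a simplex with $m+1$ explicit extreme points. It dominates ${\cal U}$: each extreme point of ${\cal U}$ is $\mb 0$, some $\mb e_i$, or a permutation $\mb\nu_j$ of $\mb\nu_1$ all of whose entries are $\le\tfrac{1}{\sqrt m}$, so $\mb 0,\mb\nu_j$ are dominated by the apex $\tfrac{1}{\sqrt m}\mb e\in\hat{\cal U}$ and $\mb e_i$ by $\mb e_i\in\hat{\cal U}$; more usefully, writing $\mb h\in{\cal U}$ as $\mb h=\sum_i\mu_i\mb e_i+\sum_j\lambda_j\mb\nu_j+\mu_0\mb 0$ with nonnegative weights summing to one, the point $\hat{\mb h}(\mb h):=\sum_i\mu_i\mb e_i+\big(\sum_j\lambda_j+\mu_0\big)\tfrac{1}{\sqrt m}\mb e$ is a convex combination of the vertices of $\hat{\cal U}$ with $\hat{\mb h}(\mb h)\ge\mb h$, and fixing a piecewise affine selection of the weights makes $\hat{\mb h}(\cdot)$ a piecewise affine map of ${\cal U}$ into $\hat{\cal U}$. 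For the scaling factor, since ${\cal U}\subseteq[0,1]^m$ is down-monotone with $\mb e_i\in{\cal U}$ we have $\mb e_i\in\beta{\cal U}$ for every $\beta\ge1$, so only the apex binds, and $\tfrac{1}{\sqrt m}\mb e\in\beta{\cal U}\iff\tfrac{1}{\beta\sqrt m}\mb e\in{\cal U}$; as ${\cal U}$ is permutation invariant, the largest multiple of $\mb e$ in ${\cal U}$ is $\gamma(m)\mb e$ with $\gamma(m)=\tfrac{1}{m}\max_{\mb h\in{\cal U}}\sum_i h_i=\tfrac{r}{m\sqrt m}$ (the maximum attained at $\mb\nu_1$ since $r/\sqrt m\ge1$ for $m\ge4$; cf.\ Lemma~\ref{lem:gamma(k)}). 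Hence $\beta({\cal U},\hat{\cal U})=m/r=m/\lceil m-\sqrt m\rceil\le\tfrac{1}{1-m^{-1/2}}=1+O(m^{-1/2})$.

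Finally, $\hat{\cal U}$ being a simplex with $m+1$ extreme points, $\Pi_{\sf AR}(\hat{\cal U})$ is a compact LP; let $\hat{\mb x},\hat{\mb y}(\cdot)$ be an optimal solution. Define $\mb x=\hat{\mb x}$ and $\mb y(\mb h)=\sum_i\mu_i(\mb h)\,\hat{\mb y}(\mb e_i)+\big(\sum_j\lambda_j(\mb h)+\mu_0(\mb h)\big)\hat{\mb y}\big(\tfrac{1}{\sqrt m}\mb e\big)$. This policy is piecewise affine and feasible for ${\cal U}$, because $\mb A\mb x+\mb B\mb y(\mb h)\ge\hat{\mb h}(\mb h)\ge\mb h$ by convexity and feasibility of $\hat{\mb y}$ at the vertices of $\hat{\cal U}$; and, since the weights sum to one, $\mb c^T\mb x+\mb d^T\mb y(\mb h)$ is a convex combination of the vertex objective values $\mb c^T\hat{\mb x}+\mb d^T\hat{\mb y}(v)$, each $\le z_{\sf AR}(\hat{\cal U})$, so $\mb c^T\mb x+\mb d^T\mb y(\mb h)\le z_{\sf AR}(\hat{\cal U})$ for every $\mb h\in{\cal U}$. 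Combining with Theorem~\ref{thm:ineq}, $z_{\sf AR}(\hat{\cal U})\le\beta({\cal U},\hat{\cal U})\,z_{\sf AR}({\cal U})=\tfrac{m}{r}\,z_{\sf AR}({\cal U})$, so the policy is a $\big(1+O(m^{-1/2})\big)$-approximation for~\eqref{eqn:ar} on instance~\eqref{ex:bad}. (One can moreover check $z_{\sf AR}({\cal U})=z_{\sf AR}(\hat{\cal U})=1$ for large $m$, so the policy is essentially optimal, matching the ``near-optimal'' assertion.)

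The main obstacle is the first step --- picking the right $\hat{\cal U}$. The general permutation-invariant recipe gives the apex $\gamma\mb e\in{\cal U}$, whose closed-form map~\eqref{eq:mapping} only provides a $2$-approximation here; one has to see that the simplex with apex $\tfrac{1}{\sqrt m}\mb e\notin{\cal U}$ is the correct object, precisely because that apex dominates all the hard ``spread'' scenarios $\mb\nu_j$ at once while keeping the scaling factor at $m/r$, and then one must verify that the set-dependent domination map lands \emph{inside} $\hat{\cal U}$ rather than in $2\cdot\hat{\cal U}$, which is what converts the scaling factor into the approximation factor with no spurious factor of two. Everything after that --- the domination check, the closed-form scaling factor, feasibility, and the cost bound --- is routine.
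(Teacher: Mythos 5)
Your proof is correct and follows essentially the same route as the paper's: the same dominating simplex $\hat{\cal U}={\sf conv}(\mb e_1,\ldots,\mb e_m,\tfrac{1}{\sqrt m}\mb e)$, the same scaling factor $\beta=m/r$ obtained by observing that $\tfrac{r}{m\sqrt m}\mb e\in{\cal U}$ (via averaging the $\mb\nu_j$), and the same conclusion via Theorem~\ref{thm:ineq}. You are in fact slightly more explicit than the paper in spelling out the set-dependent piecewise affine map into $\hat{\cal U}$ that realizes the $\beta$ bound (rather than $2\beta$), a detail the paper leaves implicit.
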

We can prove Lemma \ref{lem:tight-comp-paff} by constructing a dominating set within a scaling factor $ O (1+ \frac{1}{\sqrt{m}})$ from ${\cal U}$. We present the complete proof of Lemma \ref{lem:tight-comp-paff} in Appendix \ref{apx-proofs:lem:tight-comp-paff}.
\begin{lemma}\label{lem:tight-comp-aff}
Affine policy  gives an $ \Omega ({\sqrt{m}})$-approximation for the adjustable robust problem~\eqref{eqn:ar} for instance \eqref{ex:bad}. Moreover, for any optimal affine solution, the cost of the first-stage solution $ \mb x^*_{\sf Aff}$ is $ \Omega ({\sqrt{m}})$ away from the optimal adjustable problem ~\eqref{eqn:ar}, i.e. $ \mb c^T \mb x^*_{\sf Aff} = \Omega(m^{1/2}) \cdot z_{\sf AR}({\cal U})$.
\end{lemma}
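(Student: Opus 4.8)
The plan is to bound the two quantities in the ratio separately: to show $z_{\sf AR}({\cal U})\le 1$, and to show that every \emph{optimal} affine solution for instance~\eqref{ex:bad} spends $\Omega(\sqrt m)$ in the first stage, i.e.\ $\mb c^T\mb x^*_{\sf Aff}=\Omega(\sqrt m)$. Both assertions of the lemma then follow — the second directly, and the first because $z_{\sf Aff}({\cal U})\ge \mb c^T\mb x^*_{\sf Aff}$.

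For the upper bound on $z_{\sf AR}$, I would take $\mb x=\mb 0$ and specify the recourse at each extreme point of ${\cal U}$. At $\mb e_i$, take $\mb y=\mb e_i$: this is feasible since $B_{ii}=1$ and $\mb B\ge\mb 0$, with cost $\mb d^T\mb y=1$. At a permutation $\mb\nu$ of $\mb\nu_1$ with support $S$, $|S|=r$, take $\mb y=\tfrac{1}{\sqrt m+r-1}\mb 1_S$; then $(\mb B\mb y)_i=\tfrac{1}{\sqrt m+r-1}\bigl(1+\tfrac{r-1}{\sqrt m}\bigr)=\tfrac{1}{\sqrt m}=\nu_i$ for $i\in S$ and $(\mb B\mb y)_i\ge 0$ for $i\notin S$, with cost $\tfrac{r}{\sqrt m+r-1}\le 1$; at $\mb 0$, take $\mb y=\mb 0$. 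Since the inner minimum over $\mb y$ is a convex function of the right-hand side, its maximum over ${\cal U}$ is attained at an extreme point, so $z_{\sf AR}({\cal U})\le 1$.

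The core of the proof is the lower bound on the first-stage cost, and my plan is a symmetrization argument. Because $\mb A=\mb B$, $\mb c=\tfrac1{15}\mb e$, $\mb d=\mb e$ and ${\cal U}$ are all invariant under coordinate permutations (note that $\mb B=(1-\tfrac1{\sqrt m})\mb I+\tfrac1{\sqrt m}\mb e\mb e^T$ commutes with every permutation matrix), the feasible region of the affine problem, which is convex, and its objective $\mb c^T\mb x+\max_{\mb h\in{\cal U}}\mb d^T(\mb P\mb h+\mb q)$, which is convex, are both invariant under the action $(\mb x,\mb P,\mb q)\mapsto(\sigma\mb x,\sigma\mb P\sigma^{-1},\sigma\mb q)$ for $\sigma$ a permutation. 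Averaging any feasible affine policy over the permutation group therefore produces a feasible policy with no larger objective and with the \emph{same} first-stage cost $\mb c^T\mb x$ (that term is linear and $\mb c$ is permutation-invariant). Hence every optimal affine solution has the same first-stage cost as its symmetrization, which is an optimal permutation-invariant policy, and so has the form $\mb x=\beta\mb e$, $\mb q=\rho\mb e$, $\mb P=a\,\mb I+b\,\mb e\mb e^T$ with $\beta,\rho\ge0$. Substituting this parametrization together with the extreme points $\mb 0,\mb e_1,\mb\nu_1$ into the feasibility, nonnegativity and epigraph constraints collapses the affine problem to a linear program in the constant number of scalars $\beta,\rho,a,b$ (and the epigraph variable); I would then solve this small LP and read off that its optimal value is $\Theta(\sqrt m)$ and that every optimal solution has $\beta=\Theta(1/\sqrt m)$, whence $\mb c^T\mb x^*_{\sf Aff}=\tfrac{m}{15}\beta=\Theta(\sqrt m)$.

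The main obstacle is this last step: verifying that no choice of the off-diagonal parameter $b$ or the constant shift $\rho$ can push the cost below $\Theta(\sqrt m)$, and that near-optimality forces $\beta=\Theta(1/\sqrt m)$. The intuition, which the computation should make precise, is the familiar tension behind the tight examples of~\cite{BG10,housni2017beyond}: a policy with $\mb P\approx\mb I$ covers the unit scenarios $\mb e_i$ cheaply but costs $\approx r/\sqrt m=\Theta(\sqrt m)$ on the fractional scenarios $\mb\nu$ (which have $\Theta(m)$ coordinates, each of size $1/\sqrt m$), while shrinking $\mb P$ forces the first stage to pre-cover the $\mb e_i$'s, which — because of the $\tfrac1{\sqrt m}$ off-diagonal structure of $\mb B$ — requires $\mb e^T\mb x=\Theta(\sqrt m)$; the coefficient $\mb c=\tfrac1{15}\mb e$ is calibrated precisely so that paying this first-stage cost is strictly cheaper than absorbing the recourse blow-up, which is why the optimal affine policy is compelled to incur it. Once symmetrization has reduced the problem to a handful of variables, the remaining estimates are routine constant-factor bookkeeping.
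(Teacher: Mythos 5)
Your overall strategy coincides with the paper's: establish $z_{\sf AR}({\cal U})\le 1$ by exhibiting a feasible adjustable solution at the extreme points, then use permutation-symmetry of the instance to reduce to affine policies of the form $\mb x=\alpha\mb e$, $\mb P=\theta\mb I+\mu(\mb e\mb e^T-\mb I)$, $\mb q=\lambda\mb e$, and finally show that any optimal such policy must have $\alpha=\Omega(1/\sqrt m)$, so that $\mb c^T\mb x^*_{\sf Aff}=\frac{\alpha m}{15}=\Omega(\sqrt m)$. Your averaging argument for the symmetrization is a clean, self-contained substitute for the paper's appeal to Lemmas 7 and 8 of Bertsimas and Goyal, and your feasibility certificate at the $\mb\nu$ scenarios ($\mb y=\frac{1}{\sqrt m+r-1}\mb 1_S$ rather than the paper's $\frac1m\mb e$) is a valid variant.

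The gap is in the step you yourself flag as the main obstacle and then dismiss as ``routine constant-factor bookkeeping.'' That step is the mathematical content of the lemma, and it is not executed. The paper does not ``solve the small LP'' directly; it first establishes the auxiliary benchmark $z_{\sf Aff}({\cal U})\le\frac{\sqrt m}{15}$ via the static solution $\tilde{\mb x}=\frac{1}{\sqrt m}\mb e$, and then proves the claim $\alpha\ge\frac{1}{24\sqrt m}$ by contradiction: assuming $\alpha$ is small, a nested case analysis over $\lambda$, over which term in the feasibility constraint at $\mb e_1$ carries the weight (forcing either $\theta\ge\frac13$ or $(m-1)\mu=\Omega(\sqrt m)$), and over the sign of $\mu$ (using nonnegativity of $\mb y(\mb\nu_1)$ to bound $\mu\ge-\lambda\sqrt m/r$ when $\mu<0$) shows the objective must exceed $\frac{\sqrt m}{12}$ in every case, contradicting the benchmark. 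Your outline contains neither the benchmark nor the argument ruling out the regime where a negative off-diagonal $\mu$ compensated by a positive shift $\lambda$ covers the $\mb e_i$ scenarios cheaply --- which is exactly where the paper's Cases 2.2.1 and 2.2.2 do real work. Until that analysis is carried out, the assertion that every optimal solution has $\alpha=\Theta(1/\sqrt m)$ is unsupported, and with it both conclusions of the lemma.
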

We present the proof of Lemma \ref{lem:tight-comp-aff} in Appendix \ref{apx-proofs:lem:tight-comp-aff}. From Lemma \ref{lem:tight-comp-aff} and \ref{lem:tight-comp-paff}, we conclude that our policy is near-optimal whereas affine policy is $ \Omega ( \sqrt{m})$ away from the optimal adjustable solution for the instance \eqref{ex:bad}. Hence our policy provides a significant improvement.  We would like to note that since $\hat{\cal U}$ is a simplex, an affine policy is optimal for $\Pi_{\sf AR}(\hat{\cal U})$. In particular, we have the following
\[z_{\sf AR}({\cal U}) \leq z_{\sf AR}(\hat{\cal U}) = z_{\sf Aff}(\hat{\cal U}) \leq O \left(1+\frac{1}{\sqrt{m}}\right) \cdot z_{\sf AR}({\cal U}),\]
where the first inequality follows as $\hat{\cal U}$ dominates ${\cal U}$ and the last inequality follows from Lemma~\ref{lem:tight-comp-paff}. Moreover, from Lemma~\ref{lem:tight-comp-aff}, we know that for instance~\eqref{ex:bad},
\[ z_{\sf Aff}({\cal U}) = \Omega(\sqrt{m}) \cdot z_{\sf AR}({\cal U}).\]
Therefore,
\[z_{\sf Aff}({\cal U}) = \Omega(\sqrt{m}) \cdot z_{\sf Aff}(\hat{\cal U}),\]
which is quite surprising since $\hat{\cal U}$ dominates ${\cal U}$. We would like to emphasize that $\hat{\cal U}$ only dominates ${\cal U}$ and does not contain it and this is crucial to get a significant improvement for our piecewise affine policy constructed through the dominating set.
\vspace{2mm}
\\
\noindent
{\bf Comparison to re-solving policy:}
In many applications, a practical implementation of affine policy only implements the first stage solution $\mb x^*_{\sf Aff}$ and re-solve (or recompute) the second-stage solution once the uncertainty is realized. The performance of such a re-solving policy is at least as good as affine policy and in many cases significantly better. Lemma \ref{lem:tight-comp-aff} shows that for instance \eqref{ex:bad}, such a re-solving policy is $\Omega(\sqrt{m})$ away from the optimal adjustable policy whereas we show in  Lemma \ref{lem:tight-comp-paff}
that our piecewise affine policy is near-optimal. Hence, our piecewise affine policy for instance \eqref{ex:bad} is  performing significantly better not only than affine policy but also the re-solving policy.
}
%We would like to note that in many applications, the only relevant part of the solution is the first-stage decision $\mb x$. This happens when the decision maker is able to solve the second-stage LP once the uncertainty $ \mb h$ is known. In this paper, we give an example where our policy is significantly better than a resolving policy with a first-stage $\mb x$ provided by an optimal affine solution. In fact, Lemma \ref{lem:tight-comp-aff} show that for instance \eqref{ex:bad}, our piecewise affine policy is near -optimal whereas the resolving policy with the first stage optimal affine solution  is $\Omega(\sqrt{m})$ away from the optimal adjustable policy.

\section{General uncertainty set} \label{sec:gen}

In this section, we consider the case of general uncertainty sets. The main challenge in our framework of constructing the piecewise affine policy is the choice of the dominating simplex, $\hat{\cal U }$. More specifically, the choice of $\beta$ and $\mb v \in {\cal U}$ such that $ \beta \cdot {\sf conv } \left( \mb e_1,\ldots,\mb e_m, \mb v \right)$ dominates ${\cal U }$. For a permutation invariant set, ${\cal U}$, we choose $\mb v= \gamma  \mb e$ and we can efficiently find $ \beta$ using Lemma~\ref{thm:beta} to construct the dominating set. However, this does not extend to general sets and we need a new procedure to find those parameters.

Theorem \ref{thm:addi} shows that to construct a good piecewise affine policy over ${\cal U}$, it is sufficient to find $\beta$ and $\mb v \in {\cal U}$  such that for all $\mb h \in {\cal U}$
\begin{equation} \label{eq:gn:ine}
  \frac{1}{\beta} \sum_{i=1}^m \left( h_i -\beta v_i  \right)^{+} \leq 1 .
\end{equation}
In this section, we present an iterative algorithm to find such $\beta$ and $\mb v \in {\cal U}$ satisfying~\eqref{eq:gn:ine}. In each iteration $t$, the algorithm maintains a candidate solution, $\beta^t$ and $\mb v^t \in {\cal U}$. Let $\mb u^t = \beta^t \cdot v^t$.  The algorithm solves the following maximization problem:
\begin{equation}\label{eq:algo2-iter}
\max_{\mb h \in {\cal U}} \sum_{i=1}^m \left( h_i - u^t_i  \right)^{+}
\end{equation}
The algorithm stops if the optimal value is at most $\beta^t$ in which case, Condition \eqref{eq:gn:ine} is verified for all $h \in {\cal U}$. Otherwise, let $\mb h^t$ be an optimal solution of problem ~\eqref{eq:algo2-iter}. The current solutions are updated as follows:
\[
\begin{aligned}
\beta^{t+1} & = \beta^t + 1 \\
u^{t+1}_i & = \min \left( 1, u^t_i + h^t_i\right).
\end{aligned}
\]
This corresponds to updating $\mb v^{t+1} = \frac{1}{\beta^{t+1}} \cdot \mb u^{t+1}$. Algorithm~\ref{algo2} presents the steps in detail.

\begin{algorithm}[t]
\caption{Computing $\beta$  and $\mb v$ for general uncertainty sets}\label{algo2}
\begin{algorithmic}[1]
%\Procedure{}{}
\State Initialize $ t =0, \;  \mb u^0 =0$
\While{ $  \left\{  \underset{\mb h \in {\cal U}} \max \sum_{i=1}^m \left( h_i-u_i^t\right)^+ > t  \right\} $}
\State $ \mb h^t \in  \underset{\mb h \in {\cal U}}{  \sf argmax} \sum_{i=1}^m \left( h_i-u_i^t \right)^+$
 \For{ $i=1,\ldots,m$}
  \If {$ u_i^t= 1$} $ h_i^t= 0$ \EndIf
 \State $ u_i^{t+1}= \min ( 1, u_i^t +h_i^t)$
 \EndFor
\State $t  =t+1 $
\EndWhile
\State \Return $ \beta =t, \; \mb v= \frac{\mb u^t}{\beta}  . $
%\EndProcedure
\end{algorithmic}
\end{algorithm}

%The correctness of the algorithm is straightforward.

%\red{
The number of $ \beta$-iterations  is finite since ${\cal U}$ is compact.  The following theorem shows that $ \mb v $ returned by the algorithm belongs to ${\cal U}$ and the corresponding piecewise affine policy is a $O(\sqrt m)$-approximation for the adjustable  problem~\eqref{eqn:ar}.

\begin{theorem} \label{thm:bound:beta}
Suppose Algorithm~\ref{algo2} returns $\beta$, $\mb v$. Then $\mb v \in {\cal U}$. Furthermore, the piecewise affine policy~\eqref{picewise_sl} with parameters $\beta$ and $\mb v$ gives a $O(\sqrt m)$-approximation for the adjustable problem~\eqref{eqn:ar}.
\end{theorem}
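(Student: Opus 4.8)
The plan is to establish the two assertions separately: first that $\mb v \in {\cal U}$, and then that $\beta = O(\sqrt m)$, after which the approximation guarantee follows immediately from Theorem~\ref{thm:addi}. For the membership claim I would use a telescoping/averaging argument. In each iteration the update obeys $u^{t+1}_i = \min(1,\, u^t_i + h^t_i) \le u^t_i + h^t_i$, where, after the zeroing step, $\mb h^t \in {\cal U}$ (the zeroing only sets some coordinates to $0$, which keeps the vector in ${\cal U}$ by down-monotonicity). Since $\mb u^0 = \mb 0$, an easy induction gives $\mb u^t \le \sum_{s=0}^{t-1} \mb h^s$ coordinatewise. Hence, if the algorithm halts with counter value $\beta = t$, then $\mb v = \mb u^t/t \le \frac1t \sum_{s=0}^{t-1} \mb h^s$, and the right-hand side is a uniform convex combination of $t$ points of ${\cal U}$, so it lies in ${\cal U}$ by convexity; since $\mb 0 \le \mb v \le \frac1t\sum_s \mb h^s$, down-monotonicity yields $\mb v \in {\cal U}$. (Because ${\cal U}$ is full-dimensional with $\mb e_i \in {\cal U}$, the while-condition holds at $t=0$, so $\beta \ge 1$ and the division is well defined.)

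For the approximation guarantee the crux is to bound the number of iterations. I would track the potential $\Phi^t = \sum_{i=1}^m u^t_i$. A short case analysis shows $u^{t+1}_i - u^t_i = \min(h^t_i,\, 1-u^t_i)$, and, using $0 \le h^t_i \le 1$ (this is where Assumption~1's ${\cal U}\subseteq[0,1]^m$ enters) together with $0 \le u^t_i \le 1$, one checks $\min(h^t_i,\, 1-u^t_i) \ge (h^t_i - u^t_i)^+$. Therefore $\Phi^{t+1} - \Phi^t \ge \sum_{i=1}^m (h^t_i - u^t_i)^+ > t$, the last step being exactly the while-loop condition at iteration $t$ (the zeroing does not change this sum, since $(h_i - u^t_i)^+ = 0$ whenever $u^t_i = 1$). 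Summing over $t = 0,\dots,\beta-1$ with $\Phi^0 = 0$ and $\Phi^\beta \le m$ gives $\binom{\beta}{2} < m$, hence $\beta = O(\sqrt m)$.

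To finish, when the algorithm stops the negation of the while-condition reads $\sum_{i=1}^m (h_i - u^\beta_i)^+ \le \beta$ for every $\mb h \in {\cal U}$; since $\mb u^\beta = \beta \mb v$, this is precisely $\frac1\beta \sum_{i=1}^m (h_i - \beta v_i)^+ \le 1$ for all $\mb h \in {\cal U}$, i.e.\ condition~\eqref{eq:gn:ine}. Theorem~\ref{thm:addi} then applies with this $\beta$ and this $\mb v \in {\cal U}$, so the piecewise affine policy~\eqref{picewise_sl} is feasible and gives a $2\beta = O(\sqrt m)$-approximation for $\Pi_{\sf AR}({\cal U})$~\eqref{eqn:ar}.

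The main obstacle is the potential-function bound $\beta = O(\sqrt m)$: one has to pick the right potential $\sum_i u^t_i$, verify the per-step inequality $\min(h^t_i, 1-u^t_i) \ge (h^t_i - u^t_i)^+$ (which genuinely relies on ${\cal U}\subseteq[0,1]^m$), and confirm that the zeroing step in the algorithm affects neither the while-condition sum nor the telescoping. Everything else — the membership $\mb v \in {\cal U}$ and the reduction to Theorem~\ref{thm:addi} — is routine once this counting argument is in place.
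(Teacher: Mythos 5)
Your proof is correct, and both halves reach the same conclusions as the paper, but your counting argument for $\beta = O(\sqrt m)$ is organized differently. The paper drops the $u$-terms via $h_i^t \ge (h_i^t - u_i^t)^+$ to get $\sum_i h_i^t > t$, sums this to obtain $\sum_{t}\sum_i h_i^t > \tfrac12\beta(\beta-1)$, and then separately observes that each coordinate satisfies $\sum_t h_i^t \le u_i^{\beta}+1 \le 2$ (because once $u_i^t$ hits $1$ the zeroing step kills all later $h_i^{t'}$), yielding $\beta(\beta-1) \le 4m$. You instead track the monotone potential $\Phi^t = \sum_i u_i^t$ directly, verify the per-step identity $u_i^{t+1}-u_i^t = \min(h_i^t,\,1-u_i^t) \ge (h_i^t-u_i^t)^+$ (which indeed uses ${\cal U}\subseteq[0,1]^m$ and holds in both cases $h_i^t \le u_i^t$ and $h_i^t > u_i^t$), and telescope against $\Phi^\beta \le m$ to get the slightly sharper $\beta(\beta-1) < 2m$. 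Your route avoids the paper's ``at most $2$ per coordinate'' overcounting step and never needs to reason about what happens to $h_i^{t'}$ after a coordinate saturates; the paper's route is more elementary in that it only manipulates the $\mb h^t$ vectors and does not require the exact form of the update. The membership argument ($\mb u^\beta \le \sum_t \mb h^t$, convexity, down-monotonicity) and the final reduction to Theorem~\ref{thm:addi} via the negated while-condition are the same in both proofs; you make the latter step explicit where the paper leaves it to the surrounding text. Both give a $2\beta = O(\sqrt m)$ guarantee, so there is no gap.
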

\begin{proof}
Suppose Algorithm~\ref{algo2} returns $\beta, \mb v$. Note that $\beta$ is the number of iterations in Algorithm \ref{algo2}. First, we have
$$ \mb u^{\beta} \leq \sum_{t=0}^{\beta -1} \mb h^t .$$
Moreover $\frac{1}{\beta}\cdot \sum_{t=0}^{\beta -1} \mb h^t \in {\cal U}$ since ${\cal U}$ is convex. Therefore $ \mb v = \frac{\mb u^{\beta}}{\beta} \in {\cal U}$ by down-monotonicity of ${\cal U}$.

\red{Let us prove that $\beta =O ( \sqrt{m} )$. First, note that, when we set $h_i^t=0$ for $u_i^t=1$, the objective of the maximization problem in the algorithm does not change and $\mb h^t $ still belongs to $ {\cal U}$ by down-monotonicity}. Then, for any $ t=0, \ldots , \beta-1$
$$   \sum_{i=1}^m \left( h_{i}^t- u_i^t \right)^+ > t  .$$
Moreover, $h_i^t \geq 0 $ and $u_i^t \geq 0 $, hence $h_i^t \geq (h_i^t - u_i^t )^+$ and therefore for all  $t=	0,\ldots,\beta-1$
$$\sum_{i=1}^m  h_i^t >  t.$$
Then,
\begin{equation} \label{eq:tech:ineq}
\sum_{t=0}^{\beta-1} \sum_{i=1}^m  h_i^t >   \sum_{t=0}^{\beta-1}  t  =    \frac{1}{2}    \beta ( \beta -1 ) .
\end{equation}
Note that, if $u_i^t=1$ at some iteration $t$, then $h_i^{t'}=0$ for any $t' \geq t$. Hence, for any $i\in [m]$,
\begin{equation} \label{eq:tech:ineq2}
 \sum_{t=0}^{\beta-1}  h_i^t \leq   u_i^{\beta} +1 \leq 2 .
\end{equation}
Hence, from \eqref{eq:tech:ineq} and  from \eqref{eq:tech:ineq2} we get,
$ 2m >   \frac{1}{2}    \beta ( \beta -1 )$, i.e.,  $ \beta \cdot (\beta -1) \leq 4m ,$
which implies, $\beta = O ( \sqrt{m} ).$
 \end{proof}

We note that the maximization problem~\eqref{eq:algo2-iter} that Algorithm~\ref{algo2} solves in each iteration $t$ is not a convex optimization problem.  However, ~\eqref{eq:algo2-iter} can be formulated as the following MIP:
\begin{equation}\label{eqn:IP}
\begin{aligned}
\max \; & \sum_{i=1}^m z_i \\
&   z_i \leq (h_i-u_i^t) +(1-x_i) \; \; \forall i \in [m],     \\
& z_i \leq x_i \; \; \forall i \in [m]        \\
&  z_i \geq 0, \; \; \forall i \in [m]       \\
&  x_i \in \{0,1\} \; \; \forall i \in [m] \\
&\mb h \in{\cal U} .
\end{aligned}
\end{equation}
Therefore, for general uncertainty set ${\cal U}$, the procedure to find $\beta$ and $\mb v \in {\cal U}$ is computationally more challenging than for the case of permutation invariant sets.

\textbf{Remark.}
\red{Since the computation of $\beta$ and $\mb v$ depends only on ${\cal U}$, and not on the problem parameters (i.e., the parameters $\mb A, \mb B, \mb c$ and $\mb d$)}, one can compute them offline and then use them to efficiently construct a good piecewise affine policy.

\vspace{3mm}
\noindent {\bf Connection to Bertsimas and Goyal~\cite{BG10}}. We would like to note that Algorithm~\ref{algo2} is quite analogous to the explicit construction of good affine policies in~\cite{BG10}. The analysis of the $O(\sqrt m)$-approximation bound for affine policies is based on the following projection result (which is a restatement of Lemma 8 and Lemma 9  in~\cite{BG10}).

\begin{theorem}{\bf[Bertsimas and Goyal 2011]}\label{thm:berti-goyal}
Consider any uncertainty set ${\cal U}$ satisfying Assumption $1$. There exists  $\beta \leq \sqrt m$, $\mb v \in {\cal U}$ such that
\[ \sum_{ j : \beta v_j < 1} \; h_j \leq \beta, \; \forall \mb h \in {\cal U}.\]
\end{theorem}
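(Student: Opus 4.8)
The plan is to mirror the iterative construction already used in Algorithm~\ref{algo2} and Theorem~\ref{thm:bound:beta}, but with a slightly different stopping rule tailored to the conclusion we want. I would maintain a ``cumulative'' vector $\mb u^t \in [0,1]^m$, initialized at $\mb u^0 = \mb 0$, and at iteration $t$ let $S_t = \{ j : u^t_j < 1\}$ be the set of coordinates that have not yet saturated. I then solve $\max_{\mb h \in {\cal U}} \sum_{j \in S_t} h_j$; if the optimal value is at most $t+1$ (or some suitable threshold linear in the iteration count) I stop, and otherwise I pick an optimal $\mb h^t$, set $u^{t+1}_j = \min(1, u^t_j + h^t_j)$ for $j\in S_t$ and leave the saturated coordinates fixed. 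Finally I output $\beta = t$ (the number of iterations at termination) and $\mb v = \mb u^t/\beta$, exactly as in Algorithm~\ref{algo2}.

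The key steps, in order: (1) argue $\mb v \in {\cal U}$ --- since each $\mb h^t \in {\cal U}$ (using down-monotonicity to zero out already-saturated coordinates without leaving ${\cal U}$), the average $\tfrac1\beta\sum_{t=0}^{\beta-1}\mb h^t$ lies in ${\cal U}$ by convexity, and $\mb u^\beta \le \sum_{t=0}^{\beta-1}\mb h^t$ coordinatewise, so $\mb v = \mb u^\beta/\beta \le \tfrac1\beta\sum \mb h^t$ and down-monotonicity gives $\mb v \in {\cal U}$; (2) bound the number of iterations --- each non-terminating iteration contributes more than $t$ to $\sum_j h^t_j$ (since $\sum_{j\in S_t} h^t_j > t$ and $h^t_j = 0$ for $j \notin S_t$), so $\sum_{t=0}^{\beta-1}\sum_j h^t_j > \tfrac12\beta(\beta-1)$, while each coordinate satisfies $\sum_{t=0}^{\beta-1} h^t_j \le u^\beta_j + 1 \le 2$ because once a coordinate saturates it never increases again; combining gives $2m > \tfrac12\beta(\beta-1)$, hence $\beta \le \sqrt m$ up to the obvious additive constant; (3) verify the conclusion at termination --- when the loop stops, $\max_{\mb h\in{\cal U}}\sum_{j\in S_\beta} h_j \le \beta$, and since $\beta v_j < 1 \iff u^\beta_j < 1 \iff j \in S_\beta$, this is exactly $\sum_{j:\beta v_j < 1} h_j \le \beta$ for all $\mb h \in {\cal U}$.

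The main obstacle is calibrating the stopping threshold so that steps (2) and (3) both go through cleanly: the threshold must be large enough (growing linearly in $t$) that the ``contributes more than $t$'' bound in step (2) yields the quadratic lower sum $\tfrac12\beta(\beta-1)$, yet it must be exactly the quantity $\beta$ at termination so that step (3) delivers the stated inequality $\sum_{j:\beta v_j<1} h_j \le \beta$ rather than something weaker. A secondary subtlety is handling the base case and ensuring $\beta \ge 1$ (so that $\mb v$ is well-defined and $\beta$ is a genuine scaling factor); this is immediate since $\mb e_i \in {\cal U}$ forces at least one iteration. I would also double-check that the ``zeroing out saturated coordinates'' step does not decrease the objective of the maximization subproblem --- it cannot, since those coordinates are excluded from $S_t$ anyway --- which keeps $\mb h^t \in {\cal U}$ by down-monotonicity and preserves the inequalities used in the counting argument.
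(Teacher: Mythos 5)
The paper does not actually prove this statement: Theorem~\ref{thm:berti-goyal} is quoted verbatim as a restatement of Lemmas~8 and~9 of Bertsimas and Goyal~\cite{BG10}, and is used only to draw an informal analogy with Algorithm~\ref{algo2}. So there is no in-paper proof to compare against; your attempt has to stand on its own, and it is essentially a re-run of the paper's proof of Theorem~\ref{thm:bound:beta} with the objective $\sum_{j\in S_t} h_j$ in place of $\sum_i (h_i-u_i^t)^+$. Steps (1) and (3) are fine: $\mb v\in{\cal U}$ follows from convexity and down-monotonicity exactly as in the paper, and at termination $\{j:\beta v_j<1\}=S_\beta$, so the stopping rule with threshold $t$ delivers precisely $\sum_{j:\beta v_j<1}h_j\le\beta$.

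The genuine gap is in step (2), and it is not an ``obvious additive constant.'' Your counting gives $\sum_{t=0}^{\beta-1}\sum_j h_j^t>\tfrac12\beta(\beta-1)$ and $\sum_t h_j^t\le u_j^\beta+1\le 2$ per coordinate, hence $\beta(\beta-1)<4m$, i.e.\ $\beta\le 2\sqrt m+1$ --- a multiplicative factor of $2$ away from the claimed $\beta\le\sqrt m$. Moreover, this cannot be fixed by recalibrating the threshold within your scheme: if you continue while the maximum exceeds $ct$, termination yields $\sum_{j\in S_\beta}h_j\le c\beta$, so you need $c\le 1$ to conclude $\sum h_j\le\beta$, but then the counting gives only $\beta\lesssim 2\sqrt{m/c}\ge 2\sqrt m$. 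The per-coordinate mass bound of $2$ (one unit accumulated before saturation plus up to one unit in the crossing step, since ${\cal U}\subseteq[0,1]^m$) is also tight in general. So your argument proves the theorem with $\beta=O(\sqrt m)$ --- which is all the present paper ever uses --- but not the literal bound $\beta\le\sqrt m$; matching that constant requires the finer argument in~\cite{BG10} rather than the Algorithm~\ref{algo2} counting.
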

Suppose $J = \{ j \; |\; \beta v_j < 1\}$. The affine solution in~\cite{BG10} covers $\beta \mb{v}$ using the static component and the components $J$ using a linear solution.  The linear solution does not exploit the coverage of $\beta v_i$ for $ i \in J$ from the static solution. The approximation factor is $O(\beta)$ since for all $\mb h \in {\cal U}$,  $\sum_{j \in J} h_j \leq \beta$.

Our piecewise affine solution given by Algorithm~\ref{algo2} finds analogous $ \beta$, $\mb v \in {\cal U}$ such that
\[ \sum_{i=1}^m ( h_i  - \beta v_i) _+ \leq \beta, \; \forall \mb h \in {\cal U}.\]
\red{In the piecewise affine solution, the static component covers  $\beta \mb v$ and the remaining part $( \mb h  -  \beta \mb v)_+$ is covered by a piecewise-linear function that exploits the coverage of $\beta \mb v$}. This allows us to improve significantly as compared to the affine policy for a large family of uncertainty sets. We would like to note again that our policy is not necessarily an optimal one and there can be examples where affine policy is better than our policy.

\section{A worst case example for the domination policy}\label{sec:worst-case}

From Theorem \ref{thm:bound:beta}, we know that our piecewise affine policy gives an $O(\sqrt{m})$-approximation for the adjustable robust problem~\eqref{eqn:ar}. In this section, we show that this bound is tight for the following budget of uncertainty set:
\begin{equation}\label{set-ones}
 {\cal U}= \left\{ \mb h \in \mathbb R^{m}_+ \;  \bigg\vert \;     \sum_{i=1}^m h_i= \sqrt{m} , \; 0 \leq h_i \leq 1 \ \;  \forall i \in [m]       \right\}.
\end{equation}
 We show that our dominating simplex based piecewise affine policy gives an $\Omega (\sqrt{m})$-approximation to the adjustable robust problem~\eqref{eqn:ar}. The lower bound of $\Omega (\sqrt{m})$ holds even when we consider more general dominating sets than simplex. We show that for any $ \epsilon > 0 $, there is no polynomial number of points in ${\cal U}$ such that the convex hull of those points scaled by $ m ^{\frac{1}{2}-\epsilon}$  dominates $ {\cal U}$. In particular, we have the following theorem.

\begin{theorem}\label{thm:worstcase}
Given any $0 < \epsilon < 1/2$, and $k \in {\mathbb N}$, consider the budget of uncertainty set, ${\cal U}$~\eqref{set-ones} with $m$ sufficiently large. Let $P(m) \leq m^k$. Then for any $\mb{z}_1,\mb{z}_2,\ldots\mb{z}_{P(m)} \in {\cal U}$,  the set
\[ {\hat{\cal U}}= m^{\frac{1}{2}-\epsilon} \cdot { \sf{conv}} \left( \mb{z}_1,\mb {z}_2,\ldots\mb{z}_{P(m))}\right),\]
does not dominate  ${\cal U}$.
\end{theorem}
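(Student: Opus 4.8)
The plan is to refute domination by exhibiting one point $\mb h^\star\in{\cal U}$ that no point of $\hat{\cal U}$ can dominate, certified by a short ``weighting'' argument. Put $s:=\lceil\sqrt{m}\,\rceil$ and $\theta:=\sqrt{m}/s\in(1/2,1]$, and for a set $S\subseteq[m]$ with $|S|=s$ let $\mb h^\star:=\theta\sum_{j\in S}\mb e_j$; then $\mb h^\star\in{\cal U}$, since its coordinates lie in $(0,1]$ and sum to $\sqrt m$. I claim it suffices to find a probability vector $\mb\nu$ supported on $S$ with $\sum_{j\in S}\nu_j z_{ij}<\tfrac12 m^{-1/2+\epsilon}$ for every $i\in[P(m)]$ (writing $z_{ij}$ for the $j$th coordinate of $\mb z_i$). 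Indeed, any point of $\hat{\cal U}$ has the form $\hat{\mb h}=m^{1/2-\epsilon}\sum_i\lambda_i\mb z_i$ with $\bs\lambda\ge\mb 0$, $\sum_i\lambda_i=1$; if $\hat{\mb h}\ge\mb h^\star$ then $\sum_i\lambda_i z_{ij}\ge\theta m^{-1/2+\epsilon}$ for all $j\in S$, so taking the $\mb\nu$-weighted average over $j\in S$ gives
\[
\sum_{i}\lambda_i\Big(\sum_{j\in S}\nu_j z_{ij}\Big)\ \ge\ \theta\, m^{-1/2+\epsilon}\ \ge\ \tfrac12\, m^{-1/2+\epsilon},
\]
whence some $i$ has $\sum_{j\in S}\nu_j z_{ij}\ge\tfrac12 m^{-1/2+\epsilon}$, contradicting the choice of $\mb\nu$. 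Thus no $\hat{\mb h}\in\hat{\cal U}$ dominates $\mb h^\star$, and it only remains to construct such a $\mb\nu$.

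For the construction I would use the probabilistic method. Draw $S$ uniformly at random among the $s$-subsets of $[m]$ and take $\mb\nu$ to be the uniform distribution on $S$. For a fixed $i$, $\sum_{j\in S}\nu_j z_{ij}=\tfrac1s\sum_{j\in S}z_{ij}$, where $\sum_{j\in S}z_{ij}$ is a sum of $s$ values drawn without replacement from $z_{i1},\dots,z_{im}\in[0,1]$; since $\mb z_i\in{\cal U}$, this sum has expectation $\tfrac sm\sum_j z_{ij}=s\sqrt m/m=s/\sqrt m\in[1,1+m^{-1/2}]$. Hence the target value $\tfrac12 m^{-1/2+\epsilon}$ equals $a:=\tfrac12 m^{\epsilon}\ (\ge1$ for $m$ large$)$ times the expectation of $\tfrac1s\sum_{j\in S}z_{ij}$, and the multiplicative Chernoff bound in the large‑deviation regime $\Pr[\mathrm{sum}\ge a\cdot\mathbb{E}\,\mathrm{sum}]\le (e/a)^{a\cdot\mathbb{E}\,\mathrm{sum}}\le (e/a)^{a}$ (valid for sampling without replacement just as for independent samples) gives, for $m$ large,
\[
\Pr\left[\ \sum_{j\in S}\nu_j z_{ij}\ \ge\ \tfrac12\, m^{-1/2+\epsilon}\right]\ \le\ \exp\!\big(-c\, m^{\epsilon}\log m\big)
\]
for some constant $c=c(\epsilon)>0$ (concretely $c=\epsilon/4$ works). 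Since $P(m)\le m^{k}$, a union bound makes the probability that \emph{some} $i$ violates the inequality at most $m^{k}\exp(-c\, m^{\epsilon}\log m)<1$ for $m$ large, because $m^{\epsilon}\gg k$. Therefore a set $S$ and its uniform $\mb\nu$ with $\sum_{j\in S}\nu_j z_{ij}<\tfrac12 m^{-1/2+\epsilon}$ for all $i$ exist, and combined with the first paragraph this proves the theorem.

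The step I expect to be the crux is the concentration estimate, and in particular the need to use a \emph{multiplicative} (Chernoff/Bernstein‑type) bound rather than the additive Hoeffding bound. An additive bound on $\tfrac1s\sum_{j\in S}z_{ij}$ only controls fluctuations of order $s^{-1/2}=m^{-1/4}$ about the mean $m^{-1/2}$, which is useless once the allowed slack $\tfrac12 m^{-1/2+\epsilon}-m^{-1/2}$ is of smaller order, i.e.\ for $\epsilon\le 1/4$; exploiting that the mean is as small as $m^{-1/2}$ — a direct consequence of $\sum_j z_{ij}=\sqrt m\ll m$ — is exactly what pushes the argument through for every $\epsilon<1/2$, with the post‑union‑bound failure probability super‑polynomially small. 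The remaining ingredients — the elementary weighting/duality reduction of the first paragraph, the bookkeeping that all the ``$m$ sufficiently large'' conditions can be met simultaneously, and the (standard) fact that sampling without replacement satisfies the same upper‑tail Chernoff bounds as independent sampling — are routine.
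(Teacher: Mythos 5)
Your proof is correct, and it takes a genuinely different route from the paper's. The paper argues by contradiction: it first invokes Caratheodory to replace the $\mb z_i$ by at most $(m+1)P(m)$ \emph{extreme points} of ${\cal U}$ (0--1 vectors with $\sqrt m$ ones), observes that domination of the indicator vector of each size-$\sqrt m$ subset $I$ forces some extreme point $\mb y_j$ to satisfy $\sum_{i\in I}y_{ji}\ge m^{\epsilon}$, applies the pigeonhole principle to find one $\mb y$ serving at least $\binom{m}{\sqrt m}/Q(m)$ subsets, and then contradicts this by counting (via Stirling) how many such subsets a single 0--1 vector with $\sqrt m$ ones can serve. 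You instead exhibit a single non-dominated point $\mb h^\star$ directly: your averaging certificate (a probability vector $\mb\nu$ on $S$ making all weighted averages $\sum_j\nu_j z_{ij}$ small) is a clean LP-duality-style witness of non-domination, and you produce $S$ by the probabilistic method with a multiplicative Chernoff bound for sampling without replacement, correctly exploiting that the mean $\tfrac1s\sum_{j\in S}z_{ij}\approx m^{-1/2}$ is a factor $m^{\epsilon}$ below the threshold. Your route avoids both the Caratheodory reduction and the Stirling bookkeeping, works with arbitrary (not necessarily extreme) $\mb z_i$, and, since the post-union-bound failure probability is $\exp(-\Theta(m^{\epsilon}\log m))$, actually establishes the stronger statement that no sub-exponential number of points suffices, whereas the paper's counting argument is purely deterministic and elementary but is tailored to polynomial $P(m)$. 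The only ingredients you should make fully explicit in a final write-up are the standard facts you flag yourself: the extension of the multiplicative Chernoff bound to $[0,1]$-valued summands and its validity under sampling without replacement (Hoeffding, 1963).
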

%We defer the proof of Theorem~\ref{thm:worstcase} to the full version of the paper.

\begin{proof}
Suppose for a sake of contradiction that there exists $\mb{z}_1,\mb{z}_2,\ldots,\mb{z}_{P(m)} \in {\cal U}$ such that ${\hat{\cal U}}= m^{\frac{1}{2}-\epsilon} \cdot { \sf{conv}} \left( \mb{z}_1,\mb {z}_2,\ldots\mb{z}_{P(m)}\right)$ dominates  ${\cal U}$.

By Caratheodory's theorem, we know that any point in ${\cal U}$ can be expressed as a convex combination of at most $m+1$ extreme points of ${\cal U}$. Therefore
$${\hat{\cal U}} \subseteq m^{\frac{1}{2}-\epsilon} \cdot { \sf{conv}} \left( \mb {y}_1,\mb {y}_2,\ldots, \mb{y}_{Q(m))}\right), $$
where $\mb{y}_1,\mb{y}_2,\ldots,\mb{y}_{Q(m)} $ are extreme points of ${\cal U}$ and $$Q(m) \leq  (m+1) \cdot P(m) = O(m^{k+1}).$$

\red{Consider any $ I \subseteq \{1,2,\ldots,m\}$ such that $ \vert I \vert = \sqrt{m}$. Let $ {\mb h}$ be an extreme point of ${\cal U}$ corresponding to $I$, i.e., $ h_i= 1 $ if $ i \in I$ and $  h_i= 0 $ otherwise. Since we assume that $\hat{\cal U}$ dominates $\cal U$,  there exists $ {\hat{\mb h}} \in {\hat{\cal U}}$ such that $ \mb h \leq \hat{\mb h}$}. Let
$$ {\hat{\mb h}}=m^{\frac{1}{2}-\epsilon}  \sum_{j=1}^{Q(m)} { \alpha}_j \mb{y}_j,$$ where $ \sum_{j=1}^{Q(m)} { \alpha}_j=1$ and ${ \alpha}_j \geq 0 $ for all $j=1,2,\ldots,Q(m)$. \red{We have
$$ 1= h_i \leq \hat{h}_i \; \; \forall i \in I$$
i.e.
$$ 1 \leq m^{\frac{1}{2}-\epsilon}  \sum_{j=1}^{Q(m)} { \alpha}_j {y_{ji}}, \; \forall i \in I.$$ }
Summing over $i \in I$, we have,
\[\sqrt{m}= \vert I \vert \leq m^{\frac{1}{2}-\epsilon}  \sum_{i \in I} \sum_{j=1}^{Q(m)} { \alpha}_j {y_{ji}}.\]
Therefore,
\[
\begin{aligned}
 m^{\epsilon} & \leq   \sum_{j=1}^{Q(m)} { \alpha}_j \sum_{i \in I}  {y_{ji}}, \\
 &  \leq \left(  \sum_{j=1}^{Q(m)} { \alpha}_j \right) \cdot \underset{ j=1,2,\ldots,Q(m)} \max\sum_{i \in I}  {y_{ji}}\\
 &=  \underset{ j=1,2,\ldots,Q(m)} \max\sum_{i \in I}  {y_{ji}} = \sum_{i \in I}  {y_{j^*i}},
\end{aligned}
\]
\red{where the second inequality follows from taking the max of the inner sum over indices $j$ and  $j^*$ is the index corresponding to the maximum sum.}

\red{ Therefore, for any $ I \subseteq \{1,2,\ldots,m\}$ with cardinality $ \vert I \vert = \sqrt{m}$, there exists $j=1,2,\ldots, Q(m)$ such that
$$ \sum_{i \in I}  {y_{ji}} \geq m^{\epsilon}.$$
}
\red{Denote ${\cal F} = \left\{  I \subseteq \{1,2,\ldots,m\} \; \big\vert \; \vert I \vert=\sqrt{m} \right\}$ which represents the set of all subsets of $\{1,2,\ldots,m\}$ with cardinality $\sqrt{m}$.} Note that the cardinality of $\cal F$ is
$$ \vert {\cal F} \vert = \binom{m}{\sqrt{m}} .$$
We know that for any $ I \in {\cal F} $ there exists $\mb y _j \in \{ \mb{y}_1,\mb{y}_2,\ldots\mb{y}_{Q(m)} \} $ such that
\[ \sum_{i \in I}  {y_{ji}} \geq m^{\epsilon}.\]
\red{ We have $\binom{m}{\sqrt{m}}$ possibilities for $I$ and $Q(m)$ possibilities for $\mb y_j$, hence by the pigeonhole principle, there exists a fixed $\mb y \in \{ \mb{y}_1,\mb{y}_2,\ldots\mb{y}_{Q(m)} \} $ and $ \tilde{\cal F} \subseteq {\cal F} $ such that  }
\begin{equation}\label{cond}
\begin{aligned}
& \vert \tilde{\cal F} \vert  \geq \frac{1}{Q(m)} \binom{m}{\sqrt{m}}, \mbox{ and } \\
& \\
& \sum_{i \in I}  {y_{i}}  \geq m^{\epsilon}, \; \forall I \in \tilde{\cal F}.
\end{aligned}
\end{equation}
Note that $\mb y$ is an extreme point of $ \cal U$. Hence, $\mb y$ has exactly $ \sqrt{m}$ ones and the remaining components are zeros. \red{The maximum cardinality of subsets $I \subseteq [m]$ that can be constructed to satisfy $\sum_{i \in I}  {y_{i}}  \geq m^{\epsilon}$ is}
$$ \sum_{k=m^{\epsilon}}^{k=\sqrt{m}} \binom{\sqrt{m}}{k} \cdot \binom{m-\sqrt{m}}{\sqrt{m}-k} .$$
By over counting, the above sum can be upper-bounded by
$$ \binom{\sqrt{m}}{m^{\epsilon}} \cdot \binom{m-m^{\epsilon}}{\sqrt{m}-m^{\epsilon}}.$$
\red{Therefore, cardinality of $\tilde{\cal F}$ should be less that the above upper bound, i.e.}
$$\binom{\sqrt{m}}{m^{\epsilon}} \cdot \binom{m-m^{\epsilon}}{\sqrt{m}-m^{\epsilon}} \geq  \vert \tilde{\cal F} \vert \geq \frac{1}{Q(m)} \binom{m}{\sqrt{m}} $$
Then,
\begin{equation}\label{contr}
\frac{\binom{\sqrt{m}}{m^{\epsilon}} \cdot \binom{m-m^{\epsilon}}{\sqrt{m}-m^{\epsilon}} }{\binom{m}{\sqrt{m}}} \geq \frac{1}{Q(m)}.
\end{equation}
 which is a contradiction. \red{The contradiction is derived by analyzing the order of the fractions in \eqref{contr})} (see Appendix \ref{apx-proofs:worstcase}).  \end {proof}

\red{
\section{Computational study} \label{sec:sim}
  In this section, we present a computational study to compare the performance of our policy with affine policies both in terms of objective function value of problem $\Pi_{\sf AR}({\cal U})$
\eqref{eqn:ar}
  and computation times. We explore both cases of permutation invariant sets and non-permutations invariant sets.
 \subsection{Experimental setup}
  \noindent {\bf Uncertainty sets.}
 We consider the following classes of uncertainty sets for our computational experiments.
 \begin{enumerate}
 \item {\bf Hypersphere}. We consider the following unit hypersphere defined in~\eqref{def:sphere},
\begin{equation*}
{\cal U}=\{\mb{h}\in{\mathbb R}^m_+\;|\;||\mb{h}||_2\leq 1\}.
\end{equation*}
 \item {\bf p-norm balls}. We consider the following  sets defined in Proposition~\ref{prop:p-ball}.
 \begin{equation*}
{\cal U}= \left\{ \mb h \in \mathbb R^{m}_+ \;  \big\vert \; \| \mb h\| _p \leq 1 \right\}.
\end{equation*}
For our numerical experiments, we consider the cases of $p=3$ and $p=3/2$.
\item  {\bf Budget of uncertainty set}. We consider the following set defined in \eqref{set:budget of uncertainty},
\begin{equation*}
 {\cal U}= \left\{ \mb h \in [0,1]^{m} \;  \Bigg\vert \; \sum_{i=1}^m h_i \leq k \right\}.
\end{equation*}
Here, $k$ denotes the budget. For our numerical experiments, we choose $k=c  \sqrt{m}$  where $c$ is a random uniform constant between $1$ and $2$.
\item  {\bf Intersection of budget of uncertainty sets.} We consider the following intersection of $L$ budget of uncertainty sets:
\begin{equation} \label{set:inters:budget}
{\cal U}= \left\{ \mb h \in [0,1]^m \; \Bigg\vert \; \sum_{j=1}^m  \alpha_{ij} h_j \leq 1,  \; \forall  i=1,\ldots,L \right\}.
\end{equation}
Here, $\alpha_{ij}$ are non-negative scalars. Note that the intersection of budget of uncertainty sets are not permutation invariant. For our numerical experiments, we generate $\alpha_{ij} $ i.i.d. according to absolute value of standard Gaussians and we normalize $\vert\vert \mb \alpha_i \vert\vert_2$ to 1 for all $i$
(i.e. $ \mb\alpha_{i} = \vert \mb G_i \vert  / \vert \vert \mb G_i \vert\vert_2 $ where $\mb G_i $ are i.i.d. according to ${\cal N}(0, \mb I_m )$) and we consider $L=2$ and $L=5$.
\item {\bf Generalized budget of uncertainty set.} We consider the following set
\begin{equation} \label{set:polytope}
{\cal U}= \left\{ \mb h \in [0,1]^m \; \Bigg\vert \; \sum_{\ell=1}^m h_{\ell} \leq 1 + \theta ( h_i +h_j) \; \; \forall i \neq j \right\}.
\end{equation}
This is a generalized version of the budget of uncertainty set \eqref{set:budget of uncertainty} where the budget $\theta$ is not a constant but depends on the uncertain parameter $\mb h$. In particular, the budget in the set \eqref{set:polytope} depends on the sum of the two lowest components of $\mb h$. For our numerical experiments, we choose $\theta = O(m)$.
 \end{enumerate}
}

\vspace{2mm}
\noindent\\
\red{
{\bf Instances}. We construct test instances of the adjustable robust problem \eqref{eqn:ar} as follows. We choose $n=m$, $\mb c= \mb d = \mb e $ and $ \mb A= \mb B$ where $\mb B$ is  randomly generated as
\[ \mb B = \mb I_m + \mb G,\]
where $\mb I_m$ is the identity matrix and $\mb G$ is a random normalized gaussian. In particular, for the hypersphere  uncertainty set, the budget of uncertainty set,
 the intersection of budget of uncertainty sets and the generalized budget, we conisder $G_{ij} = |Y_{ij}|/ \sqrt{m}$. For the $3$-norm ball, $G_{ij} = |Y_{ij}|/ m^{\frac{1}{3}}$ and for the $\frac{3}{2}$-norm ball, $G_{ij} = |Y_{ij}|/ m^{\frac{2}{3}},$  where $Y_{ij}$ are i.i.d. standard gaussian. We consider values of $m$ from $m=10$ to $ m=100$ in increments of $10$ and consider $50$ instances for each value of $m$.
}

\vspace{2mm}
\noindent
\red{
{\bf Our piecewise affine policy.} We construct the piecewise affine policy based on the dominating simplex $\hat{\cal U}$ as follows. For permutation invariant sets, we use the dominating simplex that can be computed in closed form. In particular, for the hypersphere uncertainty set, we use the dominating set $\hat{\cal U}$  in Proposition~\ref{prop:sphere}. For the p-norm balls, we use the dominating set $\hat{\cal U}$ in Proposition~\ref{prop:p-ball}. For the  budget of uncertainty set, we use the dominating set $\hat{\cal U}$ in Proposition~\ref{prop:Budget of uncertainty set} and for the  generalized budget of uncertainty set \eqref{set:polytope}, we use the dominating set ${\hat{\cal U}}$ in Proposition \ref{prop:poly} (see Appendix \ref{apx-prop:poly}).}

\red{
For non-permutation invariant sets, we use Algorithm \ref{algo2} to compute the dominating simplex. In particular, we get $\beta$ and $\mb v$ that satisfies \eqref{ineq:beta-v}
and  $2 \beta \cdot {\sf conv } \left( \mb e_1,\ldots,\mb e_m, \mb v \right)$ is a dominating set (see Lemma \ref{lem:ineq}-b). We can also show that the following set \eqref{newdom} is a dominating set (see Proposition \ref{prop:inters} in Appendix \ref{apx-:prop:inters}),
\begin{equation} \label{newdom}
{\hat{\cal U}} = \beta \cdot {\sf conv} \left( \mb v, \mb e_1 + \mb v, \ldots, \mb e_m + \mb v \right).
\end{equation}
While the worst case scaling factor for the above dominating set can be $2 \beta$ and therefore the theoretical bounds do not change, computationally \eqref{newdom} can provide a better policy and we use this in our numerical experiments for the intersection of budget of uncertainty sets  \eqref{set:inters:budget}.
}

\red{
\subsection{Results}
Let $z_{\sf p-aff}({\cal U})$ denote the worst-case objective value of our piecewise affine police. Note that  the piecewise affine policy over ${\cal U}$ is computed by solving the adjustable robust problem over $\hat{\cal U}$ and $z_{\sf p-aff}({\cal U}) = z_{\sf AR}(\hat{\cal U})$. For each uncertainty set we report the ratio $r =  \frac{z_{\sf{Aff}} ({\cal U})    }{z_{\sf p-aff}  ({\cal U}) }$ for $m=10$ to $100$. In particular, for each value of $m$, we report the average ratio (${\sf Avg}$), the maximum ratio  (${\sf Max}$), the minimum ratio (${\sf Min}$), the quantiles $5\%, 10\% , 25\%, 50\% $ for the ratio $r$, the running time of our policy ($T_{\sf p-aff}(s)$) and the running time of affine policy ($T_{\sf aff}(s)$). In addition, for the intersection of budget of uncertainty sets, we also report the computation time to construct $\hat{\cal U}$ via Algorithm \ref{algo2} ($T_{\sf Alg1}(s)$).  The numerical results are obtained using Gurobi 7.0.2 on a 16-core server with 2.93GHz processor and 56GB RAM.}

%we compare the objective value of the optimal adjustable solution over $\hat{\cal U}$, $z_{\sf{AR}}  (\hat{\cal U})$ with the objective value of the optimal affine solution over ${\cal U}$, $z_{\sf{Aff}} ({\cal U})$.
%\begin{figure}
  %\centering
  % \includegraphics[scale=0.25]{fig_ratio_3}
 %\caption{ The ratio  $r= \frac{z_{\sf Aff}({\cal U})}{z_{\sf p-aff}({\cal U})}$ for different values of $m$ and different uncertainty sets: hypersphere, $3$-norm ball and $3/2$-norm ball.}
  %\label{fig:ratio}
%\end{figure}
%\begin{figure}[h]
 % \centering
  % \includegraphics[scale=0.25]{fig_time_3}
 % \caption{Running time of affine policy in seconds for different values of $m$ and different uncertainty sets: hypersphere, $3$-norm ball and $3/2$-norm ball.}
 %  \label{fig:time}
%\end{figure}

 %generate standard Gaussian, take its absolute value and normalize it by $\sqrt{m}$ so that the columns of the matrix $\mb B$ has the same order of vectors in the surface of  ${\cal U}$.

 \vspace{2mm}
 \noindent
 \red{
{\bf Hypersphere and Norm-balls.}
We present the results of our computational experiments in Tables \ref{tab:simu:sphere}, \ref{tab:simu:3ball} and \ref{tab:simu:small:ball} for the hypersphere and norm-ball uncertainty sets. We observe that the piecewise affine policy performs significantly better than affine policy for our family of test instances. In Tables \ref{tab:simu:sphere}, \ref{tab:simu:3ball} and \ref{tab:simu:small:ball}, we observe that the ratio  $r= \frac{z_{\sf{Aff}} ({\cal U})    }{z_{\sf{p-aff}}  ({\cal U})         }$  increases significantly as $m$ increases which implies that our policy provides a significant improvement over affine policy for large values of $m$. We also observe that the ratio for the hypersphere is larger than the ratio for norm-balls. This matches the theoretical bounds presented in Table \ref{tab:results} which suggests that the improvement over affine policy is the highest for $p=2$ for p-norm balls. }

\red{
We note that for the smallest values of $m$ ($m =10$), the performance of affine policy is better than our policy.  However, for $m > 10$, the performance of our policy is significantly better for all these three uncertainty sets: hypersphere, 3-norm ball and 3/2-norm ball.}

\red{
Furthermore, our policy scales  well and the average running time is less than $0.1$ second even for large values of $m$. On the other hand, computing the optimal affine policy over ${\cal U}$ becomes computationally challenging as $m$ increases. For instance, the average running time for computing an optimal affine policy for $m=100$ is around $9$ minutes for the hypersphere uncertainty set, around $17$ minutes for the $3$-norm ball and around $16$ minutes for the $3/2$-norm ball. }
%Whereas, our policy can be computed in less than $0.1$ second for all instances and even large values of $m$.
%Figure \ref{fig:time} gives the running time of affine policy for different values of $m$ and as we can observe this running time becomes very high for large values of $m$.

\vspace{2mm}
\noindent
\red{ {\bf Budget of uncertainty sets.}
We present the results of our computational experiments in Tables \ref{tab:simu:budget}, \ref{tab:simu:2bud}, \ref{tab:simu:5bud} and \ref{tab:simu:genpoly} for the single budget of uncertainty set, the intersection of budget sets and the generalized budget.}

\red{
For the budget of uncertainty set \eqref{set:budget of uncertainty}, we observe that affine policy performs better than our piecewise affine policy for our family of test instances. Note that as we mention earlier, our policy is not a generalization of affine policies and therefore is not always better. For our experiments, we use $k=c \sqrt{m}$ which gives the worst case theoretical bound for our policy (see Theorem \ref{thm:worstcase}), but the performance of our policy is still reasonable and the average ratio  $r= \frac{z_{\sf{Aff}} ({\cal U})    }{z_{\sf{p-aff}}  ({\cal U})         }$ over all instances is  around 0.88 as we can observe in Table \ref{tab:simu:budget}.  On the other hand, as in the case of conic uncertainty sets, our policy scales well with an average running time less than $0.1$ second even for large values of $m$, whereas affine policy takes for example more than 6 minutes on average for $m=100$. }

\red{
Tables \ref{tab:simu:2bud} and \ref{tab:simu:5bud} present the results for intersection of budget of uncertainty sets. We observe that affine policy outperforms our policy as in the case of a single budget. This confirms that affine policy performs very well empirically for this class of uncertainty sets. We also observe that the performance of our policy improves when we increase the number of budget constraints. For example, for $m=100$, the average ratio $r= \frac{z_{\sf{Aff}} ({\cal U})    }{z_{\sf{p-aff}}  ({\cal U})         }$ increases  from $0.79$ in the case of $L=2$ to $0.88$ for $L=5$. This suggests that the performance of our policy  gets closer to the one of affine policy as long as we add more budgets constraints. While affine policy performs better than our policy for budget of uncertainty sets, we would like to note that this is not necessarily true for any polyhedral uncertainty set. In particular, we also test our policy with the generalized budget \eqref{set:polytope} and observe that our policy is significantly better than affine even when the set is polyhedral.}

\red{
Table \ref{tab:simu:genpoly} presents the results for the generalized budget set \eqref{set:polytope}. We observe that our piecewise affine policy outperforms affine policy both in terms of objective value and computation time. The gap increases as $m$ increases which implies a significant improvement over affine policy for large values of $m$. Furthermore, unlike the piecewise affine policy, computing an affine solution becomes challenging for large values of $m$.}

\red{
For the intersection of budget of uncertainty sets \eqref{set:inters:budget} that are not permutation invariant, we compute the dominating set (in particular $\beta$ and $\mb v$) using Algorithm \ref{algo2}. We report the average running time, $T_{\sf Alg1}$ of Algorithm \ref{algo2} which solves a sequence of MIPs in Tables \ref{tab:simu:2bud} and \ref{tab:simu:5bud}. %Tables \ref{tab:simu:2bud} and \ref{tab:simu:5bud} show the average running time for computing $\beta$ and $\mb v$ using Algorithm \ref{algo2}.
We note that there is no need to solve MIPs optimally in Algorithm \ref{algo2}; one can stop when a feasible solution with  an objective value greater than $t$ is found. We observe  that the running time of Algorithm \ref{algo2} is reasonable as compared to that of affine policy. For example, the average running time of  Algorithm \ref{algo2} for $m=100$ and $L=5$ is $7$ min whereas affine policy takes  $10$ min in average. For large values of $m$ and a large number of budget constraints, the running time of Algorithm 1 might increase significantly and  exceed the computation time of affine policy. However, we would like to emphasize that  $\beta $ and $\mb v$ given by Algorithm \ref{algo2}  do not depend on the  parameters $(\mb A, \mb B, \mb c, \mb d)$ and only depend on the uncertainty set. Therefore, they can be computed offline and can be used to solve many instances of the problem parameters for the same uncertainty set.}

% \begin{figure}[H]
%  \centering
%  \begin{minipage}{0.4\textwidth}
% \includegraphics[scale=0.3]{fig_ratio_3}
%    \caption{ The ratio  $r= \frac{z_{\sf{Aff}} ({\cal U})    }{z_{\sf{AR}}  (\hat{\cal U})         }$ for different values of $m$.}
%    \label{fig:ratio}
%  \end{minipage}
%  \hfill
%  \begin{minipage}{0.4\textwidth}
% \includegraphics[scale=0.3]{fig_time_3}
%    \caption{Running time of affine policy in seconds for different values of $m$.}
%    \label{fig:time}
%  \end{minipage}
%\end{figure}

\section{Conclusion}
This paper introduces a new framework for designing piecewise affine policies (PAP)  for two-stage adjustable robust optimization with right-hand side uncertainty. The framework is based on approximating the uncertainty set ${\cal U}$ by a dominating simplex and constructing a PAP using the map from ${\cal U}$ to the dominating simplex. For the class of conic uncertainty sets including ellipsoids and norm-balls, our PAP performs significantly better, \emph{theoretically and computationally} than affine policy.
For \emph{general} uncertainty sets (particularly a ``budgeted" ${\cal U}$ or intersection of a small number of ``budget of uncertainty sets"), our PAP does not necessarily outperform affine policies, but while the latter may fail for large dimensional ${\cal U}$, the PAP scales well given the dominating set. It is an interesting open question whether a PAP can be designed that significantly improves over affine policy for budgeted uncertainty sets.

%and is computed much faster than affine policy for many interesting families of uncertainty sets. We present the performance bounds of our policy for a large class of interesting uncertainty sets that are permutation invariant including norm-balls, intersection of norm-balls and budget of uncertainty sets. Moreover, we give an algorithm that computes the approximation bounds of our policy for general uncertainty sets.

%We also prove that the worst case approximation bound of our policy is $ O(\sqrt{m})$ which holds for an instance of a budget of uncertainty set. Finally, we would like to note however, that our piecewise affine policy is not necessarily an optimal one. It improves over affine policy for many interesting families of uncertainty sets but there exists examples where affine policy performs better than our policy. It still an open question to characterize the instances of \eqref{eqn:ar} where affine policy is doing better than our piecewise affine policy.

 \begin{table}[]
 \red{
\centering
%\resizebox{\columnwidth}{!}{%
\begin{tabular}{|l|l|l|l|l|l|l|l|c|c|}
\hline
$m$ & ${\sf Avg}$ &  ${\sf Max}$& ${\sf Min}$ &   $5\%$ & $10\%$ &  $25\%$  & $50\%$  & $T_{\sf p-aff}(s)$ & $T_{\sf aff}(s)$\\ \hline
10  & 0.955 & 1.006 & 0.875 & 1.003 & 0.988 & 0.971 & 0.960 & 0.001 & 0.221   \\\hline
20  & 1.120 & 1.168 & 1.076 & 1.152 & 1.141 & 1.132 & 1.122 & 0.002 & 0.948   \\\hline
30  & 1.218 & 1.251 & 1.180 & 1.243 & 1.238 & 1.225 & 1.221 & 0.003 & 2.753   \\\hline
40  & 1.288 & 1.328 & 1.238 & 1.318 & 1.312 & 1.299 & 1.291 & 0.006 & 6.479   \\\hline
50  & 1.349 & 1.382 & 1.319 & 1.375 & 1.370 & 1.357 & 1.349 & 0.009 & 14.678  \\\hline
60  & 1.399 & 1.429 & 1.366 & 1.418 & 1.415 & 1.408 & 1.398 & 0.013 & 32.323  \\\hline
70  & 1.443 & 1.472 & 1.454 & 1.460 & 1.457 & 1.451 & 1.440 & 0.019 & 58.605  \\\hline
80  & 1.485 & 1.509 & 1.485 & 1.505 & 1.499 & 1.491 & 1.482 & 0.033 & 107.898 \\\hline
90  & 1.523 & 1.549 & 1.527 & 1.539 & 1.532 & 1.530 & 1.525 & 0.040 & 200.134 \\\hline
100 & 1.557 & 1.578 & 1.560 & 1.574 & 1.570 & 1.564 & 1.557 & 0.081 & 564.772\\\hline
\end{tabular}%
%}
\caption{Comparison on the performance and computation time of affine policy and our piecewise affine policy for the {\bf hypersphere uncertainty set}. For 50 instances, we compute $ \frac{z_{\sf{Aff}} ({\cal U})    }{z_{\sf{p-aff}}  ({\cal U})         }$ and present the average, $\min$, $\max$ ratios and the percentiles $5\%, 10\%, 25\%, 50\%$. Here, $T_{\sf p-aff}(s)$ denotes the running time for our piecewise affine policy and $T_{\sf aff}(s)$ denotes the running time for affine policy in seconds.
}
\label{tab:simu:sphere}
\bigskip
%\end{table}
%\begin{table}[h]
%\centering
%\resizebox{\columnwidth}{!}{%
\begin{tabular}{|l|l|l|l|l|l|l|l|c|c|}
\hline
$m$ & ${\sf Avg}$ &  ${\sf Max}$& ${\sf Min}$ &   $5\%$ & $10\%$ &  $25\%$  & $50\%$  & $T_{\sf p-aff}(s)$ & $T_{\sf aff}(s)$\\ \hline
10  & 0.975 & 1.049 & 0.907 & 1.023 & 1.017 & 0.991 & 0.971 & 0.001 & 0.743    \\\hline
20  & 1.082 & 1.141 & 1.042 & 1.128 & 1.119 & 1.097 & 1.080 & 0.002 & 3.714    \\\hline
30  & 1.157 & 1.195 & 1.094 & 1.190 & 1.177 & 1.167 & 1.158 & 0.003 & 12.386   \\\hline
40  & 1.218 & 1.247 & 1.184 & 1.236 & 1.233 & 1.226 & 1.219 & 0.006 & 31.687   \\\hline
50  & 1.270 & 1.294 & 1.245 & 1.293 & 1.284 & 1.275 & 1.271 & 0.009 & 69.302   \\\hline
60  & 1.312 & 1.346 & 1.274 & 1.335 & 1.325 & 1.319 & 1.312 & 0.013 & 117.949  \\\hline
70  & 1.345 & 1.363 & 1.323 & 1.361 & 1.358 & 1.351 & 1.347 & 0.020 & 258.862  \\\hline
80  & 1.378 & 1.402 & 1.356 & 1.396 & 1.393 & 1.384 & 1.378 & 0.031 & 435.629  \\\hline
90  & 1.408 & 1.429 & 1.389 & 1.421 & 1.418 & 1.413 & 1.409 & 0.043 & 728.436  \\\hline
100 & 1.434 & 1.457 & 1.419 & 1.447 & 1.443 & 1.438 & 1.433 & 0.050 & 1033.174 \\\hline
\end{tabular}%
%}
\caption{Comparison on the performance and computation time of affine policy and our piecewise affine policy for the {\bf $3$-norm ball uncertainty set}.}
\label{tab:simu:3ball}
%\end{table}
%\begin{table}[]
%\centering
\bigskip
%\resizebox{\columnwidth}{!}{%
\begin{tabular}{|l|l|l|l|l|l|l|l|c|c|}
\hline
$m$ & ${\sf Avg}$ &  ${\sf Max}$& ${\sf Min}$ &   $5\%$ & $10\%$ &  $25\%$  & $50\%$  & $T_{\sf p-aff}(s)$ & $T_{\sf aff}(s)$\\ \hline
10  & 0.904 & 0.952 & 0.817 & 0.939 & 0.932 & 0.918 & 0.905 & 0.001 & 0.728   \\\hline
20  & 1.028 & 1.058 & 0.992 & 1.051 & 1.044 & 1.036 & 1.031 & 0.002 & 3.462   \\\hline
30  & 1.115 & 1.144 & 1.095 & 1.132 & 1.128 & 1.122 & 1.115 & 0.003 & 10.896  \\\hline
40  & 1.174 & 1.190 & 1.161 & 1.184 & 1.183 & 1.177 & 1.174 & 0.005 & 29.209  \\\hline
50  & 1.226 & 1.244 & 1.204 & 1.240 & 1.235 & 1.232 & 1.227 & 0.009 & 70.099  \\\hline
60  & 1.266 & 1.278 & 1.255 & 1.275 & 1.274 & 1.269 & 1.267 & 0.013 & 123.518 \\\hline
70  & 1.303 & 1.311 & 1.292 & 1.310 & 1.309 & 1.305 & 1.303 & 0.019 & 267.450 \\\hline
80  & 1.335 & 1.345 & 1.328 & 1.341 & 1.339 & 1.337 & 1.335 & 0.034 & 458.791 \\\hline
90  & 1.363 & 1.372 & 1.353 & 1.370 & 1.369 & 1.366 & 1.363 & 0.044 & 701.262 \\\hline
100 & 1.387 & 1.395 & 1.381 & 1.392 & 1.391 & 1.389 & 1.387 & 0.056 & 967.773 \\\hline
\end{tabular}%
%}
\caption{Comparison on the performance and computation time of affine policy and our piecewise affine policy for the {\bf $3/2$-norm ball uncertainty set.}}
\label{tab:simu:small:ball}}
\end{table}

%%%%%%%%%%%%%%%%%%%%%%%%%%NEW EXPERIMENTS%%%%%%%%%%%%%%%%%%%%%%%%%%%%%%

\begin{table}[]
\red{
\centering
%\resizebox{\columnwidth}{!}{%
\begin{tabular}{|l|l|l|l|l|l|l|l|c|c|}
\hline
$m$ & ${\sf Avg}$ &  ${\sf Max}$& ${\sf Min}$ &   $5\%$ & $10\%$ &  $25\%$  & $50\%$  & $T_{\sf p-aff}(s)$ & $T_{\sf aff}(s)$\\ \hline
10  & 0.906 & 0.989 & 0.766 & 0.986 & 0.974 & 0.957 & 0.915 & 0.001 & 0.014   \\\hline
20  & 0.897 & 0.963 & 0.780 & 0.957 & 0.951 & 0.939 & 0.916 & 0.002 & 0.207   \\\hline
30  & 0.891 & 0.961 & 0.765 & 0.957 & 0.945 & 0.923 & 0.906 & 0.004 & 0.803   \\\hline
40  & 0.882 & 0.954 & 0.753 & 0.950 & 0.946 & 0.928 & 0.900 & 0.006 & 2.997   \\\hline
50  & 0.899 & 0.954 & 0.763 & 0.950 & 0.947 & 0.937 & 0.914 & 0.011 & 11.687  \\\hline
60  & 0.879 & 0.956 & 0.772 & 0.953 & 0.948 & 0.932 & 0.896 & 0.015 & 26.760  \\\hline
70  & 0.887 & 0.958 & 0.911 & 0.951 & 0.950 & 0.936 & 0.909 & 0.020 & 71.167  \\\hline
80  & 0.882 & 0.954 & 0.768 & 0.951 & 0.946 & 0.937 & 0.902 & 0.047 & 147.376 \\\hline
90  & 0.890 & 0.953 & 0.765 & 0.950 & 0.949 & 0.936 & 0.917 & 0.039 & 220.809 \\\hline
100 & 0.886 & 0.955 & 0.750 & 0.946 & 0.943 & 0.931 & 0.900 & 0.066 & 397.981\\\hline
\end{tabular}%
%}
\caption{Comparison on the performance and computation time of affine policy and our piecewise affine policy for the {\bf budget of uncertainty set }with a budget $k=c\sqrt{m}$ where for each instance we generate $c$ uniformly from $[1,2]$.}
\label{tab:simu:budget}
%\end{table}
%\begin{table}[]
%\centering
\bigskip
%\resizebox{\columnwidth}{!}{%
\begin{tabular}{|l|l|l|l|l|l|l|l|l|l|l|}
\hline
$m$ & ${\sf Avg}$ &  ${\sf Max}$& ${\sf Min}$ &   $5\%$ & $10\%$ &  $25\%$  & $50\%$  & $T_{\sf p-aff}(s)$ & $T_{\sf Alg1}(s)$ & $T_{\sf aff}(s)$\\ \hline
10  & 0.814 & 0.881 & 0.700 & 0.861 & 0.851 & 0.833 & 0.821 & 0.002 & 0.191 & 0.013   \\\hline
20  & 0.805 & 0.866 & 0.716 & 0.850 & 0.838 & 0.825 & 0.807 & 0.016 & 0.723 & 0.227   \\\hline
30  & 0.770 & 0.847 & 0.701 & 0.827 & 0.808 & 0.787 & 0.773 & 0.091 & 0.386 & 0.931   \\\hline
40  & 0.801 & 0.839 & 0.702 & 0.832 & 0.828 & 0.814 & 0.810 & 0.270 & 1.399 & 3.731   \\\hline
50  & 0.781 & 0.825 & 0.726 & 0.818 & 0.814 & 0.803 & 0.784 & 0.656 & 2.081 & 12.056  \\\hline
60  & 0.805 & 0.841 & 0.752 & 0.829 & 0.824 & 0.817 & 0.811 & 1.406 & 4.093 & 32.695  \\\hline
70  & 0.789 & 0.839 & 0.706 & 0.820 & 0.809 & 0.802 & 0.795 & 2.595 & 1.798 & 80.342  \\\hline
80  & 0.774 & 0.844 & 0.725 & 0.825 & 0.816 & 0.789 & 0.770 & 4.484 & 5.096 & 163.257 \\\hline
90  & 0.807 & 0.838 & 0.756 & 0.832 & 0.828 & 0.818 & 0.807 & 7.628 & 8.734 & 354.598 \\\hline
100 & 0.790 & 0.821 & 0.750 & 0.817 & 0.812 & 0.801 & 0.791 & 5.235 & 6.391 & 646.136\\\hline
\end{tabular}%
%}
\caption{Comparison on the performance and computation time of affine policy and our piecewise affine policy for the {\bf intersection of 2 budget of uncertainty sets} \eqref{set:inters:budget}.}
\label{tab:simu:2bud}
\bigskip
%\resizebox{\columnwidth}{!}{%
\begin{tabular}{|l|l|l|l|l|l|l|l|l|l|l|}
\hline
$m$ & ${\sf Avg}$ &  ${\sf Max}$& ${\sf Min}$ &   $5\%$ & $10\%$ &  $25\%$  & $50\%$  & $T_{\sf p-aff}(s)$ & $T_{\sf Alg1}(s)$ & $T_{\sf aff}(s)$\\ \hline
10  & 0.869 & 0.932 & 0.824 & 0.920 & 0.910 & 0.884 & 0.871 & 0.002 & 0.043   & 0.015   \\\hline
20  & 0.852 & 0.924 & 0.795 & 0.909 & 0.893 & 0.870 & 0.852 & 0.021 & 0.058   & 0.309   \\\hline
30  & 0.864 & 0.898 & 0.820 & 0.888 & 0.880 & 0.872 & 0.865 & 0.100 & 0.343   & 1.024   \\\hline
40  & 0.856 & 0.896 & 0.802 & 0.883 & 0.882 & 0.874 & 0.861 & 0.290 & 0.464   & 4.010   \\\hline
50  & 0.857 & 0.891 & 0.794 & 0.891 & 0.886 & 0.876 & 0.861 & 0.706 & 3.546   & 12.535  \\\hline
60  & 0.880 & 0.900 & 0.860 & 0.894 & 0.892 & 0.885 & 0.881 & 1.471 & 18.474  & 33.693  \\\hline
70  & 0.873 & 0.896 & 0.809 & 0.894 & 0.890 & 0.882 & 0.878 & 2.800 & 13.125  & 82.961  \\\hline
80  & 0.858 & 0.889 & 0.825 & 0.886 & 0.881 & 0.872 & 0.858 & 4.809 & 21.780  & 167.753 \\\hline
90  & 0.859 & 0.890 & 0.818 & 0.885 & 0.881 & 0.877 & 0.866 & 8.004 & 144.808 & 344.924 \\\hline
100 & 0.885 & 0.902 & 0.865 & 0.900 & 0.896 & 0.893 & 0.888 & 5.821 & 459.436 & 632.483 \\\hline
\end{tabular}%
%}
\caption{Comparison on the performance and computation time of affine policy and our piecewise affine policy for the {\bf intersection of 5 budget of uncertainty sets}  \eqref{set:inters:budget}.}
\label{tab:simu:5bud}}
\end{table}`

\begin{table}[]
\red{
\centering
%\resizebox{\columnwidth}{!}{%
\begin{tabular}{|l|l|l|l|l|l|l|l|c|c|}
\hline
$m$ & ${\sf Avg}$ &  ${\sf Max}$& ${\sf Min}$ &   $5\%$ & $10\%$ &  $25\%$  & $50\%$  & $T_{\sf p-aff}(s)$ & $T_{\sf aff}(s)$\\ \hline
10  & 1.015 & 1.067 & 0.983 & 1.053 & 1.045 & 1.025 & 1.006 & 0.001 & 0.046    \\\hline
20  & 1.107 & 1.159 & 1.100 & 1.147 & 1.142 & 1.127 & 1.106 & 0.003 & 0.840    \\\hline
30  & 1.148 & 1.214 & 1.092 & 1.189 & 1.179 & 1.163 & 1.155 & 0.004 & 3.933    \\\hline
40  & 1.173 & 1.220 & 1.105 & 1.206 & 1.198 & 1.188 & 1.175 & 0.009 & 18.097   \\\hline
50  & 1.191 & 1.227 & 1.154 & 1.216 & 1.213 & 1.201 & 1.189 & 0.016 & 62.668   \\\hline
60  & 1.209 & 1.259 & 1.193 & 1.238 & 1.225 & 1.215 & 1.210 & 0.021 & 145.552  \\\hline
70  & 1.225 & 1.254 & 1.190 & 1.247 & 1.239 & 1.228 & 1.224 & 0.019 & 237.448  \\\hline
80  & 1.237 & 1.275 & 1.213 & 1.264 & 1.260 & 1.245 & 1.235 & 0.044 & 573.342  \\\hline
90  & 1.248 & 1.284 & 1.223 & 1.268 & 1.260 & 1.254 & 1.249 & 0.050 & 1168.928 \\\hline
100 & 1.257 & 1.274 & 1.240 & 1.271 & 1.268 & 1.261 & 1.257 & 0.053 & 1817.940\\\hline
\end{tabular}%
%}
\caption{Comparison on the performance and computation time of affine policy and our piecewise affine policy for the {\bf generalized budget of uncertainty set \eqref{set:polytope}.}}
\label{tab:simu:genpoly}}
\end{table}

\newpage
\bibliographystyle{abbrv}
\bibliography{robust.bib}

%\bibliography{bab}   % name your BibTeX data base
% Non-BibTeX users please use
%\begin{thebibliography}{}
%
% and use \bibitem to create references. Consult the Instructions
% for authors for reference list style.
%
%\bibitem{RefJ}
% Format for Journal Reference
%Author, Article title, Journal, Volume, page numbers (year)
% Format for books
%\bibitem{RefB}
%Author, Book title, page numbers. Publisher, place (year)
% etc
%\end{thebibliography}

\appendix
\begin{appendix}
\section{Proof of Theorem \ref{thm:ineq}}\label{apx-proofs:thm:ineq}

\begin{proof}
Let $(\hat{\mb{x}},\mb{\hat{y}}(\hat{\mb{h}}), \hat{\mb{h}}\in\hat{\cal U})$ be an optimal solution for $z_{\sf AR}(\hat{\cal U})$. For each $\mb{h}\in{\cal U}$, let $\tilde{\mb{y}}(\mb{h})= \hat{\mb{y}}(\hat{\mb{h}})$ where $\hat{\mb{h}}\in\hat{\cal U}$ dominates $\mb{h}$. Therefore, for any $\mb{h}\in{\cal U}$,
\[\mb{A}\hat{\mb{x}}+\mb{B}\tilde{\mb{y}}(\mb{h}) = \mb{A}\hat{\mb{x}}+\mb{B}\hat{\mb{y}}(\hat{\mb{h}})\geq\hat{\mb{h}}\geq\mb{h},\]
i.e.,  $(\hat{\mb{x}},\tilde{\mb{y}}(\mb{h}),\mb{h}\in{\cal U})$ is a feasible solution for $z_{\sf{AR}}({\cal U})$. Therefore,
\[z_{\sf AR}({\cal U})\leq\mb{c}^T\hat{\mb{x}}+\max_{\mb{h}\in{\cal U}}\mb{d}^T\tilde{\mb{y}}(\mb{h})\leq\mb{c}^T\hat{\mb{x}}+\max_{\hat{\mb{h}}\in\hat{\cal U}}\mb{d}^T\hat{\mb{y}}(\hat{\mb{h}})=z_{\sf{AR}}(\hat{\cal U}).\]
Conversely, let $(\mb{x}^*, \mb{y}^*(\mb{h}), \mb{h}\in{\cal U})$ be an optimal solution of $z_{\sf AR}({\cal U})$. Then, for any $\hat{\mb{h}}\in\hat{\cal U}$, since $ \frac{\hat{\mb{h}}}{\beta} \in{\cal U}$, we have,
\[\mb{A}\mb{x}^*+\mb{B}\mb{y}^*\left(\frac{\hat{\mb{h}}}{\beta}\right)\geq\frac{\hat{\mb{h}}}{\beta},\]
Therefore, $(\beta\mb{x}^*, \beta\mb{y}^*\left(\frac{\hat{\mb{h}}}{\beta}\right), \hat{\mb{h}} \in {\cal U})$ is feasible for $\Pi_{\sf AR}(\hat{\cal U})$. Therefore,
\[z_{\sf AR}(\hat{\cal U})\leq  \mb c^T  \beta  \mb x^* + \underset{ \mb{\hat{h}} \in \hat{\cal U}} \max \; \mb d^T   \beta  \mb{y^*}\left(\frac{\mb{\hat{h}}}{\beta}\right) \leq \beta \cdot \left( \mb c^T   \mb x^* + \underset{ \mb{h} \in {\cal U}} \max \; \mb d^T    \mb{y^*(h)} \right) = \beta \cdot z_{\sf{AR}}(\cal U).\]
 \end{proof}

\section{Proof of Lemma \ref{lem:ineq}}\label{apx-proofs:lem:ineq}
 \begin{proof}
a)
Suppose there exists $ \beta $ and $ \mb v \in {\cal U}$ such that
$ \;  \hat{\cal U} = \beta \cdot {\sf conv } \left( \mb e_1,\ldots,\mb e_m, \mb v \right)$ dominates $\cal U $.
Consider $\mb h \in {\cal U }$. Since  $\hat{\cal U}$ dominates  ${\cal U}$, there exists $ \alpha_1, \alpha_2,\ldots,\alpha_{m+1} \geq 0$ with $\alpha_1 + \ldots + \alpha_{m+1} =1$ such that \begin{equation} \label{ax1}
 h_i \leq \beta \left( \alpha_i + \alpha_{m+1}  v_i  \right), \; \forall i =1,\ldots,m.
\end{equation}
Let
$$ I (\mb h )= \left\{ i \in [m] \; \bigg\vert \;  h_i - \beta v_i \geq 0 \right\}.$$ Then,
 \begin{align*}
\sum_{i=1}^m \left( h_i -\beta v_i \right)^{+} &= \underset {i \in I (\mb h ) } \sum  h_i - \beta \underset {i \in I (\mb h ) } \sum v_i \\
& \leq  \sum_{i\in I (\mb h )}  \beta \left( \alpha_i + \alpha_{m+1}  v_i  \right)  - \beta \underset {i \in I (\mb h )} \sum v_i\\
& = \beta \sum_{i\in I (\mb h )}   \alpha_i +  \left( \alpha_{m+1} -1\right) \beta \underset {i \in I (\mb h ) } \sum  v_i\\
& \leq \beta, \\
\end{align*}
where the first inequality follows from \eqref{ax1} and the last inequality holds because $ \alpha_{m+1} -1 \leq 0$, $ v_i \geq 0$ , $\beta \geq 0$ and $ \sum_{i\in I (\mb h )}   \alpha_i \leq 1$.
We conclude that
  $$  \frac{1}{\beta} \sum_{i=1}^m \left( h_i -\beta v_i \right)^{+} \leq 1.$$

\red{b) Now, suppose there exists $ \beta $ and $ \mb v \in {\cal U}$ such that
$ \;  \hat{\cal U} = \beta \cdot {\sf conv } \left( \mb e_1,\ldots,\mb e_m, \mb v \right)$ dominates $\cal U $.
For any $\mb h \in {\cal U}$, let
\[ \mb{\hat h} =  \sum_{i=1}^m \left( h_i -\beta v_i  \right)^{+} \mb e_i + \beta \mb v.\]
Then for all $i=1,\ldots,m$,
\begin{align*}
\hat{h}_i  &=  \left( h_i -\beta v_i  \right)^{+} + \beta  v_i \\
&\geq \left( h_i -\beta v_i  \right) + \beta  v_i \geq h_i .
\end{align*}
Therefore, $\mb{\hat h}$ dominates $\mb h$. Moreover,
\[ \mb{\hat h} = 2\beta \left( \sum_{i=1}^m \frac{\left( h_i -\beta v_i  \right)^{+}}{2 \beta} \mb e_i + \frac{1}{2}\mb v \right) \in 2\beta \cdot {\sf conv } \left( \mb 0, \mb e_1,\ldots,\mb e_m, \mb v \right), \]
because
$$  \frac{1}{\beta} \sum_{i=1}^m \left( h_i - \beta v_i  \right)^{+}  \leq 1. $$
Therefore, $2\beta \cdot {\sf conv } \left(\mb 0,  \mb e_1,\ldots,\mb e_m, \mb v \right)  $ dominates ${\cal U} $ and consequently \\$2\beta \cdot {\sf conv } \left( \mb e_1,\ldots,\mb e_m, \mb v \right)  $ dominates ${\cal U} $ as well.}
     \end{proof}

\iffalse%%%%%%%%%%%%%%%%%%
\section{Proof of Corollary \ref{cor:ineq}}\label{apx-proofs:cor:ineq}
\begin{proof}
The first part of the corollary follows directly from the proof of Theorem \ref{thm:cost}. Let us prove that $2\beta \cdot {\sf conv } \left( \mb e_1,\ldots,\mb e_m, \mb v \right)  $ dominates ${\cal U} $.
\red{
For any $\mb h \in {\cal U}$, let
\[ \mb{\hat h} =  \sum_{i=1}^m \left( h_i -\beta v_i  \right)^{+} \mb e_i + \beta \mb v.\]
Then for all $i=1,\ldots,m$,
\begin{align*}
\hat{h}_i  &=  \left( h_i -\beta v_i  \right)^{+} + \beta  v_i \\
&\geq \left( h_i -\beta v_i  \right) + \beta  v_i \geq h_i .
\end{align*}
Therefore, $\mb{\hat h}$ dominates $\mb h$. Moreover,
\[ \mb{\hat h} = 2\beta \left( \sum_{i=1}^m \frac{\left( h_i -\beta v_i  \right)^{+}}{2 \beta} \mb e_i + \frac{1}{2}\mb v \right) \in 2\beta \cdot {\sf conv } \left( \mb 0, \mb e_1,\ldots,\mb e_m, \mb v \right), \]
because
$$  \frac{1}{\beta} \sum_{i=1}^m \left( h_i - \beta v_i  \right)^{+}  \leq 1. $$
Therefore, $2\beta \cdot {\sf conv } \left(\mb 0,  \mb e_1,\ldots,\mb e_m, \mb v \right)  $ dominates ${\cal U} $ and consequently \\$2\beta \cdot {\sf conv } \left( \mb e_1,\ldots,\mb e_m, \mb v \right)  $ dominates ${\cal U} $ as well.}
 \end{proof}
\fi%%%%%%%%%%%%%%%%%

\section{Proof of Lemma \ref{lem:gamma(k)}}\label{apx-proofs:em:gamma(k)}

\begin{proof}
Suppose $ k \in [m]$. Let us consider
$$ \mb h  \in   \underset{\mb h \in {\cal U}}{  \sf argmax} \sum_{i=1}^k h_i .$$
Without loss of generality, we can suppose that $h_i =0$ for $ i =k+1,\ldots, m$. Denote, ${\cal S}_k$ the set of permutations of $\{ 1,2,\ldots,k\}$. We define $\mb h ^{\sigma} \in \mathbb{R}_+^m$ such that $h^{\sigma}_i = h_{\sigma(i) }$ for $i=1, \ldots ,k$ and $h^{\sigma}_i =0$ otherwise.
Since ${\cal U}$ is a permutation invariant set, we have $\mb h ^{\sigma} \in  {\cal U}$ for any $\sigma \in {\cal S}_k$. The convexity of ${\cal U}$ implies that
$$ \frac{1}{k!} \sum_{ \sigma \in {\cal S}_k} \mb h^{\sigma} \in {\cal U}.$$
We have,
$$  \sum_{ \sigma \in {\cal S}_k}  h^{\sigma}_i = \left\{
    \begin{array}{ll}
        (k-1)! \cdot \sum_{j=1}^k h_j & \mbox{if } i=1,\ldots,k \\
        0 & \mbox{otherwise,}
    \end{array}
\right.$$
and $\sum_{j=1}^k h_j = k \cdot \gamma(k)$ by definition. Therefore, $$ \frac{1}{k!} \sum_{ \sigma \in {\cal S}_k} \mb h^{\sigma} =
 \gamma (k) \cdot \sum_{i=1}^k \mb e_i \in {\cal U}.$$
 \end{proof}

\section{Proof of Lemma \ref{lem:struct:sol}}\label{apx-proofs:lem:struct:sol}

\begin{proof}
Consider, $ \tilde{\mb h} \in {\cal U} $ an optimal solution  for the maximization problem in \eqref{eq:ineq:perm} for fixed $\beta$. We will construct $ \mb h^* \in {\cal U}$ another optimal solution of \eqref{eq:ineq:perm} that verifies the properties in the lemma. First, denote
$ I =   \{ i \; \vert \;  \tilde{h}_i > \beta \gamma\}   $ and  $\vert I\vert =k$. Since, ${\cal U}$ is permutation invariant, we can suppose without loss of generality that $I =\{1,2,\ldots,k \}$. We define,
$$  h^*_i = \left\{
    \begin{array}{ll}
       \gamma(k) & \mbox{if } i=1,\ldots,k \\
        0 & \mbox{otherwise.}
    \end{array}
\right.$$
From Lemma \ref{lem:gamma(k)}, we have $\mb h^* \in {\cal U}$. Moreover,
\begin{align*}
\sum_{i=1}^m ( \tilde{h}_i - \beta \gamma)^+  = \sum_{i=1}^k \tilde{h}_i  - \beta \gamma k  & \leq k  \cdot\gamma(k) - \beta \gamma k \\
& =  \sum_{i=1}^k ( \gamma(k) - \beta \gamma )  =  \sum_{i=1}^k ( h^*_i- \beta \gamma ) \\
& \leq  \sum_{i=1}^k ( h^*_i- \beta \gamma )^+ =  \sum_{i=1}^m ( h^*_i- \beta \gamma )^+\\
\end{align*}
where the first inequality follows from the definition of the coefficients $\gamma (.)$.  Therefore, $\mb{h}^*$ and $ \tilde{\mb h}$ have the same objective value in \eqref{eq:ineq:perm} and consequently $\mb{h}^*$ is also optimal for the maximization problem \eqref{eq:ineq:perm}. Moreover, from the first inequality, we have $ \gamma(k) - \beta \gamma  >0 $, i.e.,  $\big\vert  \{ i \; \vert \;  h_i^* > \beta \gamma\}    \big\vert  =k.$ Therefore, $\mb{h}^*$ verifies the properties of the lemma.
 \end{proof}

\section{Proof of Proposition \ref{prop:ellipsoid}}\label{apx-proofs:prop:ellipsoid}

\begin{proof}
To prove that $\hat{\cal U}$ dominates ${\cal U}$, it is sufficient to take $\mb{h}$ in the boundaries of $ {\cal U}$, i.e.,
\begin{equation} \label{apax}
a \sum_{i=1}^m h_i \sum_{j=1}^m h_j + (1-a) \sum_{i=1}^m h_i^2 =1 ,
\end{equation}
and find $ \alpha_1, \alpha_2,\ldots,\alpha_{m+1}$ nonnegative reals with $\sum_{i=1}^{m+1} \alpha_i =1$ such that for all $ i \in [m],$ $$ \; h_i \leq \beta \left( \alpha_i + \gamma \alpha_{m+1}\right).$$
By taking all $h_i$ equal in \eqref{apax}, we get
$$ \gamma = \frac{1}{ \sqrt{\left( am^2+(1-a)m \right)}  } .   $$
We choose  for $ i \in [m]$, $$ \alpha_i = \frac{1}{2} \left( (1-a)h_i^2+ a h_i \sum_{j=1}^m h_j   \right)     $$ and $\alpha_{m+1}= \frac{1}{2}.$ First, we have $\sum_{i=1}^{m+1} \alpha_i =1$ and for all $ i \in [m]$,
\begin{align*}
 \beta \left( \alpha_i + \gamma \alpha_{m+1}\right) &=  \frac{\beta}{2}  \left( (1-a)h_i^2+ a h_i \sum_{j=1}^m h_j  + \frac{1}{ \sqrt{am^2+(1-a)m}  } \right) \\
 & \geq \frac{\beta}{2}  \left( (1-a)h_i^2   + \frac{1}{ \sqrt{am^2+(1-a)m}  } + a h_i \right) \\
  & \geq \frac{\beta}{2}  \left( 2 \left(  \frac{(1-a)}{\sqrt{am^2+(1-a)m}}      \right)^{\frac{1}{2}}h_i+ a h_i \right) =h_i \\
\end{align*}
where the first inequality holds because $\sum_{j=1}^m h_j \geq 1$ which is a direct consequence of $ \mb h^T \Sigma \mb h =1$ and $ a \leq 1$. The second one follows from \red{the inequality of arithmetic and geometric means (AM-GM inequality).}
Finally, we can verify by case analysis on the values of $a$ that $$  \left(  \frac{a}{2}+ \frac{(1-a)^{\frac{1}{2}}}{\left( am^2+(1-a)m\right)^{\frac{1}{4}}}\right)^{-1}= O \left(  m^{\frac{2}{5}} \right) .$$
In fact, denote $H(m)= \left(  \frac{a}{2}+ \frac{(1-a)^{\frac{1}{2}}}{\left( am^2+(1-a)m\right)^{\frac{1}{4}}}\right)^{-1} = O \left(  a+ \frac{1}{\left( am^2+m\right)^{\frac{1}{4}}}\right)^{-1}$

{\bf Case1: $a=O(\frac{1}{m})$.} We have $\left( am^2+m \right)^{\frac{1}{4}} = O(m^{\frac{1}{4}})$. Then $H(m)=O(m^{\frac{1}{4}})=O(m^{\frac{2}{5}})$.

{\bf Case2: $a=\Omega(m^{\frac{-2}{5}})$.} We have  $H(m)=O(a^{-1})=O(m^{\frac{2}{5}})$.

{\bf Case3: $a=O(m^{\frac{-2}{5}} )$ and $a=\Omega(\frac{1}{m})$.} We have $\left( am^2+m \right)^{\frac{1}{4}} = O(m^{\frac{2}{5}})$. Then, $$ a+ \frac{1}{\left( am^2+m\right)^{\frac{1}{4}}}= \Omega(\frac{1}{m})+\Omega(m^{\frac{-2}{5}})=\Omega(m^{\frac{-2}{5}}).$$ Therefore, $H(m)=O(m^{\frac{2}{5}})$.
 \end{proof}

\section{Proof of Proposition \ref{prop:Budget of uncertainty set}}\label{apx-proofs:prop:Budget of uncertainty set}

\begin{proof}
To prove that $\hat{ \cal U}$ dominates ${ \cal U}$, it is sufficient to take $\mb h$ in the boundaries of $ {\cal U}$, i.e.,  $\sum_{i=1}^m h_i =k$ and find $ \alpha_1, \alpha_2,\ldots,\alpha_{m+1}$ non-negative reals with $\sum_{i=1}^{m+1} \alpha_i =1$ such that for all $ i \in [m],$ $$ \; h_i \leq \beta \left( \alpha_i + \frac{k}{m}\alpha_{m+1}\right).$$
{\em{First case:}} If $\beta=k$, we choose  $\alpha_i = \frac{h_i}{k}$ for $ i \in [m]$ and $\alpha_{m+1}= 0.$ We have $\sum_{i=1}^{m+1} \alpha_i =1$ and for all $ i \in [m]$,
$$ \beta \left( \alpha_i + \frac{k}{m}\alpha_{m+1}\right) =  k \frac{h_i}{k} \geq h_i .$$
{\em{Second case:}} If $\beta=\frac{m}{k}$, we choose  $\alpha_i = 0 $ for $ i \in [m]$ and $\alpha_{m+1}= 1.$ We have $\sum_{i=1}^{m+1} \alpha_i =1$ and for all $ i \in [m]$,
$$ \beta \left( \alpha_i + \frac{k}{m}\alpha_{m+1}\right) =  1 \geq h_i .$$
 \end{proof}
\red{
\section{Proof of Lemma \ref{lem:tight-comp-paff}}\label{apx-proofs:lem:tight-comp-paff}
\begin{proof}
Consider the following simplex
$$ \hat{\cal U} = {\sf{conv}} \left(\mb e_1 , \ldots ,\mb e_m, \frac{1}{\sqrt{m}} \mb e        \right)$$
It is clear that $ \hat{\cal U}$ dominates  ${\cal U}$ since $\frac{1}{\sqrt{m}} \mb e$ dominates all the extreme points $ \mb \nu_j$ for $j \in [N]$.
Moreover, by the convexity of $ \cal U$,  we have $ \frac{1}{N} \sum_{j=1}^N \mb {\nu}_j = \frac{\binom {m-1} {r-1}}{\sqrt{m}\binom m r}   \mb e = \frac{r}{m\sqrt{m}} \mb e \in {\cal U}$. Denote $\beta = \frac{m}{r } $. Hence, for all $i \in [m]$
$$ \mb e_i = \beta  \underbrace{\left( \frac{1}{\beta} \cdot \mb e_i + (1- \frac{1}{\beta}) \cdot \mb 0\right) }_{\in {\cal U}} \qquad \text{and} \qquad \frac{1}{\sqrt{m}} \mb e = \beta \cdot  \underbrace{\frac{r}{m\sqrt{m}} \mb e}_{\in \cal U}. $$
Therefore, $ \hat{\cal U} \subseteq \beta \cdot {\cal U}$ and from Theorem \ref{thm:ineq}, we conclude that our policy gives a $\beta$-approximation to the adjustable problem ~\eqref{eqn:ar} where $\beta = \frac{m}{\lceil m- \sqrt{m \rceil}  }=O (1+ \frac{1}{\sqrt{m}})$.
 \end{proof}
\section{Proof of Lemma \ref{lem:tight-comp-aff}}\label{apx-proofs:lem:tight-comp-aff}
\begin{proof}
First, let us prove that $z_{\sf AR}({\cal U}) \leq 1$. It is sufficient to define an adjustable solution only for the extreme points of ${\cal U}$ because the constraints are linear. We define the following solution for all $i=1,\ldots,m$ and for all $j=1,\ldots,N$
$$  \mb x = \mb 0 , \qquad \mb y ( \mb 0) = \mb 0, \qquad  \mb y ( \mb e_i) = \mb e_i, \qquad  \mb y ( \mb \nu_j) = \frac{1}{m} \mb e.$$
We have $\mb B \mb y ( \mb 0) = \mb 0$. For $i \in [m]$
$$\mb B \mb y ( \mb e_i) = \mb e_i + \frac{1}{\sqrt{m}} ( \mb e - \mb e_i) \geq  \mb e_i$$
and for $j \in [N]$
$$\mb B \mb y (\mb \nu_j) = \frac{1}{m} \mb B \mb e = \left( \frac{1}{m}+ \frac{m-1}{m \sqrt{m}} \right) \mb e \geq \frac{1}{\sqrt{m}} \mb e    \geq \mb \nu_j.$$
Therefore, the solution defined above is feasible. Moreover, the cost of our feasible solution is $1$ because for all $i \in [m]$ and $j \in [N]$,   we have
$$ \mb d^T \mb y ( \mb e_i)= \mb d^T \mb y ( \mb \nu_j)= 1.$$
Hence, $z_{\sf AR}({\cal U}) \leq 1.$
Now, it is sufficient to prove that  $z_{\sf Aff}({\cal U})= \Omega ( \sqrt{m})$. First, $\tilde{\mb x}= \frac{1}{\sqrt{m}} \mb e $ and $\mb y( \mb h)= \mb 0$ for any $\mb h \in {\cal U}$ is a feasible static solution (which is a special case of an affine solution).  In fact, $$\mb A \tilde{\mb x}= \frac{1}{\sqrt{m}} \mb A \mb e = \left( \frac{1}{\sqrt{m}}+ \frac{m-1}{m} \right) \mb e \geq \mb e \geq \mb h      \qquad \forall \mb h \in {\cal U}$$
where the last inequality holds because $ {\cal U} \subseteq [0,1]^m$. Moreover, the cost of this static solution is
$$ \mb c^T \tilde{\mb x} = \frac{\sqrt{m}}{15}.$$
Hence,
\begin{equation} \label{contradic}
z_{\sf Aff}({\cal U}) \leq  \frac{\sqrt{m}}{15}.
\end{equation}
Our instance is "a permuted instance", i.e. ${\cal U}$ is  permutation invariant, $\mb A$ and $\mb B$ are symmetric and $\mb c$ and $\mb d$ are proportional to $\mb e$. Hence,
from Lemma 8 and Lemma 7 in Bertsimas and Goyal \cite{BG10},  for any optimal solution $\mb x^*_{\sf Aff}, \mb y^*_{\sf Aff}( \mb h)$ of the affine problem, we can construct another optimal affine solution that is "symmetric" and have the same  stage cost. In particular, there exists an optimal solution for the affine problem  of the following form $ \mb x= \alpha \mb e$, $ \mb y( \mb h) = \mb P \mb h + \mb q$ for $\mb h \in {\cal U}$ where
\begin{equation} \label{matrix:P}
 \mb P= \left(
\begin{matrix}
\theta &  \mu & \ldots & \mu \\
\mu &  \theta & \ldots & \mu \\
\vdots & \vdots & \ddots & \vdots\\
\mu  &   \mu    &\ldots & \theta
\end{matrix}
\right)
\end{equation}
 $ \mb q = \lambda \mb e$, $\mb c^T \mb x= \mb c^T \mb x^*_{\sf Aff}$  and  $\max_{ \mb h \in {\cal U}} \mb d^T \mb y(\mb h)= \max_{ \mb h \in {\cal U}} \mb d^T \mb y^*_{\sf Aff}(\mb h)$.  We have $ \mb x \geq \mb 0$ and $ \mb y(\mb 0) = \lambda \mb e \geq \mb 0$ hence
\begin{equation}\label{eq:lambda}
\lambda \geq 0 \qquad \text{and}   \qquad \alpha \geq 0.
\end{equation}
\noindent
\underline{\bf Claim:} $\alpha \geq \frac{1}{24 \sqrt{m}}$
For a sake of contradiction, suppose that $ \alpha > \frac{1}{24 \sqrt{m}}$. We know that
\begin{equation}\label{eq:lem:1}
z_{\sf Aff}({\cal U}) \geq \mb c^T \mb x + \mb d^T \mb y( \mb 0) =  \frac{\alpha}{15} m + \lambda m.
\end{equation}
{\bf Case 1:} If $\lambda \geq \frac{1}{12 \sqrt{m}}$, then from \eqref{eq:lem:1} and $\alpha \geq 0$, we have $z_{\sf Aff}({\cal U}) \geq \frac{\sqrt{m}}{12}$. Contradiction with \eqref{contradic}.\\
{\bf Case 2:} If $ \lambda \leq \frac{1}{12 \sqrt{m}}$. We have
$$ \mb y( \mb e_1) = ( \theta+ \lambda ) \mb e_1 + ( \mu+ \lambda) ( \mb e -  \mb e_1).$$
By feasibility of the solution, we have $\mb A \mb x+ \mb B \mb y ( \mb e_1) \geq \mb e_1$, hence
$$ \theta+ \lambda +  \alpha \left( \frac{m-1}{\sqrt{m}} +1 \right)+\frac{1}{\sqrt{m}} (m-1)(\mu+ \lambda ) \geq 1$$
Therefore $\theta+ \lambda +  \alpha \left( \frac{m-1}{\sqrt{m}} +1 \right) \geq \frac{1}{2}$ or $\frac{1}{\sqrt{m}} (m-1)(\mu + \lambda) \geq \frac{1}{2}$.\\
{\bf Case 2.1:} Suppose $\frac{1}{\sqrt{m}} (m-1)(\mu + \lambda ) \geq \frac{1}{2}$.
Therefore,
$$ z_{\sf Aff}({\cal U}) \geq \mb d^T \mb y (\mb e_1) = \theta+ \lambda + (m-1)(\mu+ \lambda) \geq \frac{\sqrt{m}}{2}. \; \; (\text{Contradiction with \eqref{contradic}})$$
where the last inequality holds because $\theta+ \lambda \geq 0 $ as $\mb y( \mb e_1) \geq \mb 0$.\\
{\bf Case 2.2:} Now suppose we have the other inequality i.e. $\theta+ \lambda +  \alpha \left( \frac{m-1}{\sqrt{m}} +1 \right) \geq \frac{1}{2}$. Recall that we have $ \lambda \leq \frac{1}{12\sqrt{m}}$ and we know that $ \alpha < \frac{1}{24 \sqrt{m}}$. Therefore,
$$ \theta \geq \frac{1}{2}- \frac{1}{12\sqrt{m}} -  \frac{1}{24\sqrt{m}}\left( \frac{m-1}{\sqrt{m}} +1 \right)   = \frac{11}{24} - \frac{3}{24 \sqrt{m}} + \frac{1}{24m}
 \geq \frac{11}{24} - \frac{3}{24 }= \frac{1}{3}.$$
We have,
$$ \mb y ( \mb \nu_1 )  =  \frac{1}{\sqrt{m}}     \left( ( \theta + (r-1) \mu ) ( \mb e_1+ \ldots \mb e_r)   + r \mu ( \mb e -( \mb e_1+ \ldots \mb e_r)) \right) + \lambda \mb e. $$
In particular we have ,
\begin{align} \label{eq:up} \nonumber
 z_{\sf Aff}({\cal U}) \geq \mb d^T \mb y (\mb \nu_1) &= \frac{r}{\sqrt{m}} (  \theta +(m-1) \mu) + \lambda m \\
 & \geq    \frac{r}{\sqrt{m}} \left(  \frac{1}{3} + (m-1) \mu \right).\\ \nonumber
\end{align}
where the last inequality follows from $\lambda \geq 0$ and $ \theta \geq \frac{1}{3}.$\\
{\bf Case 2.2.1:} If $ \mu \geq 0$ then from \eqref{eq:up}
$$ z_{\sf Aff}({\cal U}) \geq  \frac{r}{3\sqrt{m}} \geq  \frac{m-\sqrt{m}}{3\sqrt{m}} \geq \frac{\sqrt{m}}{6}    \; \;  \text{for }  m \geq 4  \; \;  (\text{Contradiction with \eqref{contradic}})$$\\
{\bf Case 2.2.2:} Now suppose that $ \mu < 0$, by non-negativity of $ \mb y ( \mb \nu_1) $ we have
$$ \frac{r}{\sqrt{m}} \mu + \lambda \geq 0$$
i.e. $$ \mu \geq \frac{-\lambda \sqrt{m}}{r}  $$
and from \eqref{eq:up}
\begin{align*}
 z_{\sf Aff}({\cal U})  &\geq    \frac{r}{\sqrt{m}} \left(  \frac{1}{3} + (m-1) \mu \right) \\
 & \geq  \frac{r}{\sqrt{m}}\left(  \frac{1}{3} - \lambda \sqrt{m}\frac{m-1}{r}   \right) \\
& \geq  \frac{r}{\sqrt{m}}\left(  \frac{1}{3} - \frac{1}{12} \frac{m-1}{r}  \right)  \geq   \frac{r}{\sqrt{m}}  \left(  \frac{1}{3} - \frac{1}{6}   \right)   \; \; \text{for } m \geq 4  . \\
& \geq  \frac{\sqrt{m}}{12} \; \; (\text{Contradiction with \eqref{contradic}})\\
\end{align*}
We conclude that $ \alpha \geq \frac{1}{24 \sqrt{m}}$ and consequently
 $$z_{\sf Aff}({\cal U}) \geq  \mb c^T \mb x = \frac{\alpha m}{15} \geq \frac{\sqrt{m}}{360} =  \Omega ( \sqrt{m}).$$
 Hence,  $$z_{\sf Aff}({\cal U})= \Omega ( \sqrt{m}) \cdot  z_{\sf AR}({\cal U}).$$
 $\mb c^T \mb x= \mb c^T \mb x^*_{\sf Aff}$
Moreover, for any optimal affine solution, the cost of the first-stage affine solution  $\mb x^*_{\sf Aff}$ is $ \Omega ({\sqrt{m}})$ away from the optimal adjustable problem ~\eqref{eqn:ar}, i.e. $  \mb c^T \mb x^*_{\sf Aff} =\mb c^T \mb x =\Omega ( \sqrt{m})\cdot z_{\sf AR}({\cal U})$.
  \end{proof}
}
\section{Proof of Theorem \ref{thm:worstcase}}\label{apx-proofs:worstcase}

\begin{proof}

Let us find the order of the left hand side ratio in inequality \eqref{contr}. We have,
\begin{align*}
\frac{\binom{\sqrt{m}}{m^{\epsilon}} \cdot \binom{m-m^{\epsilon}}{\sqrt{m}-m^{\epsilon}} }{\binom{m}{\sqrt{m}}}&=
\frac{  (\sqrt{m})!    \times  (m-{m^{\epsilon}})! \times (m-\sqrt{m})!  \times   (\sqrt{m})!   }{ (    \sqrt{m} -   m^{\epsilon} )!\times (      m^{\epsilon} )! \times    m! \times (    \sqrt{m} -   m^{\epsilon} )! \times(m-\sqrt{m})!    } \\
&= \left(\frac{  (\sqrt{m})!   }{ (    \sqrt{m} -   m^{\epsilon} )! } \right)^2 \cdot \frac{     (m-{m^{\epsilon}})! \  }{ (      m^{\epsilon} )! \times    m!   } .\\
\end{align*}
By Stirling's approximation, we have
\begin{align*}
 \left(\sqrt{m}\right)! &= \Theta \left(   m^{\frac{1}{4}}  \left(\frac{\sqrt{m}}{e}   \right)^{\sqrt{m}}                \right). \\
 \left(\sqrt{m}-m^{\epsilon}\right)! &= \Theta \left(  ( \sqrt{m}-m^{\epsilon})^{\frac{1}{2}}  \left(\frac{\sqrt{m}-m^{\epsilon}}{e}   \right)^{\sqrt{m}-m^{\epsilon}}                \right). \\
 \left(m-m^{\epsilon}\right)! &= \Theta \left(  ( m-m^{\epsilon})^{\frac{1}{2}}  \left(\frac{m-m^{\epsilon}}{e}   \right)^{m-m^{\epsilon}}                \right). \\
\left(m\right)! &= \Theta \left(   m^{\frac{1}{2}}  \left(\frac{m}{e}   \right)^{m}                \right) .\\
\left(m^{\epsilon}\right)! &= \Theta \left(   m^{\frac{1}{2} \epsilon }\left(\frac{m^{\epsilon}}{e}   \right)^{m^{\epsilon}}                \right) .
\end{align*}
All together,
$$\frac{\binom{\sqrt{m}}{m^{\epsilon}} \cdot \binom{m-m^{\epsilon}}{\sqrt{m}-m^{\epsilon}} }{\binom{m}{\sqrt{m}}}= \Theta \left( \frac {   \left( \sqrt{m} \right)^{2 \sqrt{m}} \cdot  \left( m-m^{\epsilon}    \right)^{ \left(m-m^{\epsilon}\right)}  }{  m^{\frac{1}{2} \epsilon }  \cdot
\left( \sqrt{m}-m^{\epsilon}    \right)^{2 \left(\sqrt{m}-m^{\epsilon}\right)}
 \cdot   m^m \cdot m^{ \epsilon m^{\epsilon}}
} \right). $$
We have
$$ \left( m-m^{\epsilon}    \right)^{ \left(m-m^{\epsilon}\right)} = \Theta \left(  m ^{ \left(m-m^{\epsilon}\right)}  \cdot e^{-m^{\epsilon}+ \frac{m^{2\epsilon}}{m}} \right),$$ and
$$ \left( \sqrt{m}-m^{\epsilon}    \right)^{ 2 \left(\sqrt{m}-m^{\epsilon}\right)} =   \Theta \left( \left(\sqrt{m} \right)^{2 \left(\sqrt{m}-m^{\epsilon}\right)}  \cdot e^{-  2m^{\epsilon}+ 2 \frac{m^{2\epsilon}}{\sqrt{m}}}\right),$$
WLOG, we can suppose that $ \epsilon < \frac{1}{4}$, therefore
\begin{align*}
\frac{\binom{\sqrt{m}}{m^{\epsilon}} \cdot \binom{m-m^{\epsilon}}{\sqrt{m}-m^{\epsilon}} }{\binom{m}{\sqrt{m}}} &= \Theta \left( \frac {
e^{ m^{\epsilon} - 2 \frac{m^{2\epsilon}}{\sqrt{m}} + \frac{m^{2\epsilon}}{m}   } }{ m^{ \epsilon m^{\epsilon} +\frac{1}{2}\epsilon}
} \right) \\
&=\Theta \left( \frac {
e^{ m^{\epsilon}    } }{ m^{ \epsilon m^{\epsilon} +\frac{1}{2}\epsilon}
} \right). \\
\end{align*}

We have,
$$\Theta \left( \frac {    Q(m)e^{ m^{\epsilon}    } }{ m^{ \epsilon m^{\epsilon} +\frac{1}{2}\epsilon}} \right) \geq 1, $$
but the later inequality contradicts $$ \lim_{m\to\infty} \frac {    Q(m)e^{ m^{\epsilon}    } }{ m^{ \epsilon m^{\epsilon} +\frac{1}{2}\epsilon}} = 0.$$
 \end{proof}

\red{
\section{Domination for non-permutation invariant sets}\label{apx-:prop:inters}
\begin{prop} \label{prop:inters}
Suppose Algorithm \ref{algo2} returns $\beta$ and $\mb v$ for some uncertainty set ${\cal U}$. Then the set \eqref{newdom} is a dominating set for $\cal U$.
\end{prop}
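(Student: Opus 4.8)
The plan is to reduce the statement to the single inequality that Algorithm~\ref{algo2} certifies upon termination. First I would recall, via Theorem~\ref{thm:bound:beta} and the stopping rule of the algorithm, that the returned pair $(\beta,\mb v)$ satisfies $\mb v\in{\cal U}$ and that $\beta=t$, $\mb v=\mb u^t/\beta$ with the loop condition failing, i.e.
$$\frac{1}{\beta}\sum_{i=1}^m\left(h_i-\beta v_i\right)^{+}\;\le\;1,\qquad\forall\,\mb h\in{\cal U}.$$
This is precisely condition \eqref{eq:gn:ine}; the only point deserving an explicit sentence is that the terminating loop condition $\max_{\mb h\in{\cal U}}\sum_i(h_i-u_i^t)^+\le t$ is literally this inequality after substituting $u_i^t=\beta v_i$.

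Next I would fix an arbitrary $\mb h\in{\cal U}$ and define convex-combination weights $\lambda_i:=\frac{1}{\beta}\left(h_i-\beta v_i\right)^{+}$ for $i\in[m]$ and $\lambda_0:=1-\sum_{i=1}^m\lambda_i$. The displayed inequality together with nonnegativity of the positive parts gives $\lambda_0,\lambda_1,\dots,\lambda_m\ge 0$ and $\sum_{i=0}^m\lambda_i=1$. I would then set
$$\hat{\mb h}\;:=\;\beta\left(\lambda_0\,\mb v+\sum_{i=1}^m\lambda_i\,(\mb e_i+\mb v)\right),$$
which lies in $\hat{\cal U}$ as defined in \eqref{newdom} by construction, and simplify the right-hand side, using $\lambda_0+\sum_i\lambda_i=1$, to
$$\hat{\mb h}\;=\;\beta\,\mb v+\sum_{i=1}^m\beta\lambda_i\,\mb e_i\;=\;\beta\,\mb v+\sum_{i=1}^m\left(h_i-\beta v_i\right)^{+}\mb e_i.$$
Finally, componentwise, $\hat h_i=\beta v_i+\left(h_i-\beta v_i\right)^{+}\ge\beta v_i+\left(h_i-\beta v_i\right)=h_i$, so $\hat{\mb h}$ dominates $\mb h$. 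Since $\mb h\in{\cal U}$ was arbitrary, $\hat{\cal U}$ dominates ${\cal U}$, as claimed.

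I do not anticipate a real obstacle: the argument is the same bookkeeping that appears in Lemma~\ref{lem:2U} and in the feasibility part of the proof of Theorem~\ref{thm:cost}, now carried out for the shifted simplex \eqref{newdom} instead of $\beta\cdot{\sf conv}(\mb e_1,\dots,\mb e_m,\mb v)$; the translation by $\mb v$ is exactly what absorbs the "$\beta\mb v$" static part into every vertex, which is why one gets a genuine dominating set directly rather than needing the factor $2$ as in Lemma~\ref{lem:ineq}b. The mildest care point is simply matching the notation of Algorithm~\ref{algo2} (the variables $t$, $\mb u^t$) to the pair $(\beta,\mb v)$ so that \eqref{eq:gn:ine} is available.
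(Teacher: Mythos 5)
Your proposal is correct and follows essentially the same route as the paper's proof: both use the terminating condition of Algorithm~\ref{algo2} to obtain inequality \eqref{eq:gn:ine}, then express the dominating point $\beta\mb v+(\mb h-\beta\mb v)_+$ as a convex combination of $\beta\mb v$ and the vertices $\beta(\mb e_i+\mb v)$ with weights $\frac{1}{\beta}(h_i-\beta v_i)^+$. Your write-up is slightly more explicit about the weights and the componentwise domination check, but the argument is the same.
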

\begin{proof}
Suppose Algorithm \ref{algo2} returns $\beta$ and $\mb v$,then the inequality \eqref{ineq:beta-v} is verified, namely,
$$\frac{1}{\beta} \sum_{i=1}^m \left( h_i -\beta v_i  \right)^{+} \leq 1, \; \forall \mb h \in {\cal U}.$$
Recall the dominating point \eqref{eq:mapping}
$$\mb{ \hat{h}}(\mb h) = \beta \mb v + ( \mb h - \beta \mb v )_+.$$
We have
$$ \mb{ \hat{h}}(\mb h) = \beta \left(    \sum_{i=1}^m \frac{( h_i -\beta v_i )^{+} }{\beta} ( \mb e_i + \mb v )  + \underbrace{\left( 1- \sum_{i=1}^m \frac{( h_i -\beta v_i )^{+} }{\beta}  \right) }_{ \geq 0} \mb v  \right)  \in {\hat{\cal U}}  $$
where
$${\hat{\cal U}} = \beta \cdot {\sf conv} \left( \mb v, \mb e_1 + \mb v, \ldots, \mb e_m + \mb v \right)$$
Hence ${\hat{\cal U}}$ is a dominating set.
\end{proof}
}

\red{
\section{Domination for the generalized budget set}\label{apx-prop:poly}
\begin{prop} \label{prop:poly}
Let consider
\begin{equation}\label{domset:poly}
{\hat{\cal U}}= {\sf conv} \left( \mb e_1, \ldots, \mb e_m, \frac{1}{m-1-2\theta} \mb e \right)
\end{equation}
The set \eqref{domset:poly} dominates the uncertainty set \eqref{set:polytope}.
\end{prop}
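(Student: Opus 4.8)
The plan is to verify the domination relation of Definition~\ref{Domination} directly. Fix an arbitrary $\mb h\in{\cal U}$; since both ${\cal U}$ and $\hat{\cal U}$ in \eqref{domset:poly} are permutation invariant, I may relabel the coordinates so that $h_1\le h_2\le\dots\le h_m$, and it then suffices to exhibit a point of $\hat{\cal U}$ dominating $\mb h$ in this sorted frame. In that frame the only constraint of \eqref{set:polytope} I will use is the one for the pair $(i,j)=(1,2)$, namely $\sum_{\ell=1}^m h_\ell\le 1+\theta(h_1+h_2)$. Throughout I use $m-1-2\theta>0$, which is exactly what makes the point $\tfrac{1}{m-1-2\theta}\mb e$ lie in ${\mathbb R}^m_+$ in the first place.

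First I would pick the threshold $c:=h_2$ and set $\alpha_i:=(h_i-c)^+$ for $i\in[m]$ (so $\alpha_1=\alpha_2=0$) and $\alpha_{m+1}:=1-\sum_{i=1}^m\alpha_i$. The heart of the argument is the inequality
\[
\sum_{i=1}^m (h_i-c)^+\;\le\;1-(m-1-2\theta)\,c ,
\]
which simultaneously guarantees $\alpha_{m+1}\ge (m-1-2\theta)c\ge 0$ (so the $\alpha_i$'s form a valid convex combination) and leaves, after dividing the residual mass by $m-1-2\theta$, just enough to raise the small coordinates up to $c$. Using the identity $\sum_{i=1}^m(h_i-c)^+=\sum_{\ell=1}^m h_\ell-h_1-(m-1)h_2$, this inequality is equivalent to $\sum_{\ell}h_\ell-h_1\le 1+2\theta h_2$, which by the constraint above reduces to $(\theta-1)h_1\le\theta h_2$; that in turn is immediate in the two cases $\theta\ge 1$ (use $h_1\le h_2$ and $\theta-1\le\theta$) and $0\le\theta<1$ (the left-hand side is nonpositive). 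I expect this short computation to be the only real obstacle.

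Finally I would set $\hat{\mb h}:=\sum_{i=1}^m\alpha_i\mb e_i+\alpha_{m+1}\cdot\tfrac{1}{m-1-2\theta}\mb e\in\hat{\cal U}$ and check $\hat{\mb h}\ge\mb h$ coordinatewise: for an index $i$ with $h_i>c$ one has $\hat h_i=(h_i-c)+\tfrac{\alpha_{m+1}}{m-1-2\theta}\ge (h_i-c)+c=h_i$ because $\alpha_{m+1}\ge(m-1-2\theta)c$, while for $i$ with $h_i\le c$ one has $\hat h_i=\tfrac{\alpha_{m+1}}{m-1-2\theta}\ge c\ge h_i$. Since $\mb h\in{\cal U}$ was arbitrary, this shows $\hat{\cal U}$ dominates ${\cal U}$; note that the dominating point is built inside $\hat{\cal U}$ itself, so no extra scaling factor is incurred here.
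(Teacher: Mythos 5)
Your proof is correct and follows essentially the same route as the paper's: sort the coordinates, invoke only the budget constraint for the pair of smallest components, and exhibit the dominating point as a convex combination that uses the vertex $\frac{1}{m-1-2\theta}\mb e$ to cover everything up to a threshold and the unit vectors $\mb e_i$ to cover the excess. The only cosmetic differences are your threshold $h_2$ versus the paper's $\tfrac{h_1+h_2}{2}$, and that your weights sum to exactly $1$ rather than at most $1$; neither changes anything substantive.
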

\begin{proof}
Consider the uncertainty set \eqref{set:polytope} given by
$${\cal U}= \left\{ \mb h \in [0,1]^m \; \Bigg\vert \; \sum_{i=1}^m h_i \leq 1 + \theta ( h_i +h_j) \; \; \forall i \neq j \right\}$$
and
$${\hat{\cal U}}= {\sf conv} \left( \mb e_1, \ldots, \mb e_m, \frac{1}{m-1-2\theta} \mb e \right).$$
Note that in our setting we choose $ \theta > \frac{m-1}{2}$.
Take any $\mb h \in {\cal U}$. Suppose WLOG that
$$ h_1 \leq h_2 \leq \ldots \leq h_m $$
Hence, by definition of ${\cal U}$
$$ \mb e^T \mb h \leq 1+ \theta ( h_1+h_2) $$
To prove that $\hat{ \cal U}$ dominates ${ \cal U}$, it is sufficient to find $ \alpha_1, \alpha_2,\ldots,\alpha_{m+1}$ non-negative reals with $\sum_{i=1}^{m+1} \alpha_i \leq 1$ such that for all $ i \in [m],$ $$ \; h_i \leq  \alpha_i + \frac{1}{m-1-2\theta}\alpha_{m+1}.$$
We choose $\alpha_{m+1} = (m-1-2\theta) \cdot \frac{h_1+h_2}{2} $, $\alpha_1=h_1$ and   for $i \geq 2$, $\alpha_i = h_i -  \frac{h_1+h_2}{2}$.
We can verify that
$$ \alpha_1+  \frac{1}{m-1-2\theta} \alpha_{m+1}  \geq  \alpha_1 =h_1 $$
and for $ i \geq 2$,
$$ \alpha_i +  \frac{1}{m-1-2\theta} \alpha_{m+1}  =h_i $$
Moreover, $\alpha_{m+1} \geq 0$, $\alpha_1 \geq 0$ and for $i \geq 2$, $\alpha_i \geq 0$ since $h_1+h_2= min_{i\neq j} (h_i+h_j)$. Finally,
\begin{align*}
\sum_{i=1}^{m+1} \alpha_i &= \sum_{i=1}^{m} h_i - (m-1)\cdot \frac{h_1+h_2}{2}+ (m-1-2\theta) \cdot \frac{h_1+h_2}{2}\\
& \leq 1+ \theta ( h_1+h_2) - (m-1)\cdot \frac{h_1+h_2}{2}+ (m-1-2\theta) \cdot \frac{h_1+h_2}{2} =1.
\end{align*}
Note that the construction of this dominating set is slightly different from the general approach in Section 3 since we do not scale the unit vectors $\mb e_i$ in ${\hat{\cal U}}$.
\end{proof}
}

\end{appendix}

\end{document}